\theoremstyle{plain}
\newtheorem{thm}{Theorem}[section]
\newtheorem{prop}[thm]{Proposition}
\newtheorem{cor}[thm]{Corollary}
\newtheorem{lemma}[thm]{Lemma}
\theoremstyle{definition}
\newtheorem{ex}[thm]{Example}
\theoremstyle{remark}
\newtheorem{rmk}[thm]{Remark}
\numberwithin{equation}{section}
\numberwithin{figure}{section}
\newcommand{\abs}[1]{\lvert{#1}\rvert}    
\newcommand{\normsymb}{\|}
\newcommand{\norm}[2]{\normsymb{#1}\normsymb_{#2}}  
\newcommand{\RR}{\mathbb{R}} 
\newcommand{\CC}{\mathbb{C}} 
\newcommand{\NN}{\mathbb{N}} 
\newcommand{\ZZ}{\mathbb{Z}} 
\newcommand{\cE}{{\mathcal E}}
\newcommand{\cH}{{\mathcal H}}
\newcommand{\cM}{{\mathcal M}}
\newcommand{\cR}{{\mathcal R}}
\newcommand{\cT}{{\mathcal T}}
\newcommand{\fh}{{\mathfrak h}}
\newcommand{\fv}{{\mathfrak v}}
\DeclareMathOperator{\dd}{d\!}  
\newcommand{\ee}{\mathrm e}     
\newcommand{\EE}{\mathsf{E}}    
\DeclareMathOperator{\Dom}{{\mathcal D}} 
\DeclareMathOperator{\Ran}{\mathrm{Ran}} 
\renewcommand{\epsilon}{\varepsilon}
\DeclareMathOperator{\supp}{supp}
\DeclareMathOperator{\diam}{diam}
\DeclareMathOperator{\diag}{diag}
\DeclareMathOperator{\Span}{span}
\newcommand{\hm}[1]{\leavevmode{\marginpar{\tiny%
 $ \hbox to 0mm{\hspace*{-0.5mm} $ \leftarrow $ \hss}%
 \vcenter{\vrule depth 0.1mm height 0.1mm width \the\marginparwidth}%
 \hbox to
 0mm{\hss $ \rightarrow $ \hspace*{-0.5mm}} $ \\\relax\raggedright #1}}}
\newcommand{\bijection}{\Psi}
\begin{document}
\title[An abstract Logvinenko-Sereda type theorem]{An abstract Logvinenko-Sereda type theorem for spectral subspaces}
\subjclass[2010]{Primary 35B99; Secondary 26D10, 35P05, 35Qxx, 47F05}
\keywords{Uncertainty principle, unique continuation, spectral inequality, Logvinenko-Sereda theorems, Bernstein inequality,
thick set}
\author[M.~Egidi]{Michela Egidi}
\address[M.~Egidi]{Ruhr Universit\"at Bochum, Fakult\"at f\"ur Mathematik, D-44780 Bochum, Germany}
\email{michela.egidi@ruhr-uni-bochum.de}

\author[A.~Seelmann]{Albrecht Seelmann}
\address[A.~Seelmann]{Technische Universit\"at Dortmund, Fakult\"at f\"ur Mathematik, D-44221 Dortmund, Germany}
\email{albrecht.seelmann@math.tu-dortmund.de}

\thanks{\copyright 2021. This manuscript version is made available under the CC-BY-NC-ND 4.0 license
\texttt{http://creativecommons.org/licenses/by-nc-nd/4.0/}. The final authenticated version is available online at
\texttt{https://doi.org/10.1016/j.jmaa.2021.125149}}

\begin{abstract}
	We provide an abstract framework for a Logvinenko-Sereda type theorem, where the classical compactness assumption on the support
	of the Fourier transform is replaced by the assumption that the functions under consideration belong to a spectral subspace
	associated with a finite energy interval for	some lower semibounded self-adjoint operator on a Euclidean $L^2$-space. Our result
	then provides a bound for the $L^2$-norm of such functions in terms of their $L^2$-norm on a thick subset with a constant
	explicit in the geometric and spectral parameters.	This recovers previous results for functions on the whole space,
	hyperrectangles, and infinite strips with compact Fourier support and for finite linear combinations of Hermite functions and
	allows to extend them to other domains.
	
	The proof follows the approach by Kovrijkine and is based on Bernstein-type inequalities for the respective functions,
	complemented with a suitable covering of the underlying domain.
\end{abstract}

\maketitle

\section{Introduction and main results}

The classical uncertainty principle of Heisenberg states that it is impossible to precisely determine the position and the
momentum of a particle at the same time. A typical manifestation of this principle is the fact that a function and its Fourier
transform cannot be localized simultaneously. Over the years, this has been expressed in different ways and has taken different
mathematical formulations. For an exhaustive presentation, we refer the reader to the monograph~\cite{havin-joericke:94}. 

One of these formulations is connected to the definition of so-called~\emph{strongly annihilating pairs}:
For $\mathcal{S}, \mathcal{E} \subset\RR^d$, the pair ($\mathcal{S}, \mathcal{E})$ is called strongly annihilating if there
exists a constant $C=C(\mathcal{S}, \mathcal{E})$ such that 
\begin{equation}\label{eq:annihilating}
 \norm{f}{L^2(\RR^d)}^2
 \leq
 C (\norm{f}{L^2(\RR^d\setminus\mathcal{S})}^2 + \norm{\hat{f}}{L^2(\RR^d\setminus\mathcal{E})}^2)
 \qquad \forall\, f\in L^2(\RR^d)
 ,
\end{equation}
where $\hat{\cdot}$ denotes the Fourier transform.

The Amrein-Berthier theorem~\cite{amrein-berthier:77} states that if $\mathcal{S}$ and $\mathcal{E}$ both have finite Lebesgue
measure, then the pair $(\mathcal{S},\mathcal{E})$ is strongly annihilating. Nazarov~\cite{nazarov:92} provided in dimension
$d = 1$ the explicit constant $C = c \ee^{c\abs{\mathcal{S}}\abs{\mathcal{E}}}$, which is optimal up to the universal constant
$c>0$. Later, Jaming~\cite{jaming:07} extended this result to higher dimensions with
$C = c_d \ee^{c_d\min\{\abs{\mathcal{S}}\abs{\mathcal{E}},\abs{\mathcal{S}}^{1/d}w(\mathcal{E}),
w(\mathcal{S})\abs{\mathcal{E}}^{1/d}\}}$, where $w(\cdot)$ denotes the mean width of the respective set.
A complete classification of all strongly annihilating pairs seems currently out of reach. However, it is known that the pair
$(\mathcal{S},\mathcal{E})$ is strongly annihilating if and only if
\begin{equation}\label{eq:spectral-fourier}
 \norm{f}{L^2(\RR^d)}^2
 \leq
 \tilde {C}(\mathcal{S}, \mathcal{E}) \norm{f}{L^2(\mathcal{S})}
 \qquad \forall \, f\in L^2(\RR^d) \text{ with }\supp \hat f\subset \mathcal{E},
\end{equation}
see, e.g.,~\cite[Section~3.\S{}1.1]{havin-joericke:94}, which is another and more popular form of an uncertainty relation. Under
the additional assumption that $\mathcal{E}$ is bounded and in the context of finding all sets $\mathcal{S}$ for which the
seminorm $\norm{\cdot}{L^2(\mathcal{S})}$ is a norm equivalent to $\norm{\cdot}{L^2(\RR^d)}$, this has been studied by
Panejah~\cite{panejah:61,panejah:62}, Kacnel'son~\cite{kacnelson:73}, and Logvinenko-Sereda~\cite{Logvinenko-Sereda-74}, whose
results can be summarised as follows.

\begin{thm}[Panejah, Kacnel'son, Logvinenko-Sereda]\label{thm:PKLS}
	Let $\mathcal{S}\subset\RR^d$ be measurable and $r>0$. Then, the following statements are equivalent: 
	\begin{enumerate}[(i)]
		\item
		There exists a constant $C=C(\mathcal{S}, r)>0$ such that inequality~\eqref{eq:spectral-fourier} holds for all	functions
		$f\in L^2(\RR^d)$ with $\supp\hat f\subset B(x, r)$, $x\in\RR^d$;

		\item
		The set $\mathcal{S}$ is thick, that is, there exist parameters $\gamma\in(0,1]$ and $\rho > 0$ such that
		$\abs{\mathcal{S}\cap (x + (0,\rho)^d)} \geq \gamma\rho^d$ for all $x\in\RR^d$.
	\end{enumerate}
\end{thm}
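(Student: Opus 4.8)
The equivalence comprises two implications of quite different character: (ii)$\Rightarrow$(i) is the quantitative Logvinenko--Sereda inequality and carries all of the analytic weight, whereas (i)$\Rightarrow$(ii) is a soft necessity argument. The plan is to treat them separately.

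For (i)$\Rightarrow$(ii) I would argue by contraposition. Since replacing $f$ by $\ee^{\ii\langle\xi_0,\cdot\rangle}f$ translates $\supp\hat f$ by $\xi_0$ and changes neither side of~\eqref{eq:spectral-fourier}, one may assume throughout that the admissible band is the ball $B(0,r)$. Suppose $\mathcal{S}$ is not thick and let $C$ be the constant from~(i). Choose a nonzero Schwartz function $\psi$ with $\hat\psi$ compactly supported and with the fraction of its $L^2$-mass lying outside the unit cube $(-1/2,1/2)^d$ below $1/(2C^2)$; such a $\psi$ exists because, starting from an arbitrary band-limited Schwartz function $\psi_0$, the dilates $\psi_0(\cdot/L)$ concentrate near the origin in the $L^2$-normalised sense as $L\downarrow 0$, at the sole cost of enlarging $\supp\hat\psi$ to a fixed ball. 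Put $\gamma:=\min\bigl\{1,\ \norm{\psi}{L^2(\RR^d)}^2\big/(4C^2\norm{\psi}{L^\infty(\RR^d)}^2)\bigr\}$. Non-thickness then provides, for a side length $\rho$ large enough that $\rho^{-1}\supp\hat\psi\subset B(0,r)$, a cube $Q=x_0+(0,\rho)^d$ with $\abs{\mathcal{S}\cap Q}<\gamma\rho^d$; let $c$ be its centre and set $f(x):=\psi\bigl((x-c)/\rho\bigr)$. Then $\supp\hat f\subset B(0,r)$, while the scaling identities $\norm{f}{L^2(\RR^d)}^2=\rho^d\norm{\psi}{L^2(\RR^d)}^2$, $\norm{f}{L^2(\mathcal{S}\cap Q)}^2\le\norm{\psi}{L^\infty(\RR^d)}^2\abs{\mathcal{S}\cap Q}$, and $\norm{f}{L^2(\RR^d\setminus Q)}^2=\rho^d\norm{\psi}{L^2(\RR^d\setminus(-1/2,1/2)^d)}^2$ combine with the choices of $\gamma$ and $\psi$ to give $\norm{f}{L^2(\mathcal{S})}^2<C^{-2}\norm{f}{L^2(\RR^d)}^2$, contradicting~\eqref{eq:spectral-fourier}. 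Hence $\mathcal{S}$ must be thick.

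For (ii)$\Rightarrow$(i) I would follow Kovrijkine's approach. After the same modulation, assume $\supp\hat f\subset B(0,r)$; then $f$ is the restriction of an entire function, and Plancherel gives the Bernstein inequalities $\norm{\partial^\alpha f}{L^2(\RR^d)}\le r^{\abs{\alpha}}\norm{f}{L^2(\RR^d)}$ for every multi-index $\alpha$. Let $\gamma,\rho$ be thickness parameters for $\mathcal{S}$ and tile $\RR^d$ by the cubes $Q_j=\rho(j+(0,1)^d)$, $j\in\ZZ^d$, so $\abs{\mathcal{S}\cap Q_j}\ge\gamma\rho^d$ for all $j$. Fix weights $w_\alpha>0$ with $S:=\sum_\alpha w_\alpha r^{2\abs{\alpha}}<\infty$, and call $Q_j$ \emph{bad} if $\sum_\alpha w_\alpha\norm{\partial^\alpha f}{L^2(Q_j)}^2>2S\norm{f}{L^2(Q_j)}^2$. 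Summing this inequality over all bad cubes and inserting the global Bernstein bounds gives $\sum_{Q_j\text{ bad}}\norm{f}{L^2(Q_j)}^2\le\tfrac12\norm{f}{L^2(\RR^d)}^2$: the bad cubes together carry at most half the $L^2$-mass of $f$. On a good cube the reverse inequality, combined with the Bernstein bounds and a Sobolev embedding on $Q_j$, controls the Taylor coefficients of $f$ at the centre of $Q_j$ and thereby places $f|_{Q_j}$ within the scope of a local Remez/Tur\'an-type inequality; applied with the subset $\mathcal{S}\cap Q_j$, of relative measure at least $\gamma$, the latter yields $\norm{f}{L^2(Q_j)}^2\le(C_1/\gamma)^{C_2(r\rho+1)}\norm{f}{L^2(\mathcal{S}\cap Q_j)}^2$ with $C_1,C_2$ depending only on $d$. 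Summing over the good cubes, adding the bad-cube estimate, absorbing the half-mass term into the left-hand side, and using that the $Q_j$ tile $\RR^d$ gives $\norm{f}{L^2(\RR^d)}^2\le 2(C_1/\gamma)^{C_2(r\rho+1)}\norm{f}{L^2(\mathcal{S})}^2$, which is the estimate in~(i).

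The soft direction is essentially routine once the test function $\psi$ has been chosen; the only delicate point there is that $\psi$ can be made band-limited \emph{and} well concentrated at the same time, which the dilation trick settles. The genuine obstacle is the local Remez/Tur\'an-type inequality on a cube for band-limited functions, and in particular the requirement that its constant depend on the relative measure $\gamma$ only polynomially, with an exponent that grows at most linearly in $r\rho$. Establishing this in dimension $d$ and in the $L^2$ (rather than $L^\infty$) formulation, and dovetailing it with the bookkeeping that makes the bad cubes negligible, is where essentially all of the work resides.
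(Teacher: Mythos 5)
The paper never proves Theorem~\ref{thm:PKLS}; it cites it as classical, and the only part of it the paper's own machinery reproduces is the hard implication (ii)$\Rightarrow$(i), which is the special case of Theorem~\ref{thm:LS} and Proposition~\ref{prop:LS-abstract-version} for the pure Laplacian on $\RR^d$ with $\lambda = r^2$ (Corollary~\ref{cor:pureLaplacian}). Your outline of that implication follows exactly this route: Plancherel--Bernstein bounds, a good/bad dichotomy for the tiling in which the bad cubes carry at most half of the $L^2$-mass, and a local estimate on each good cube with constant $(C_1/\gamma)^{C_2(r\rho+1)}$; the bookkeeping you describe (choice of weights, summation over bad cubes, absorption of the half-mass term) is correct and matches Section~\ref{subsec:good_and_bad}. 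Your soft direction (i)$\Rightarrow$(ii), via a dilated band-limited bump placed on a cube where $\mathcal{S}$ has small relative measure, is sound (modulo the harmless normalisation $C\ge 1$) and does not appear in the paper at all.

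The one genuine gap is the step you yourself flag: the local ``Remez/Tur\'an-type inequality'' on a cube for band-limited functions of $d$ variables, with polynomial dependence on $\gamma$ and an exponent linear in $r\rho$, is not an off-the-shelf fact, and your proposal does not reduce it to one --- as written, the crux of the theorem is invoked rather than proved. In the paper this is the content of Lemmas~\ref{lem:kovrijkine_orig} and~\ref{lem:local_estimate}, obtained in three stages: the good-cube Bernstein bounds are turned, via an averaging argument producing a point $x_0 \in Q_j$ with controlled Taylor coefficients (the paper does not use the centre; the Sobolev-embedding variant you mention is the route of~\cite{BJPS18} and requires tracking the $\rho$-dependence of the embedding constant), into an analytic extension $F$ of $f|_{Q_j}$ to the complex polydisk $Q_j + D_{4l}$ with $\sup\abs{F} \le M \norm{f}{L^2(Q_j)}/\sqrt{\abs{Q_j}}$ and $\log M \lesssim d r\rho$; an integral-geometric dimension reduction (spherical coordinates around a point where $\abs{f}$ is large) selects a line segment $I \subset Q_j$ along which a sublevel set of $\abs{f}$ retains enough relative measure; and only then does the one-dimensional Remez-type estimate for functions analytic on a disk (Kovrijkine's Lemma~1) apply. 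Until that reduction is carried out, the multidimensional local estimate is precisely what remains to be proved, so the proposal is a correct and well-organised skeleton rather than a complete proof.
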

Kacnel'son and Logvinenko-Sereda also provide the constant $C= c_1e^{c_2 r}$ with $c_1, c_2 > 0$ depending only on $\gamma$ and
$\rho$. However, the optimal behaviour has been discovered much later by Kovrijkine~\cite{Kovrijkine:01, Kovrijkine:thesis}, who
gives the explicit form
\[
 C=\Big(\frac{K^d}{\gamma}\Big)^{Krd\rho + d}
\]
with a universal constant $K>0$.

It is worth to note that Theorem~\ref{thm:PKLS} was originally formulated for functions in $L^p(\RR^d)$, $p\in (0,\infty)$, and
that the work of Kovrijkine is valid for all $p\in [1,\infty]$ and has nowadays found a broad application in the realm of control
theory, see, e.g.,~\cite{EV18,martin-pravdastarov:20,GST20,BGST20}.

Recently, statements analogous to Theorem~\ref{thm:PKLS} for $L^2$-functions defined on a hyperrectangle with finite Fourier
series, and for $L^2$-functions on an infinite strip with finite Fourier series in the bounded coordinates and compactly
supported Fourier transform in the unbounded coordinates, have been proved in~\cite{EV20,Egi21}, respectively, following the
ideas of Kovrijkine in order to obtain the best possible constant.

If one aims for related statements in other situations, it is natural to replace the assumption of compact Fourier support by
other restrictions. Following Kovrijkine's approach, this has been done in terms of the Fourier-Bessel (or Hankel) transform
in~\cite{ghobber-jaming:13} and in terms of model spaces in~\cite{HJK20}.
In the present work, we consider in the Euclidean
setting functions belonging to a spectral subspace for some self-adjoint operator associated with a finite energy interval. More
precisely, given some domain $\Omega\subset\RR^d$, a lower semibounded self-adjoint operator $H$ on $L^2(\Omega)$, a measurable
subset $\omega\subset\Omega$,
and $\lambda \in \RR$, one asks whether there exists a constant $C>0$ such that 
\begin{equation}\label{eq:spectral-abstract}
 \norm{f}{L^2(\Omega)}^2
 \leq
 C \norm{f}{L^2(\omega)}^2
 \quad \forall f\in \Ran\EE_{H}(\lambda)
 ,
\end{equation}
where $\EE_{H}(\lambda)$ denotes the spectral projection for $H$ up to energy value $\lambda$ and the constant $C$ depends on
$H, \Omega, \omega$ and $\lambda$.
An inequality of the above form is usually called a~\emph{spectral inequality} in the context of control theory
and~\emph{unique continuation principle} in the theory of (random) Schr\"odinger operators. We refer the reader
to~\cite{ENSTTV20,NakicTTV-COVC} and~\cite{Klein13,RMV13,NTTV18,ST20}, respectively, and the works cited therein for an overview
of the different uses of~\eqref{eq:spectral-abstract} in both these fields.

In order to be useful in the context of control theory, the constant $C$ in~\eqref{eq:spectral-abstract} should have the form
\begin{equation}\label{eq:spectralIneqControl}
 C = d_1 \ee^{d_2\max\{\lambda,0\}^s},
 \quad s\in(0,1).
\end{equation}
If $H$ is the Dirichlet Laplacian on a bounded domain and $\omega$ is chosen to be an open subset, this has been established with
$s=1/2$ by Lebeau and Robbiano in~\cite{lebeau-robbiano-95}. Under additional assumptions on $\Omega$, it is also possible to
choose $\omega$ to be just measurable with positive measure as the work~\cite{ApraizEWZ-14} shows. However, if one wants to
discuss certain asymptotic regimes as was done in~\cite{NakicTTV-COVC}, one desires a more explicit knowledge on the dependence
of $d_1$ and $d_2$ on certain geometric parameters of $\omega$. If $\omega$ is a so-called~\emph{equidistributed set}, that is,
$\omega$ contains a union of suitably distributed open balls of fixed radius, this was achieved for Schr\"odinger operators with
bounded potentials on hyperrectangles in~\cite{NTTV18}, and later also on arbitrary rectangular shaped domains
in~\cite{NakicTTV-JST}, with the use of Carleman estimates; for earlier works in this direction see also the references cited
in both these papers. A related quantitative result for certain unbounded potentials can be found in~\cite{KT16}.
However, if $\omega$ is just assumed to be a thick set, so far other techniques based on complex analysis are required. In this
regard, Schr\"odinger operators on $\RR^d$ with certain analytic potentials have been considered in~\cite{lebeau-moyano-19},
which is closely related to the work by Kacnel'son but does not provide an explicit form for the constants $d_1$ and $d_2$. Such
explicit results for thick sets are, as of now, known only for results based on Kovrijkine's approach. Here, his works handle the
case of the pure Laplacian on $\RR^d$, where thick sets describe the complete class of sets for which~\eqref{eq:spectral-abstract}
holds in this situation. Following Kovrijkine's ideas, the above mentioned works~\cite{EV20} and~\cite{Egi21}
establish~\eqref{eq:spectral-abstract} for the pure Laplacian on hyperrectangles and infinite strips, respectively. Moreover,
based on essentially the same reasoning, finite linear combinations of the standard Hermite functions have been treated
in~\cite{BJPS18}, establishing~\eqref{eq:spectral-abstract} also for the harmonic oscillator on $\RR^d$. The question arising now
is how much one can stretch these techniques to study~\eqref{eq:spectral-abstract} with thick sets $\omega$ on other domains
and/or for other operators.

\subsection{A Logvinenko-Sereda-type theorem}
The main aim of the present paper is to establish a general framework that covers the above mentioned results
from~\cite{Kovrijkine:01,Kovrijkine:thesis,EV20, Egi21, BJPS18} and, at the same time, allows to extend them to a larger class of
domains and operators. In this regard, we consider measurable subsets $\omega$ of a domain $\Omega\subset\RR^d$ that
are~\emph{thick in $\Omega$}, that is, there exist $\gamma\in(0,1]$ and $\rho>0$ such that
\begin{equation*}
 \abs{ \omega \cap (x + (0,\rho)^d) } \ge \gamma \rho^d
\end{equation*}
for all $x \in \RR^d$ with $x + (0,\rho)^d \subset \Omega$, where $\abs{\,\cdot\,}$ denotes the Lebesgue measure. In this case,
we also refer to $\omega$ as $(\gamma,\rho)$-thick in $\Omega$ to emphasise the parameters. A particular instance of these are
sets of the form $\omega = \Omega \cap S$ with some set $S \subset \RR^d$ that is $(\gamma,\rho)$-thick in $\RR^d$, in which case
the thickness parameters do not depend on $\Omega$.

The material we present here builds upon the mentioned works and draws on two central concepts. The first one is the concept
of~\emph{Bernstein-type inequalities}:
We suppose that for all $\lambda \in \RR$ we have
$\Ran \EE_H(\lambda) \subset W^{\infty,2}(\Omega) = \bigcap_{k\in\NN} W^{k,2}(\Omega)$, where $W^{k,2}(\Omega)$ is the
$L^2(\Omega)$-Sobolev space of order $k$, and that for some function $C_B \colon \NN_0 \times \RR \to (0,\infty)$ every
$f \in \Ran\EE_H(\lambda)$ satisfies
\begin{equation}\label{eq:Bernstein}
 \sum_{\abs{\alpha}=m} \frac{1}{\alpha!} \norm{\partial^\alpha f}{L^2(\Omega)}^2
 \le
 \frac{C_B(m,\lambda)}{m!} \norm{f}{L^2(\Omega)}^2
\end{equation}
for all $m \in \NN_0$; of course, for $\lambda$ below the infimum of the spectrum of $H$, the choice of $C_B(\cdot,\lambda)$ is
completely arbitrary. These estimates resemble (for $m=1$) the well-known Bernstein inequality from Fourier analysis, where the
$L^2$-norm of the partial derivatives of a function with compactly supported Fourier transform can be bounded by its $L^2$-norm
times a constant depending on the support of the Fourier transform, see, e.g.,~\cite[Theorem~11.3.3]{Boas54}
or~\cite[Proposition~1.11 and Lemma~10.1]{MS13}.

The second concept we use is that of a suitable covering of $\Omega$: Given $\kappa \ge 1$, $\varrho > 0$, $\eta >0$, and
$l = (l_1,\dots,l_d) \in [\varrho,\infty)^d$, we call a finite or countably infinite family $\{Q_j\}_{j\in J}$ of non-empty
bounded convex open subsets $Q_j \subset \Omega$ a~\emph{$(\kappa,\varrho,l,\eta)$-covering of $\Omega$} if
\begin{enumerate}[(i)]

 \item $\Omega \setminus \bigcup_{j \in J} Q_j$ has Lebesgue measure zero;\label{it:covering}

 \item $\sum_{j \in J} \norm{g}{L^2(Q_j)}^2\leq \kappa \norm{g}{L^2(\Omega)}^2$ for all $g \in L^2(\Omega)$;\label{it:overlap}

 \item each $Q_j$ contains a hypercube of the form $x + (0,\varrho)^d$, $x \in \RR^d$, and lies in a hyperrectangle of the form
       $y + \bigtimes_{k=1}^d (0,l_k)$, $y \in \RR^d$;\label{it:geometry}
       
 \item for each $Q_j$ there is a linear bijection $\bijection_j \colon \RR^d \to \RR^d$ with
       \begin{equation*}
        \abs{\bijection_j(Q_j)} \geq \eta \diam(\bijection_j(Q_j))^d.
       \end{equation*}
       \label{it:bijection}
\end{enumerate}
Here, the quantity $\kappa$ in~\eqref{it:overlap} can be interpreted as the maximal essential overlap 
between the sets $Q_j$, whereas the bijection $\bijection_j$ in~\eqref{it:bijection} may be used to 
compensate for the case where $Q_j$ has an unfavourable ratio of its volume and the $d$-th power of its diameter,
cf.~Remark~\ref{rmk:bijection} below. Of course, the existence of such a covering for given parameters $(\kappa,\varrho,l,\eta)$
depends on the geometry of $\Omega$ and requires, in particular, that at least one hypercube of the
form $x + (0,\varrho)^d$, $x \in \RR^d$, is contained in $\Omega$. 
In our context of $(\gamma,\rho)$-thick sets in $\Omega$, we usually consider $\varrho = \rho$.

We are now in position to formulate the main result of the present paper.

\begin{thm}\label{thm:LS}
 Let $\omega$ be $(\gamma,\rho)$-thick in a domain $\Omega \subset \RR^d$ with $(\kappa,\rho,l,\eta)$-covering, and let $H$ be a
 lower semibounded operator on $L^2(\Omega)$ such that for some function $C_B \colon \NN_0 \times \RR \to (0,\infty)$ every
 $f \in \Ran\EE_H(\lambda)$ satisfies a Bernstein-type inequality as in~\eqref{eq:Bernstein}.

 If for $\lambda \in \RR$ we have
 \begin{equation}\label{eq:BernsteinSum:intro}
  h(\lambda)
  :=
  \sum_{m \in \NN_0} \sqrt{C_B(m,\lambda)}\, \frac{(10\norm{l}{1})^m}{m!}
  <
  \infty
 \end{equation}
 with $\norm{l}{1} = l_1 + \dots + l_d$, then every $f \in \Ran\EE_H(\lambda)$ satisfies
 \begin{equation}\label{eq:LS}
  \norm{f}{L^2(\Omega)}^2
  \le
  \frac{\kappa}{6}
  \Bigl( \frac{24d\tau_d l_1\cdots l_d}{\gamma\eta\rho^d} \Bigr)
   ^{2\frac{\log\kappa}{\log 2} + 4\frac{\log h(\lambda)}{\log 2} + 5}
   \norm{f}{L^2(\omega)}^2
  ,
 \end{equation}
 where $\tau_d = \Gamma(d/2 +1)^{-1}\pi^{d/2}$, $\Gamma(\cdot)$ being the Gamma function, denotes the Lebesgue measure of the
 Euclidean unit ball in $\RR^d$.
\end{thm}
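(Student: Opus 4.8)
The plan is to follow Kovrijkine's strategy, adapted to the abstract setting. First I would reduce to working on a single cell $Q_j$ of the covering. The Bernstein-type inequality~\eqref{eq:Bernstein} together with the hypothesis~\eqref{eq:BernsteinSum:intro} should yield, for each $f \in \Ran\EE_H(\lambda)$, a real-analyticity statement on a complex neighbourhood of $Q_j$ with quantitative control: expanding $f$ in a Taylor series around a suitable point and using~\eqref{eq:Bernstein} to bound the multi-indexed derivatives, the series $\sum_m \sqrt{C_B(m,\lambda)}\,(10\|l\|_1)^m/m!$ converging means $f$ extends holomorphically to a polydisc (or a tube) of radius comparable to $\|l\|_1$ around $Q_j$, with the sup-norm of the extension bounded by roughly $h(\lambda)$ times $\norm{f}{L^2(\Omega)}$ (after using that $Q_j$ fits in a hyperrectangle of sidelengths $l_k$). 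This is the step that replaces "compact Fourier support" in the classical argument.

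Next, the core of the argument is a local estimate of the form $\norm{f}{L^2(Q_j)}^2 \le (\text{const})^{A_j}\norm{f}{L^2(\omega \cap Q_j)}^2$ where the exponent $A_j$ depends logarithmically on the ratio of the local sup-norm of the holomorphic extension to the local $L^2$-norm $\norm{f}{L^2(Q_j)}$ — the "Turán-type" or "local to global" lemma for analytic functions on a set of positive measure. Here one uses the thickness of $\omega$ in $\Omega$ (so that $\omega \cap Q_j$ has measure bounded below in terms of $\gamma,\rho$ and $|Q_j|$, the latter controlled via property~\eqref{it:geometry}) and the bijection $\bijection_j$ from property~\eqref{it:bijection} to pass to a set with a favourable volume-to-diameter ratio before applying the one-dimensional Kovrijkine-type remez inequality slicing argument. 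The factor $24d\tau_d l_1\cdots l_d/(\gamma\eta\rho^d)$ should emerge precisely as the comparison constant between $|Q_j|$, $\eta\,\diam(\bijection_j Q_j)^d$, and $\gamma\rho^d$.

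Then I would sum over $j \in J$. Partition $J$ into "good" cells, where the local ratio of sup-norm to $L^2(Q_j)$-norm is below a threshold (so $A_j$ is uniformly bounded), and "bad" cells; a Chebyshev/pigeonhole argument using property~\eqref{it:overlap} shows the bad cells carry at most a fixed fraction (say half) of $\norm{f}{L^2(\Omega)}^2$, so $\sum_{j \text{ good}}\norm{f}{L^2(Q_j)}^2 \ge \tfrac{1}{2\kappa}\norm{f}{L^2(\Omega)}^2$ roughly, while on good cells the local estimate has a uniform exponent. Combining, using property~\eqref{it:overlap} again to control $\sum_j \norm{f}{L^2(\omega\cap Q_j)}^2 \le \kappa\norm{f}{L^2(\omega)}^2$, and tracking how the threshold enters the exponent (it contributes the $\log\kappa/\log 2$ and $\log h(\lambda)/\log 2$ terms), yields~\eqref{eq:LS} with the stated constants.

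The main obstacle, I expect, is the local analytic estimate: making the holomorphic extension and its sup-norm bound genuinely quantitative and uniform over all cells $Q_j$ — in particular handling cells of wildly different shapes through the linear bijections $\bijection_j$ while keeping the comparison constants under control — and then running the one-dimensional Remez-type inequality along complex lines through the transformed cell with explicit constants. The bookkeeping to get the clean exponent $2\log\kappa/\log 2 + 4\log h(\lambda)/\log 2 + 5$ rather than something messier will require care in choosing the good/bad threshold optimally.
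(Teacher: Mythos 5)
Your outline reproduces the paper's proof strategy essentially step for step: Bernstein plus the summability condition~\eqref{eq:BernsteinSum:intro} yields quantitative analyticity and a holomorphic extension to a polydisc of size comparable to $l$; Kovrijkine's one-variable lemma combined with a spherical-coordinates dimension reduction and the bijections $\bijection_j$ gives the local estimate; and a good/bad decomposition of the covering with a Chebyshev argument assembles the global bound. So the approach is the intended one.

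The one place where the sketch, as written, does not close is the interplay between the sup-norm bound for the holomorphic extension and the good/bad split. A bound of the form $\sup\abs{F}\lesssim h(\lambda)\,\|f\|_{L^2(\Omega)}$ on an arbitrary cell is not usable in the local lemma, because the relevant quantity there is $M_j=\sqrt{\abs{Q_j}}\,\sup\abs{F}/\|f\|_{L^2(Q_j)}$, which must be bounded uniformly in $j$; with the global norm in the numerator this ratio blows up on cells carrying little mass. You address this by declaring such cells bad, but then your Chebyshev step --- ``property~(ii) shows the bad cells carry at most half the mass'' --- is not available for that definition: property~(ii) applied to $f$ itself says nothing about sup-norms of analytic extensions. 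The paper resolves this by defining good cells through the \emph{local Bernstein sums}, namely $\sum_{\abs{\alpha}=m}\frac{1}{\alpha!}\|\partial^\alpha f\|_{L^2(Q_j)}^2\le 2^{m+1}\kappa\,\frac{C_B(m)}{m!}\|f\|_{L^2(Q_j)}^2$ for all $m$; the bad-mass bound then follows from Chebyshev, property~(ii) applied to each $\partial^\alpha f$, and the global Bernstein inequality, and on each good cell an averaging argument produces a point $x_0$ with pointwise derivative bounds relative to $\|f\|_{L^2(Q_j)}/\sqrt{\abs{Q_j}}$, whence the Taylor series converges on $x_0+D_{5l}\supset Q_j+D_{4l}$ and $M_j\le 2\kappa^{1/2}h(\lambda)$ uniformly. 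In other words, the uniform sup-norm control is a \emph{consequence} of goodness, not an a priori fact on all cells; you would need to reorder your argument accordingly (or, as in the Hermite-function paper you implicitly echo, substitute a Sobolev embedding, at the price of worse dimension dependence) to turn the sketch into a proof with the stated exponent.
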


Following the reasoning of~\cite[Section~3.\S{}1.1]{havin-joericke:94}, Theorem~\ref{thm:LS} yields the inequality
\begin{equation*}
 \norm{f}{L^2(\Omega)}^2
 \leq
 C\bigl( \norm{f}{L^2(\omega)}^2 + \norm{(1-\EE_H(\lambda))f}{L^2(\Omega)}^2 \bigr)
 \qquad \forall\, f\in L^2(\Omega)
 ,
\end{equation*}
where the constant $C > 0$ is determined by the constant from~\eqref{eq:LS} relating $\norm{f}{L^2(\Omega)}$ and
$\norm{f}{L^2(\omega)}$ for $f \in \Ran\EE_H(\lambda)$. This can be viewed as an abstract variant of~\eqref{eq:annihilating} and
may give rise to the general notion of $(\Omega \setminus \omega, (-\infty,\lambda])$ as an annihilating pair.

Bernstein-type inequalities with condition~\eqref{eq:BernsteinSum:intro} play the central role in Theorem~\ref{thm:LS} and are
the starting point for every application. Note that condition~\eqref{eq:BernsteinSum:intro} constraints the admissible parameters
$l=(l_1,\dots,l_d)$ for the covering of $\Omega$ and, in turn, the thickness parameter $\rho$ for the set $\omega$. It also
determines the dependency of the final estimate on $\lambda$, which is of particular importance in view of the desired
form~\eqref{eq:spectralIneqControl}. Clearly, condition~\eqref{eq:BernsteinSum:intro} is governed by the constants
$C_B(m,\lambda)$, which depend on the domain $\Omega$ and the operator $H$ under consideration.

\begin{rmk}
	The factor $10$ in condition~\eqref{eq:BernsteinSum:intro} is chosen merely for	convenience and simplicity. With a suitable
	adaptation of the proof, namely Lemma~\ref{lem:kovrijkine_orig} below and the notion of good and bad elements of the covering
	in Section~\ref{subsec:good_and_bad}, this factor can be replaced by anything larger than $1$, but at the cost of a more
	involved and slightly worse constant relating $\norm{f}{L^2(\Omega)}$ and $\norm{f}{L^2(\omega)}$ in the overall
	inequality~\eqref{eq:LS}. Since this is of no importance for our applications discussed below, we refrain from pursuing this
	further here.
\end{rmk}

The rest of the paper is organised as follows. In Section~\ref{subsec:applications}, we give a brief overview on situations where
we know how to verify the hypotheses of Theorem~\ref{thm:LS} and thus obtain respective spectral inequalities of the desired
form~\eqref{eq:spectralIneqControl} with explicit control of the dependency of the involved constants on the geometric parameters
$\gamma$ and $\rho$ of the thick set. Here, the relation~\eqref{eq:Bernstein:spectral} below is of particular importance for the
case of the pure Laplacian and seems to add some novel aspects to the theory. Section~\ref{sec:bernstein} is then devoted to the
rigorous proof of the Bernstein-type inequalities for these situations, including said relation~\eqref{eq:Bernstein:spectral} for
the pure Laplacian. The proof of (a more general variant of) the main Theorem~\ref{thm:LS} is given in
Section~\ref{sec:main_proof}. Finally, some auxiliary material including the construction of suitable coverings for certain
domains is provided in the appendices.

\subsection{Applications and examples}\label{subsec:applications}
The applications we have in mind for Theorem~\ref{thm:LS} concern the situation where $H$ is given by a second order elliptic
differential operator with homogeneous Dirichlet or Neumann boundary conditions (where applicable). It turns out, however, that
if condition~\eqref{eq:BernsteinSum:intro} holds, then all functions $f \in \Ran\EE_H(\lambda)$ are necessarily analytic in
$\Omega$ in the sense that they can locally be expanded into convergent power series, see Lemma~\ref{lem:Bernstein} below. This,
in turn, is to be expected only if the coefficients of the elliptic differential operator are analytic in $\Omega$. So far, we
know how to verify~\eqref{eq:Bernstein} on certain Euclidean domains such that~\eqref{eq:BernsteinSum:intro} holds for some
$l \in (0,\infty)^d$ only for the pure Laplacian, some divergence-type operators with constant coefficient matrix, and the
harmonic oscillator, which are analysed separately below. We hope to broaden the class of operators and domains for which
Theorem~\ref{thm:LS} can be applied in future research.

It should be mentioned that some of the results discussed below could in essence be inferred also from existing spectral
inequalities by rescaling and reflection. However, our approach is based solely on manipulations on the level of the
Bernstein-type inequalities, which has the advantage that the thick set does not need to be transformed as well. Moreover, the
validity of Bernstein-type inequalities in these situations may be of independent interest.

\subsubsection{The pure Laplacian}\label{subsubsec:Laplacian}

In the case of the pure Laplacian, Bernstein-type inequalities of the form~\eqref{eq:Bernstein} most naturally follow from the
relation
\begin{equation}\label{eq:Bernstein:spectral}
	\sum_{\abs{\alpha}=m} \frac{1}{\alpha!} \norm{\partial^\alpha f}{L^2(\Omega)}^2
	=
	\frac{1}{m!} \langle f , (-\Delta_\Omega)^m f \rangle_{L^2(\Omega)}
	,\quad
	f \in \Ran\EE_{-\Delta_\Omega}(\lambda).
\end{equation}
Indeed, by functional calculus we have
$\langle f , (-\Delta_\Omega)^m f \rangle_{L^2(\Omega)} \le \lambda^m \norm{f}{L^2(\Omega)}^2$ for all
$f \in \Ran\EE_{-\Delta_\Omega}(\lambda)$, $\lambda \ge 0$, implying that in this case the constant $C_B(m, \lambda)$
in~\eqref{eq:Bernstein} is exactly $\lambda^m$ for $\lambda \ge 0$, independently of any geometric parameters of the domain
under consideration. The resulting quantity $h(\lambda)$ in~\eqref{eq:BernsteinSum:intro} then reads
\begin{equation*}
	h(\lambda) = \exp(10 \norm{l}{1} \sqrt{\lambda}),
\end{equation*}
which poses no constraints on the parameter $l$ of the covering of $\Omega$. 

In Proposition~\ref{prop:bernstein-pureLaplacian} below, we establish relation~\eqref{eq:Bernstein:spectral} for the following
domains:
\begin{itemize}  

	\item
	\emph{generalised rectangles} $\bigtimes_{j=1}^d (a_j,b_j)$ with $a_j,b_j \in \RR \cup \{\pm\infty\}$, $a_j < b_j$;
	
	\item
	equilateral triangles, isosceles right-angled triangles, and hemiequilateral triangles in $\RR^2$;
	
	\item
	sectors $S_\theta = \{ (x_1,x_2) \in (0,\infty)^2 \colon x_2 < x_1\tan \theta \} \subset \RR^2$ with
	$\theta = \pi / n$, $n \in \NN$, $n \ge 3$;
	
	\item
	any finite Cartesian product of the above.
	
\end{itemize} 
Sectors with angle $\pi/2$ and $\pi$ are actually generalised rectangles in dimension $d = 2$ and are therefore covered there.

In the case of the whole $\RR^d$, hyperrectangles, and strips of the form hypercube$\times \RR$, Bernstein-type
inequalities of the specific form~\eqref{eq:Bernstein} can easily be deduced also from the related variants used
in~\cite{Egi21,EV20,Kovrijkine:thesis,Kovrijkine:01}. The more precise relation~\eqref{eq:Bernstein:spectral}, however, is much
deeper and seems not to have been considered anywhere before. We picture this as one of the key aspects of the present work and
believe it to be of independent interest.

For each of the domains listed above, provided that it contains a hypercube of the form $x+(0,\rho)^d$, $x\in\RR^d$, it is
possible to construct a suitable covering with $\varrho = \rho$, where the other parameters $\kappa$, $l$, and $\eta$ can be
bounded efficiently in terms of $\rho$, the dimension $d$, and possibly angles $\theta$ (if sectors are involved). A
corresponding statement is given in the following lemma. Its proof is merely a technical issue and is thus deferred to
Appendix~\ref{sec:cov}.

\begin{lemma}\label{lem:covering}
	For each of the above mentioned domains containing a hypercube of the form $x + (0,\rho)^d$, $x \in \RR^d$, there is a
	$(2^d,\rho,l,(2d)^{-d/2})$-covering of $\Omega$ with $l = (k_1\rho , \dots, k_d\rho)$, where $k_1,\dots,k_d > 0$ are constants
	depending at most on the opening angles of possibly involved sectors.
\end{lemma}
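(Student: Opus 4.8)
The plan is to construct the covering separately for the four building blocks (generalised rectangles, the three triangle types, sectors) and then reduce finite Cartesian products to these by taking products of coverings with block-diagonal bijections. In every case the pieces will be convex sets of diameter comparable to $\rho$ whose shape is, after a linear map, a product of cubes, equilateral triangles, and --- along slanted boundaries --- right trapezoids, which already takes care of~\eqref{it:bijection}: a cube $(0,s)^k$ is carried by a diagonal $\bijection_j$ to the unit cube, with ratio $|\bijection_j(Q_j)|/\diam(\bijection_jQ_j)^k=k^{-k/2}$; an equilateral triangle is carried by a rotation--scaling to the standard one, with ratio $\sqrt3/4$; and a right trapezoid as below is carried by a diagonal rescaling into the unit square, where a direct computation gives ratio $>1/4$. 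If $Q_j=\prod_iK_i$ is such a product with $K_i\subset\RR^{d_i}$ and one rescales the block $\bijection^{(i)}_{j_i}$ so that $\diam(\bijection^{(i)}_{j_i}K_i)=\sqrt{d_i}$, then $|\bijection^{(i)}_{j_i}(K_i)|\ge 2^{-d_i/2}$ in each of the three cases, so the block-diagonal map $\bijection_j=\bigoplus_i\bijection^{(i)}_{j_i}$ satisfies $|\bijection_j(Q_j)|\ge 2^{-d/2}$ and $\diam(\bijection_jQ_j)^2=\sum_id_i=d$, i.e.~\eqref{it:bijection} holds with exactly $\eta=(2d)^{-d/2}$. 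It then remains to exhibit, for each block, the covering itself with~\eqref{it:covering}--\eqref{it:geometry}.

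For a generalised rectangle $\Omega=\bigtimes_{k=1}^d(a_k,b_k)$ the hypothesis forces $b_k-a_k\ge\rho$; partitioning each $(a_k,b_k)$ into $\lfloor(b_k-a_k)/\rho\rfloor$ equal subintervals when $b_k-a_k<\infty$ (lengths in $[\rho,2\rho)$) and into consecutive subintervals of length $\rho$ otherwise, the products of subintervals form an essentially disjoint family of axis-parallel boxes with all sides in $[\rho,2\rho)$, giving~\eqref{it:covering}--\eqref{it:geometry} with $\kappa=1$ and $l=(2\rho,\dots,2\rho)$. For one of the three triangles, fix a universal constant $c_0$ large enough that each of these (fixed, non-degenerate) shapes, scaled to any diameter in $[c_0\rho,2c_0\rho)$ and put in any orientation, contains an axis-parallel $\rho$-cube and lies in an axis-parallel box of sides $\le 2c_0\rho$ --- possible because the inradius of each is comparable to its diameter. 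If $\diam(\Omega)<2c_0\rho$ take $Q_1=\Omega$ (which contains an axis-parallel $\rho$-cube by hypothesis); otherwise apply $n$ times the standard subdivision of a triangle into four similar triangles by joining the midpoints of its sides, with $n$ minimal so that $2^{-n}\diam(\Omega)<2c_0\rho$ (hence $\ge c_0\rho$), which partitions $\Omega$ into $4^n$ triangles similar to $\Omega$ of diameter in $[c_0\rho,2c_0\rho)$. In both cases the family is essentially disjoint ($\kappa=1$) and satisfies~\eqref{it:geometry} with $l=(2c_0\rho,2c_0\rho)$.

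For a sector, write $\theta=\pi/n$, $S_\theta=\{x_1>0,\ 0<x_2<x_1\tan\theta\}$, and set $R:=(3\cot\theta+3)\rho$, chosen so that the triangular tip $C:=S_\theta\cap\{x_1<R\}$ contains an axis-parallel $\rho$-cube and $R\tan\theta\ge 3\rho$; take $C$ as one piece. Cover $B:=S_\theta\cap\{x_1>R\}$ by vertical columns $\mathrm{Col}_k:=B\cap\{c_k<x_1<c_{k+1}\}$ with $c_0=R$ and $c_{k+1}-c_k\in[\rho,2\rho)$, and inside each $\mathrm{Col}_k$ cut horizontally at spacing in $[\rho,2\rho)$ up to height $c_k\tan\theta-2\rho$, adjoining the single top piece $\mathrm{Col}_k\cap\{x_2>c_k\tan\theta-2\rho\}$. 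Every non-top piece is an axis-parallel box lying strictly below the slanted edge; every top piece is a right trapezoid of diameter $O_\theta(\rho)$ that still contains an axis-parallel $\rho$-cube since $c_k\tan\theta-2\rho\ge\rho$. The whole family $\{C\}\cup\{\text{column pieces}\}$ is essentially disjoint ($\kappa=1$), covers $S_\theta$ up to a null set, and satisfies~\eqref{it:geometry} with $l=(k_1\rho,k_2\rho)$, $k_1,k_2$ depending only on $\theta$.

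Finally, if $\Omega=\Omega_1\times\cdots\times\Omega_p$ with each $\Omega_i$ one of these blocks, the $\rho$-cube in $\Omega$ projects to one in each $\Omega_i$, so each $\Omega_i$ admits a covering as above; the products of the pieces cover $\Omega$ up to a null set, have overlap $\prod_i\kappa_i=1\le 2^d$ by applying~\eqref{it:overlap} coordinatewise (the slack up to $2^d$ leaves room for mild overlaps if one prefers them in the one-dimensional partitions), contain products of $\rho$-cubes, lie in products of the boxes (so~\eqref{it:geometry} holds with $l$ the concatenation of the $l^{(i)}$), and satisfy~\eqref{it:bijection} with the block-diagonal bijections of the first paragraph. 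The one genuinely delicate point throughout is the treatment of the pieces meeting the non-axis-parallel boundary parts --- the slanted sides of the triangles and the slanted ray of a sector --- where the cut must keep the pieces convex, of diameter $O(\rho)$, and still large enough to contain an axis-parallel $\rho$-cube; this is exactly what dictates the self-similar subdivision for the triangles and the column-with-enlarged-trapezoidal-top construction for the sectors, and it is also the source of the sharp value $(2d)^{-d/2}$ for the bijection parameter.
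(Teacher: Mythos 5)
Your construction is correct and establishes the lemma, but it differs from the paper's in the treatment of the two-dimensional building blocks. The paper tiles each block explicitly at scale $\rho$: intervals of length exactly $\rho$ with overlap at most $2$ per coordinate for generalised rectangles; up- and down-oriented equilateral triangles of side $\sqrt{3}\rho$ for the equilateral triangle; $\rho$-squares together with one or two special convex ``cusp'' pieces (a truncated corner $S_\theta\cap((0,q)\times(0,\rho))$ and, for $\cR_{\theta,L}$, a small copy of the triangle at the apex) for sectors and right-angled triangles, accepting an overlap of up to $3$; the Cartesian product step is then handled by a lemma essentially identical to your block-diagonal bijection argument, with the same bookkeeping $\eta\ge c\,d^{-d/2}$ per block yielding $(2d)^{-d/2}$ overall. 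Your route replaces these ad hoc tilings by an iterated midpoint subdivision for all three triangle types and by a tip-plus-columns decomposition for sectors. What your approach buys is exact partitions ($\kappa=1$ throughout, comfortably within the allowed $2^d$) and a uniform argument for the three triangle shapes via the inradius-to-diameter ratio; what the paper's approach buys is pieces confined to hyperrectangles with small explicit side lengths (e.g.\ $l=(\sqrt3\rho,3\rho/2)$ for the equilateral triangle) rather than your $2c_0\rho$ with an unspecified universal $c_0$ --- immaterial for the lemma as stated, since $l=(k_1\rho,\dots,k_d\rho)$ with unspecified $k_j$ is all that is claimed, but it keeps the constants in the downstream corollaries more concrete.

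One point you should tighten: for the isosceles right-angled and in particular the hemiequilateral triangle pieces, a rotation--scaling alone does \emph{not} suffice for condition (iv), since the hemiequilateral triangle has $\abs{\cT}/\diam(\cT)^2=\sqrt3/8<1/4$. You do need a genuine shear, e.g.\ the linear bijection taking the piece onto an equilateral triangle (ratio $\sqrt3/4$) or onto half of the unit square (ratio exactly $1/4$), and only then does the per-block normalisation $\abs{\bijection^{(i)}K_i}\ge 2^{-d_i/2}$ at $\diam=\sqrt{d_i}$ hold. Your opening sentence gestures at this (``after a linear map''), but the explicit computation is carried out only for the equilateral case; spelling it out closes the argument.
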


Finally, using the asymptotic formula $\tau_d \sim (2\pi\ee/d)^{d/2} / \sqrt{d\pi}$, we see that $d^{1+d/2} \tau_d$ can be
bounded from above by the $d$-th power of some absolute constant. Consequently, putting all of the above considerations together,
Theorem~\ref{thm:LS} gives the following result.

\begin{cor}\label{cor:pureLaplacian}
	Let $\Omega \subset \RR^d$ be one of the above mentioned domains, and let $\omega$ be a $(\gamma,\rho)$-thick set in $\Omega$.
	Then, for all $f\in \Ran\EE_{-\Delta_\Omega}(\lambda)$, $\lambda \geq 0$, we have 
	\begin{equation*}
		\norm{f}{L^2(\Omega)}^2
		\leq
		\Bigl( \frac{C^d}{\gamma} \Bigr)^{C d\rho\sqrt{\lambda} + 2d + 6} \norm{f}{L^2(\omega)}^2
		,
	\end{equation*}
	where $C>0$ depends at most on the opening angles of possibly involved sectors.
\end{cor}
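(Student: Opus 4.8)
The plan is to deduce Corollary~\ref{cor:pureLaplacian} as a direct specialisation of Theorem~\ref{thm:LS}, combining the Bernstein-type relation~\eqref{eq:Bernstein:spectral}, the covering from Lemma~\ref{lem:covering}, and the asymptotic estimate on $\tau_d$. First I would fix one of the listed domains $\Omega \subset \RR^d$ containing a hypercube $x + (0,\rho)^d$ (if it contains no such hypercube there is nothing to prove, since one rescales, or the statement is vacuous for the relevant $\rho$), so that by Proposition~\ref{prop:bernstein-pureLaplacian} relation~\eqref{eq:Bernstein:spectral} holds. By functional calculus, for $f \in \Ran\EE_{-\Delta_\Omega}(\lambda)$ with $\lambda \ge 0$ one has $\langle f, (-\Delta_\Omega)^m f\rangle \le \lambda^m \norm{f}{L^2(\Omega)}^2$, so~\eqref{eq:Bernstein} holds with $C_B(m,\lambda) = \lambda^m$; hence $h(\lambda) = \sum_{m} \lambda^{m/2}(10\norm{l}{1})^m/m! = \exp(10\norm{l}{1}\sqrt\lambda) < \infty$ for every choice of $l$, so condition~\eqref{eq:BernsteinSum:intro} is automatic.

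Next I would invoke Lemma~\ref{lem:covering} to get a $(2^d,\rho,l,(2d)^{-d/2})$-covering of $\Omega$ with $l = (k_1\rho,\dots,k_d\rho)$ where the $k_j > 0$ depend only on $d$ and the opening angles of the sectors possibly appearing in $\Omega$; note $\norm{l}{1} = (k_1 + \dots + k_d)\rho =: k\rho$ with $k$ of the same nature. Plugging $\kappa = 2^d$, $\eta = (2d)^{-d/2}$, $l_1\cdots l_d = k_1\cdots k_d\,\rho^d$, and $h(\lambda) = \exp(10k\rho\sqrt\lambda)$ into~\eqref{eq:LS} turns the base of the exponent into
\[
 \frac{24 d\,\tau_d\, k_1\cdots k_d\,\rho^d}{\gamma\,(2d)^{-d/2}\,\rho^d}
 =
 \frac{24\, d^{1+d/2}\, 2^{d/2}\,\tau_d\, k_1\cdots k_d}{\gamma}
 ,
\]
and the exponent into $2\frac{\log 2^d}{\log 2} + 4\frac{\log h(\lambda)}{\log 2} + 5 = 2d + \frac{40 k\rho\sqrt\lambda}{\log 2} + 5$. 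Using $\log h(\lambda) = 10k\rho\sqrt\lambda$ together with $\log 2 > 1/2$ bounds $4\log h(\lambda)/\log 2 \le 80 k\rho\sqrt\lambda \le C d\rho\sqrt\lambda$ after absorbing $k$ (which is $\le$ a constant times $d$, or depends only on the angles) into the absolute constant $C$; similarly $2d + 5 \le 2d + 6$ and the leftover prefactor $\kappa/6 = 2^d/6 \le 2^d$ can be folded into $(C^d)^{\cdots}$.

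The remaining point is to see that $d^{1+d/2}\tau_d$ is bounded by the $d$-th power of an absolute constant. Here I would use the stated asymptotics $\tau_d \sim (2\pi \ee / d)^{d/2}/\sqrt{d\pi}$, which gives $d^{1+d/2}\tau_d \sim \sqrt{d/\pi}\,(2\pi\ee)^{d/2} = \sqrt{d/\pi}\,(2\pi\ee)^{d/2}$; since $\sqrt{d/\pi} \le (\text{const})^d$ trivially, the whole quantity is $\le C_0^d$ for some absolute $C_0$. Consequently the base $\frac{24 d^{1+d/2} 2^{d/2}\tau_d k_1\cdots k_d}{\gamma}$ is at most $\frac{C^d}{\gamma}$ with $C$ depending only on the opening angles (through the bound on $k_1\cdots k_d$), and the exponent is at most $Cd\rho\sqrt\lambda + 2d + 6$, giving exactly the claimed inequality. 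The only mild subtlety — hardly an obstacle — is the bookkeeping of how the sector-angle-dependent constants $k_j$ and the dimensional factors combine into a single constant $C$; everything else is a substitution into Theorem~\ref{thm:LS} and elementary estimates on $\log 2$ and the Gamma function asymptotics.
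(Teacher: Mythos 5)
Your proposal is correct and follows essentially the same route as the paper, which derives Corollary~\ref{cor:pureLaplacian} exactly by substituting $C_B(m,\lambda)=\lambda^m$, the $(2^d,\rho,l,(2d)^{-d/2})$-covering of Lemma~\ref{lem:covering}, and the bound $d^{1+d/2}\tau_d\le C_0^d$ into Theorem~\ref{thm:LS}. Your bookkeeping (absorbing $2^d/6$ by raising the exponent from $2d+5$ to $2d+6$, bounding $4\log h(\lambda)/\log 2$ via $\log 2>1/2$, and folding the angle-dependent $k_j$ into $C$) is sound, and your remark about the implicit assumption that $\Omega$ contains a $\rho$-hypercube matches the paper's own caveat preceding Lemma~\ref{lem:covering}.
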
 

If $\omega$ is chosen of the form $\omega = \Omega \cap S$ with some set $S$ that is $(\gamma,\rho)$-thick in $\RR^d$, then the
conclusion of the above corollary is uniform in all mentioned domains $\Omega$, provided that they contain a hypercube of the
form $x + (0,\rho)^d$. This is then consistent with the corresponding known results for the pure Laplacian on hyperrectangles,
strips, and the whole space $\RR^d$ proved in~\cite{EV20,Egi21,Kovrijkine:01,Kovrijkine:thesis} and, in particular, essentially
reproduces these results. Moreover, the obtained constant relating $\norm{f}{L^2(\Omega)}^2$ and $\norm{f}{L^2(\omega)}^2$ in
Corollary~\ref{cor:pureLaplacian} is optimal with respect to its dependence on the thickness parameters,
see~\cite[Section~2]{ENSTTV20} and also Example~\ref{ex:optimality} below in the more general setting of divergence-type
operators.

\begin{rmk}[Fractional Laplacian]\label{rmk:fractionalLaplacian}
	Using the transformation formula for spectral measures, see, e.g.,~\cite[Proposition~4.24]{Schm12}, the above corollary can be
	extended to cover also fractional Laplacians $(-\Delta_\Omega)^s$, $s > 0$. Indeed, for these operators we have the identity
	$\EE_{(-\Delta_\Omega)^s}(\lambda) = \EE_{-\Delta_\Omega}(\lambda^{1/s})$ for $\lambda \geq 0$, leading to a Bernstein-type
	inequality~\eqref{eq:Bernstein} with $C_B(m,\lambda) = \lambda^{m/s}$ for $\lambda \geq 0$. Consequently, the conclusion of
	Corollary~\ref{cor:pureLaplacian} holds also for these fractional Laplacians, but with $\sqrt{\lambda}$ replaced by
	$\lambda^{1/(2s)}$. The latter can also be deduced directly from Corollary~\ref{cor:pureLaplacian} via the transformation
	formula. Such a reasoning has previously been used in~\cite[Theorem~4.6]{NakicTTV-COVC}; see also~\cite[Remark~5.9]{ENSTTV20}.
	With regard to control theory and the particular form~\eqref{eq:spectralIneqControl} of a spectral inequality, it should,
	however, be mentioned that this is useful only if $1/(2s) < 1$, that is, $s > 1/2$. In fact, for $s \leq 1/2$ it is known that
	corresponding control properties of the fractional heat equation associated to $(-\Delta_\Omega)^s$ do not hold in general, see,
	e.g.,~\cite{MZ:06} and~\cite[Theorem 3]{koenig:18}.
\end{rmk}

\subsubsection{Divergence-type operators}\label{subsubsec:Divergence}

Let $H = H_\Omega(A)$ with some constant positive definite symmetric matrix $A \in \RR^{d\times d}$ be the Dirichlet or Neumann
realisation of the divergence-type differential expression
\begin{equation*}
 - \nabla \cdot (A\nabla)
\end{equation*}
as a non-negative self-adjoint operator on $L^2(\Omega)$ defined via its quadratic form. Denote by $\sigma_{\text{min}} > 0$ the
smallest eigenvalue of $A$. In this framework, we establish in Corollary~\ref{cor:bernstein-div-type} below the Bernstein-type
inequality~\eqref{eq:Bernstein} with the constant $C_B(m,\lambda) = (\sigma_{\text{min}}^{-1}\lambda)^m$, $\lambda \ge 0$, for
the following situations: 

\begin{itemize}

 \item $\Omega = \RR^d$ or a half-space;

 \item $A$ is a diagonal matrix and $\Omega = \bigtimes_{j=1}^d (a_j,b_j)$ a generalised rectangle as in the previous subsection.

\end{itemize}
This is essentially accomplished by transforming $H$ to a pure Laplacian and using the results discussed in the previous
subsection; the latter also explains the appearance of the additional factor $\sigma_{\min}^{-1}$ in the base of the
constants $C_B(m,\lambda)$, a consequence of a corresponding rescaling. The resulting quantity $h(\lambda)$
from~\eqref{eq:BernsteinSum:intro} reads
\begin{equation*}
 h(\lambda) = \exp(10\norm{l}{1}\sqrt{\lambda/\sigma_{\text{min}}}).
\end{equation*}
Combining this with the corresponding coverings from Lemma~\ref{lem:covering} and the asymptotic formula for $\tau_d$, we
obtain from Theorem~\ref{thm:LS} the following result.

\begin{cor}\label{cor:divergencetype}
 Let $A$ and $\Omega$ be as above, and let $\omega$ be a $(\gamma,\rho)$-thick set in $\Omega$. Then, for all $\lambda \geq 0$
 and all $f \in \Ran\EE_{H_\Omega(A)}(\lambda)$ we have 
 \begin{equation*}
  \norm{f}{L^2(\Omega)}^2
  \le
  \Bigl( \frac{K^d}{\gamma} \Bigr)^{Kd\rho\sqrt{\lambda / \sigma_{\min}} + 2d + 6} \norm{f}{L^2(\omega)}^2,
 \end{equation*}
 where $K > 0$ is a universal constant, and $\sigma_{\mathrm{min}}$ denotes the minimal eigenvalue of the positive definite
 matrix $A$.
\end{cor}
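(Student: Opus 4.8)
The plan is to reduce Corollary~\ref{cor:divergencetype} entirely to Theorem~\ref{thm:LS} by verifying its hypotheses for $H = H_\Omega(A)$. First I would invoke Corollary~\ref{cor:bernstein-div-type} (proved later in Section~\ref{sec:bernstein}) to assert that every $f \in \Ran\EE_{H_\Omega(A)}(\lambda)$ satisfies the Bernstein-type inequality~\eqref{eq:Bernstein} with $C_B(m,\lambda) = (\sigma_{\min}^{-1}\lambda)^m$ for $\lambda \ge 0$. With this choice the quantity from~\eqref{eq:BernsteinSum:intro} becomes
\begin{equation*}
 h(\lambda) = \sum_{m \in \NN_0} (\sigma_{\min}^{-1}\lambda)^{m/2}\,\frac{(10\norm{l}{1})^m}{m!} = \exp\bigl(10\norm{l}{1}\sqrt{\lambda/\sigma_{\min}}\bigr) < \infty,
\end{equation*}
so condition~\eqref{eq:BernsteinSum:intro} holds for every admissible $l$, imposing no constraint on the covering parameters or on $\rho$.

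Next I would supply a suitable covering of $\Omega$. Since in the two listed cases ($\Omega = \RR^d$ or a half-space; or $A$ diagonal and $\Omega$ a generalised rectangle) the domain $\Omega$ is a generalised rectangle up to the harmless exception of the diagonal-matrix case, Lemma~\ref{lem:covering} furnishes, whenever $\Omega$ contains a hypercube $x+(0,\rho)^d$, a $(2^d,\rho,l,(2d)^{-d/2})$-covering with $l = (k_1\rho,\dots,k_d\rho)$ for absolute constants $k_j$ (no sectors are involved here, so the $k_j$ are universal). Thus $\kappa = 2^d$, $\eta = (2d)^{-d/2}$, $\norm{l}{1} \le C_0 d\rho$ and $l_1\cdots l_d \le (C_0\rho)^d$ for a universal $C_0$. (If $\Omega$ contains no such hypercube, the statement is vacuous for the given $\rho$.)

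Then I would plug these values into the conclusion~\eqref{eq:LS} of Theorem~\ref{thm:LS} and simplify. The exponent $2\frac{\log\kappa}{\log 2} + 4\frac{\log h(\lambda)}{\log 2} + 5$ equals $2d + \frac{40}{\log 2}\norm{l}{1}\sqrt{\lambda/\sigma_{\min}} + 5 \le 2d + 6 + K' d\rho\sqrt{\lambda/\sigma_{\min}}$ for a universal $K'$, after absorbing the additive $+5$ and rounding up. For the base, $\frac{24 d \tau_d l_1\cdots l_d}{\gamma\eta\rho^d} \le \frac{24 d \tau_d (C_0\rho)^d (2d)^{d/2}}{\gamma\rho^d} = \frac{24\, d^{1+d/2} 2^{d/2} C_0^d \tau_d}{\gamma}$, and the asymptotic formula $\tau_d \sim (2\pi\ee/d)^{d/2}/\sqrt{d\pi}$ shows $d^{1+d/2}\tau_d \le C_1^d$ for a universal $C_1$; hence the base is bounded by $K_1^d/\gamma$ for a universal $K_1$. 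Finally the prefactor $\kappa/6 = 2^d/6$ can be absorbed into $(K_1^d/\gamma)^{\text{exponent}}$ by slightly enlarging the constant, since $\gamma \le 1$ and the exponent is at least $2d+6 > 1$. Choosing $K = \max\{K_1, K'\}$ (enlarged as needed to absorb all rounding) yields the stated bound.

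The only real work is bookkeeping of universal constants; the substantive input — the Bernstein-type inequality with base $\sigma_{\min}^{-1}\lambda$ and the existence of the covering — is quoted from Corollary~\ref{cor:bernstein-div-type} and Lemma~\ref{lem:covering}. The mild subtlety to watch is the diagonal-matrix case, where $\Omega$ is genuinely a generalised rectangle so Lemma~\ref{lem:covering} applies verbatim, and the $\RR^d$/half-space case, which are also covered by that lemma; in all cases no opening angles enter, so $K$ is truly universal. I expect no genuine obstacle here: once the two cited results are in place, the corollary is a direct specialisation of Theorem~\ref{thm:LS} together with elementary estimates on $\tau_d$ and on logarithms.
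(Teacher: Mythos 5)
Your proposal is correct and follows exactly the route the paper intends: quote the Bernstein constant $C_B(m,\lambda)=(\sigma_{\min}^{-1}\lambda)^m$ from Corollary~\ref{cor:bernstein-div-type}, compute $h(\lambda)=\exp(10\norm{l}{1}\sqrt{\lambda/\sigma_{\min}})$, take the $(2^d,\rho,l,(2d)^{-d/2})$-covering from Lemma~\ref{lem:covering}, and absorb all dimensional factors (including $d^{1+d/2}\tau_d$ via the asymptotics of $\tau_d$) into a universal $K$. The bookkeeping you carry out is precisely the specialisation of Theorem~\ref{thm:LS} that the paper leaves implicit.
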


It is interesting to compare the above statement with the result in~\cite{BTV17}, where a divergence-type operator with lower
order terms and uniformly elliptic matrix function $A$ with small Lipschitz constant is considered on $\RR^d$ and certain
hypercubes. The authors give a spectral inequality for small energy intervals and subsets $\omega$ of the form
$\omega = \Omega \cap S$, where $S \subset \RR^d$ is an equidistributed set, that is, a union of balls $B(x_j, \delta)$ with
$B(x_j,\delta)\subset (-G/2, G/2)^d+ j$, $j\in(G\ZZ)^d$. For simplicity, we only discuss the whole space $\Omega = \RR^d$ and the
case $G = 1$ here.

Applying~\cite[Theorem~10]{BTV17} for our current divergence-type operators $H_{\RR^d}(A)$ with constant positive definite
matrix $A$ (and without lower order terms) gives
\begin{equation}\label{eq:BTV}
 \norm{f}{L^2(\RR^d)}^2\leq D_1\Big(\frac{D_2}{\delta}\Big)^{D_3(1+\lambda^{2/3})} \norm{f}{L^2(\omega)}^2
\end{equation}
for all $f \in \Ran \EE_{H_{\RR^d}(A)}([\lambda-\epsilon, \lambda+\epsilon])$, where
$\epsilon = D_1^{1/2}(D_2/\delta)^{(D_3/2)(1+\lambda^{2/3})}$ and $D_1, D_2, D_3$ are constants depending only on the dimension
$d$ and on the quantity $\max\{ \sigma_{\min}^{-1}, \sigma_{\max} \}$ with $\sigma_{\min}$, $\sigma_{\max}$ being the smallest
and largest eigenvalue of $A$, respectively.

Since equidistributed sets with $G = 1$ are $(\tau_d \delta^d, 2)$-thick in $\RR^d$, Corollary~\ref{cor:divergencetype} gives
instead
\[
 \norm{f}{L^2(\RR^d)}^2
 \le
 \Big(\frac{K^d}{\tau_d \delta^d}\Big)^{2Kd\sqrt{\lambda/\sigma_{\min}} +2d + 6 } \norm{f}{L^2(\omega)}^2
\]
for all $f \in \Ran \EE_{H_{\RR^d}(A)}(\lambda)$. The latter has several advantages over~\eqref{eq:BTV}: it does not depend on
$\sigma_{\max}$, depends on $\lambda$ only via $\lambda^{1/2}$ (as compared to $\lambda^{2/3}$), and is not restricted to short
energy intervals.

In fact, the term $\gamma^{-\sqrt{\lambda / \sigma_{\min}}}$ in the bound from Corollary~\ref{cor:divergencetype} is optimal, as
is shown by the following example, which is essentially taken from~\cite[Example~2.6]{ENSTTV20} and adapted to our present
situation; see also~\cite{Kovrijkine:01}.

\begin{ex}\label{ex:optimality}
 Let $A=\diag(\sigma_1, \ldots, \sigma_d)$ with $\sigma_d \ge \dots \ge \sigma_1 = \sigma_{\min} > 0$. For $f \in L^2(\RR^d)$, we
 have $f \in \Ran \EE_{H_{\RR^d}(A)}(\lambda)$ for some $\lambda>0$ if and only if $f$ satisfies
 $\supp \hat{f} \subset \cE_{\lambda} := \{ x\in\RR^d \colon \sum_{j=1}^d \sigma_j x_j^2 \leq \lambda \}$, where $\hat{f}$
 denotes the Fourier transform of $f$.

 For $\lambda > 0$ such that $\alpha := \sqrt{\lambda} / (2\pi \sqrt{d\sigma_{\min}}) \geq 2$ is a natural number, define
 $g \in L^2(\RR)$ by
 \[
  g(t):= \Big(\frac{\sin(2\pi t)}{t}\Big)^\alpha.
 \]
 Then, we have $\supp \hat{g} = [ -2\pi\alpha , 2\pi\alpha ]$. Furthermore, choose $h \in L^2(\RR^{d-1})$ with
 $\supp \hat{h} \subset \bigtimes_{j=2}^d [ -\sqrt{\lambda}/(\sqrt{d\sigma_j}) , \sqrt{\lambda}/(\sqrt{d\sigma_j} ]$, and define
 the function $f \in L^2(\RR^d)$ by $f(x) := g(x_1)h(x_2,\dots,x_d)$. Then,
 \begin{equation*}
   \supp \hat{f}
   \subset
   \bigtimes_{j=1}^d \Bigl[ -\frac{\sqrt{\lambda}}{\sqrt{d\sigma_j}} , \frac{\sqrt{\lambda}}{\sqrt{d\sigma_j}} \Bigr]
   \subset
   \cE_\lambda
   ,
 \end{equation*}
 so that $f \in \Ran \EE_{H_{\RR^d}(A)}(\lambda)$.

 Given $\gamma \in (0,1)$, let $\omega := \omega_1 \times \RR^{d-1}$ with a $1$-periodic set $\omega_1\subset\RR$ such that
 $\omega_1 \cap [-1/2, 1/2]= [-1/2, (-1+\gamma)/2]\cup [(1-\gamma)/2, 1/2]$. Then, $\omega_1$ is $(\gamma,1)$-thick in $\RR$
 and, thus, $\omega$ is $(\gamma, 1)$-thick in $\RR^d$.

 With $2\alpha-2\geq 2$ and $\norm{g}{L^2(\RR)} \ge 1$, a calculation as in~\cite[Example~2.6]{ENSTTV20} shows that
 \begin{equation*}
   \norm{g}{L^2(\omega_1)}^2
   \le
   2\pi^2 (3\pi\gamma)^{2\alpha - 2}
   \le
   (3\sqrt{2}\pi^2\gamma)^{2\alpha - 2} \norm{g}{L^2(\RR)}^2
   .
 \end{equation*}
 By separation of variables and together with Corollary~\ref{cor:divergencetype}, this yields
 \[
  \Big(\frac{\gamma}{K^d}\Big)^{Kd \sqrt{\frac{\lambda}{\sigma_{\min}}}+2d +5} \norm{f}{L^2(\RR^d)}^2 
  \leq
  \norm{f}{L^2(\omega)}^2
  \leq
  \Big(\frac{\gamma}{1/(3\sqrt{2}\pi^2)}\Big)^{\frac{1}{\pi}\sqrt{\frac{\lambda}{d \sigma_{\min}}}-2} \norm{f}{L^2(\RR^d)}^2
  .
 \]
 Hence, the term $\gamma^{\sqrt{\lambda/\sigma_{\min}}}$ in this bound is indeed optimal.
\end{ex}

\subsubsection{The harmonic oscillator}\label{subsubsec:harmonicOscillator}
Let $H_\Omega = -\Delta_\Omega + \abs{x}^2$ be the Dirichlet or Neumann realisation of the differential expression
\begin{equation*}
 -\Delta + \abs{x}^2
\end{equation*}
as a non-negative self-adjoint operator on $L^2(\Omega)$ defined as a form sum. In this framework, we establish in
Proposition~\ref{prop:harmonicOscillator} below the Bernstein-type inequality~\eqref{eq:Bernstein} with
\begin{equation*}
 C_B(m,\lambda)
 =
 (2\delta)^{2m} \ee^{\ee/\delta^2} (m!)^2 \ee^{2\sqrt{\lambda}/\delta}
 ,\quad
 \lambda \ge 0
 ,
\end{equation*}
where $\delta > 0$ is a fixed chosen parameter, for the following domains:

\begin{itemize}

 \item generalised rectangles $\bigtimes_{j=1}^d (a_j,b_j)$ with $a_j,b_j \in \{ 0,\pm\infty \}$, $a_j < b_j$;

 \item the sector $S_{\pi/4} \subset \RR^2$;

 \item finite Cartesian products of $S_{\pi/4}$ and generalised rectangles of the above form.

\end{itemize}
This builds upon and extends the case of the whole space $\Omega = \RR^d$ studied in~\cite{BJPS18}; note that here each spectral
subspace $\Ran\EE_{H_{\RR^d}}(\lambda)$ describes certain finite linear combinations of the standard Hermite functions. The
resulting quantity $h(\lambda)$ from~\eqref{eq:BernsteinSum:intro} now reads
\begin{equation*}
 h(\lambda)
 =
 \ee^{\ee/(2\delta^2)} \ee^{\sqrt{\lambda}/\delta} \sum_{m \in \NN_0} (20\delta\norm{l}{1})^m
 .
\end{equation*}
Choosing $\delta := 1/(40\norm{l}{1})$ leads to $20\delta\norm{l}{1} = 1/2$, and consequently to 
\begin{equation*}
 \log h(\lambda)
 =
 \log 2 + \frac{\ee}{2\delta^2} + \frac{\sqrt{\lambda}}{\delta}
 =
 \log 2 + 800\ee\norm{l}{1}^2 + 40\norm{l}{1}\sqrt{\lambda}
 .
\end{equation*}
Combining this again with Lemma~\ref{lem:covering} and the asymptotic formula for $\tau_d$, we obtain from Theorem~\ref{thm:LS} the
following statement for the harmonic oscillator.

\begin{cor}\label{cor:harmonicOscillator}
 Let $H_\Omega$ be the harmonic oscillator on one of the above mentioned domains. Then, we have
 \begin{equation*}
  \norm{f}{L^2(\Omega)}^2
  \le
  \Bigl( \frac{K^d}{\gamma} \Bigr)^{Kd\rho\sqrt{\lambda} + K^2d^2\rho^2 + 2d + 10} \norm{f}{L^2(\omega)}^2
 \end{equation*}
 for all $f \in \Ran\EE_{H}(\lambda)$, $\lambda \ge 0$, and every $(\gamma,\rho)$-thick set $\omega$ in $\Omega$, where $K > 0$ is
 a universal constant. 
\end{cor}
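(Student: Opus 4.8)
The plan is to obtain Corollary~\ref{cor:harmonicOscillator} as a direct application of Theorem~\ref{thm:LS}, feeding in the Bernstein-type inequality from Proposition~\ref{prop:harmonicOscillator} and the covering from Lemma~\ref{lem:covering}, and then simplifying the resulting constant. First I would fix one of the admissible domains $\Omega$ --- which, being either an unbounded generalised rectangle anchored at the origin, the sector $S_{\pi/4}$, or a Cartesian product thereof, automatically contains a hypercube $x + (0,\rho)^d$ for every $\rho > 0$ --- and let $\omega$ be $(\gamma,\rho)$-thick in $\Omega$. By Proposition~\ref{prop:harmonicOscillator}, for any fixed $\delta > 0$ every $f \in \Ran\EE_{H_\Omega}(\lambda)$ satisfies the Bernstein-type inequality~\eqref{eq:Bernstein} with $C_B(m,\lambda) = (2\delta)^{2m}\ee^{\ee/\delta^2}(m!)^2\ee^{2\sqrt{\lambda}/\delta}$ (in particular $\Ran\EE_{H_\Omega}(\lambda) \subset W^{\infty,2}(\Omega)$), so the structural hypotheses of Theorem~\ref{thm:LS} hold.

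Next I would invoke Lemma~\ref{lem:covering} to obtain a $(2^d,\rho,l,(2d)^{-d/2})$-covering of $\Omega$ with $l = (k_1\rho,\dots,k_d\rho)$, where the $k_j > 0$ depend at most on the opening angle of $S_{\pi/4}$ and are therefore absolute constants here; thus $\norm{l}{1} \le c_0 d\rho$ and $l_1\cdots l_d/\rho^d = k_1\cdots k_d \le c_0^d$ for some universal $c_0$. Taking square roots in the Bernstein bound and choosing $\delta := 1/(40\norm{l}{1})$ turns the series defining $h(\lambda)$ in~\eqref{eq:BernsteinSum:intro} into a geometric series of ratio $1/2$, so that $h(\lambda) < \infty$ and, as recorded just before the corollary, $\log h(\lambda) = \log 2 + 800\ee\norm{l}{1}^2 + 40\norm{l}{1}\sqrt{\lambda}$. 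Substituting $\kappa = 2^d$ (so $\log\kappa/\log 2 = d$), $\eta = (2d)^{-d/2}$, and these data into~\eqref{eq:LS}, the base $24 d\tau_d l_1\cdots l_d/(\gamma\eta\rho^d)$ equals $24\cdot 2^{d/2} d^{1+d/2}\tau_d\, k_1\cdots k_d/\gamma$, which by the asymptotics $\tau_d \sim (2\pi\ee/d)^{d/2}/\sqrt{d\pi}$ (whence $2^{d/2} d^{1+d/2}\tau_d \le K_1^d$ for an absolute $K_1$) is at most $K^d/\gamma$ for a universal $K$; and the exponent $2d + 4\log h(\lambda)/\log 2 + 5$, after inserting $\norm{l}{1}\le c_0 d\rho$, is at most $Kd\rho\sqrt{\lambda} + K^2 d^2\rho^2 + 2d + 9$. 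Absorbing the remaining prefactor $\kappa/6 = 2^d/6$ into the bound (using $\gamma \le 1$) costs one more unit in the exponent, and enlarging $K$ as needed yields exactly the claimed inequality.

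I do not expect a genuine obstacle here: once Proposition~\ref{prop:harmonicOscillator} and Lemma~\ref{lem:covering} are in hand, the argument is pure bookkeeping. The only points requiring a little care are checking that the covering constants $k_j$ are truly universal for these domains (they are, the only angle occurring being the fixed $\pi/4$) and carrying out the absorption of the various polynomial-in-$d$ factors --- the $\sqrt{d/\pi}$ coming from $\tau_d$, the numerical constants $24$ and $6$, and the additive $\log 2$ terms in $\log h(\lambda)$ --- into a single universal constant $K$ and into the additive $10$ in the exponent. The substantive work, namely establishing the Bernstein-type inequality for the harmonic oscillator on these (non-product) domains, is carried out separately in Proposition~\ref{prop:harmonicOscillator} and is not part of this deduction.
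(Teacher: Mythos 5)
Your proposal is correct and follows exactly the route the paper takes: the paper derives Corollary~\ref{cor:harmonicOscillator} in the discussion immediately preceding it by inserting the Bernstein constants from Proposition~\ref{prop:harmonicOscillator} with the same choice $\delta = 1/(40\norm{l}{1})$, the covering from Lemma~\ref{lem:covering}, and the asymptotics of $\tau_d$ into Theorem~\ref{thm:LS}. Your bookkeeping of the exponent ($2d+9$ plus one unit to absorb the prefactor $2^d/6$, using $\gamma \le 1$ and $K^d/\gamma \ge 1$) matches the stated $2d+10$.
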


It is interesting to note that in~\cite{BJPS18} the authors obtain for the case $\Omega = \RR^d$ the constant
\[
 \frac{8}{\gamma}\Big(\frac{K^d}{\gamma}\Big)^{\frac{C_1(d)\rho\sqrt{\lambda}}{\log 2} +\frac{2\log((4\rho)^{d/2})C_2(d)}{\log 2}}
\] 
with $C_1(d), C_2(d) > 0$ depending on the dimension $d$. Although this is essentially compatible with the corollary above, 
it shows some discrepancies, mainly in the dependence on $\rho$, and the dependence on the dimension $d$ is not explicit. This is
due to the technical differences in the proof. Indeed, while the proof in~\cite{BJPS18} is again inspired by the work of
Kovrijkine and essentially follows the same strategy as our proof presented in Section~\ref{sec:main_proof} below, some of the
arguments are carried out differently. For example, a different definition for good and bad elements of the covering
(cf.~Section~\ref{subsec:good_and_bad} below) is used, as well as a slightly different approach to the proof of the
local estimate in Lemma~\ref{lem:local_estimate} below making use of a Sobolev embedding.

\section{Bernstein-type inequalities}\label{sec:bernstein}

In this section we discuss approaches to verify Bernstein-type inequalities of the form~\eqref{eq:Bernstein}. Some of these
approaches are direct (Section~\ref{subsec:bernstein-criterion}) and some derive such inequalities from other situations, for
instance, by taking Cartesian products (Section~\ref{subsec:Cartesian}) or by applying linear transformations
(Section~\ref{subsec:transformations}). Finally, we combine these considerations in Section~\ref{subsec:specific} to prove the
Bernstein-type inequalities for the situations mentioned in Section~\ref{subsec:applications}.

In the following, it is convenient to mostly avoid the reference to $H$ and $\lambda$ in the notations. This makes it, however,
necessary to slightly adapt the notion of a Bernstein-type inequality: We say that a function
$f \in W^{\infty,2}(\Omega) = \bigcap_{k\in\NN} W^{k,2}(\Omega)$ satisfies a~\emph{Bernstein-type inequality with respect to
$C_B \colon \NN_0 \to (0,\infty)$} if
\begin{equation*}
 \sum_{\abs{\alpha} = m} \frac{1}{\alpha!}\norm{\partial^\alpha f}{L^2(\Omega)}^2 \leq \frac{C_B(m)}{m!} \norm{f}{L^2(\Omega)}^2
 \quad\text{ for all }\quad m\in\NN_0.
\end{equation*}

\subsection{A criterion for Bernstein}\label{subsec:bernstein-criterion}

The following result applies to certain classes of functions in $W^{\infty,2}(\Omega)$ and allows us to check directly the
validity of Bernstein-type inequalities in the context of spectral subspaces for the pure Laplacian. It also provides sharp
Bernstein constants in these situations.

\begin{lemma}\label{lem:Bernstein-Criterion}
	Let $\cM \subset W^{\infty,2}(\Omega)$ be a subset that is invariant for the differential expression $-\Delta$, that is,
	$-\Delta f \in \cM$ for all $f\in \cM$. Furthermore, suppose that
	\begin{equation}\label{eq:BernsteinCriterion:cond}
	\sum_{\abs{\alpha}=n} \frac{1}{\alpha!} \sum_{j=1}^d \langle \partial_j \partial^\alpha f,
	\partial_j \partial^\alpha g \rangle_{L^2(\Omega)}
	=
	\sum_{\abs{\alpha}=n} \frac{1}{\alpha!} \langle \partial^\alpha f,(-\Delta)\partial^\alpha g \rangle_{L^2(\Omega)}
	\end{equation}
	for all $n \in \NN_0$ and all $f,g \in \cM$. Then,
	\begin{equation}\label{eq:bernstein-criterion}
	\sum_{\abs{\alpha}=m} \frac{1}{\alpha!} \langle \partial^\alpha f, \partial^\alpha g \rangle_{L^2(\Omega)}
	=
	\frac{1}{m!} \langle f,(-\Delta)^m g \rangle_{L^2(\Omega)}
	\quad \forall\, f,g\in\cM \quad \forall\, m\in\NN_0
  .
	\end{equation}	
\end{lemma}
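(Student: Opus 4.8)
The plan is to prove \eqref{eq:bernstein-criterion} by induction on $m$, using \eqref{eq:BernsteinCriterion:cond} as the engine that lets us integrate by parts one derivative at a time while staying inside $\cM$. For $m=0$ the identity is trivial, and for $m=1$ it follows by summing \eqref{eq:BernsteinCriterion:cond} over $n=0$ (the multi-index $\alpha$ ranges over the single value $\alpha = 0$), which directly gives
$\sum_{j=1}^d \langle \partial_j f, \partial_j g \rangle_{L^2(\Omega)} = \langle f, (-\Delta)g\rangle_{L^2(\Omega)}$,
and this is exactly \eqref{eq:bernstein-criterion} for $m=1$ since $\sum_{\abs{\alpha}=1}\frac1{\alpha!}\langle\partial^\alpha f,\partial^\alpha g\rangle = \sum_{j=1}^d\langle\partial_j f,\partial_j g\rangle$.

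For the inductive step, suppose \eqref{eq:bernstein-criterion} holds for some $m \ge 1$ and for \emph{all} pairs $f,g \in \cM$. The key combinatorial fact I would use is the identity
\[
 \sum_{\abs{\alpha}=m} \frac{1}{\alpha!} \sum_{j=1}^d \langle \partial_j\partial^\alpha f, \partial_j\partial^\alpha g\rangle_{L^2(\Omega)}
 =
 \sum_{\abs{\beta}=m+1} \frac{1}{\beta!} \langle \partial^\beta f, \partial^\beta g\rangle_{L^2(\Omega)}
 ,
\]
which is purely algebraic: writing $\beta = \alpha + e_j$ and using $\beta! = \alpha!\,(\alpha_j+1)$ together with the fact that each $\beta$ with $\abs{\beta}=m+1$ arises from exactly the indices $j$ with $\beta_j \ge 1$, one checks that the multiplicities work out (the standard "$\sum_j (\alpha_j+1)$ telescopes against $\abs{\beta}+1$" bookkeeping behind the multinomial theorem). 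Granting this, the right-hand side of \eqref{eq:bernstein-criterion} at level $m+1$ equals $\sum_{\abs{\alpha}=m}\frac1{\alpha!}\sum_{j=1}^d\langle\partial_j\partial^\alpha f,\partial_j\partial^\alpha g\rangle$, which by hypothesis \eqref{eq:BernsteinCriterion:cond} (applied with $n=m$) equals $\sum_{\abs{\alpha}=m}\frac1{\alpha!}\langle\partial^\alpha f,(-\Delta)\partial^\alpha g\rangle$. Now $\partial^\alpha$ and $-\Delta$ commute on $W^{\infty,2}(\Omega)$, so this is $\sum_{\abs{\alpha}=m}\frac1{\alpha!}\langle\partial^\alpha f,\partial^\alpha(-\Delta g)\rangle$, and since $-\Delta g \in \cM$ by the invariance assumption, the inductive hypothesis applied to the pair $(f, -\Delta g)$ gives $\frac1{m!}\langle f,(-\Delta)^m(-\Delta g)\rangle = \frac1{(m+1)!}\cdot(m+1)\langle f,(-\Delta)^{m+1}g\rangle$. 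Tracking the factorials carefully—the $\frac1{m!}$ from the inductive hypothesis must be reconciled with the desired $\frac1{(m+1)!}$, which is where the factor $(\alpha_j+1)$ in the combinatorial identity above must exactly account for the discrepancy—closes the induction.

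The main obstacle I expect is the bookkeeping in the combinatorial identity relating the sum over $\abs{\alpha}=m$ with an extra derivative $\partial_j$ to the sum over $\abs{\beta}=m+1$: one must be careful that the weights $1/\alpha!$ versus $1/\beta!$ and the range of $j$ combine to produce precisely $\frac{1}{m!}$ versus $\frac{1}{(m+1)!}$ with the correct integer factor. Everything else—commuting constant-coefficient derivatives, invoking $-\Delta g \in \cM$, and the base cases—is routine, and no analytic subtlety arises because all functions involved lie in $W^{\infty,2}(\Omega)$, so all the pairings and integrations by parts implicit in \eqref{eq:BernsteinCriterion:cond} are already granted by hypothesis.
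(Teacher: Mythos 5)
Your proposal is correct and follows essentially the same route as the paper: induction on $m$, with the base cases $m=0,1$, the invariance $-\Delta g\in\cM$, hypothesis \eqref{eq:BernsteinCriterion:cond} at $n=m$, and the multi-index recombination identity — the paper merely runs the same chain of equalities in the opposite direction, starting from $\frac{1}{(m+1)!}\langle f,(-\Delta)^{m+1}g\rangle$. One caveat: your displayed combinatorial identity is missing a factor of $(m+1)$ on the right-hand side (since $\sum_{j:\beta_j\ge 1}\frac{1}{(\beta-e_j)!}=\frac{\abs{\beta}}{\beta!}$), but you correctly flag in the prose that exactly this factor reconciles $\frac{1}{m!}$ with $\frac{1}{(m+1)!}$, so the intended argument closes.
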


\begin{proof}
	We proceed by induction. The case $m=0$ is clear, and the case $m=1$ corresponds to the hypothesis for $n=0$. Suppose that the
	claim holds for some $m \in \NN$. Since $-\Delta g \in \cM$ by hypothesis, we then have
	\begin{align*}
	\frac{1}{(m+1)!} \langle f,(-\Delta)^{m+1} g \rangle_{L^2(\Omega)}
	&=
	\frac{1}{(m+1)!} \langle f,(-\Delta)^m (-\Delta g) \rangle_{L^2(\Omega)}\\
	&=
	\frac{1}{m+1} \sum_{\abs{\beta}=m} \frac{1}{\beta!} \langle \partial^\beta f,\partial^\beta(-\Delta g) \rangle_{L^2(\Omega)}\\
	&=
	\frac{1}{m+1} \sum_{\abs{\beta}=m} \frac{1}{\beta!} \langle \partial^\beta f,(-\Delta)\partial^\beta g \rangle_{L^2(\Omega)}\\
	&=
	\frac{1}{m+1} \sum_{\abs{\beta}=m} \frac{1}{\beta!} \sum_{j=1}^d \langle \partial_j\partial^\beta f,\partial_j\partial^\beta g
	\rangle_{L^2(\Omega)}\\
	&=
	\sum_{\abs{\alpha}=m+1} \frac{1}{\alpha!} \langle \partial^\alpha f,\partial^\alpha g \rangle_{L^2(\Omega)},
	\end{align*}
	which proves the claim.
\end{proof}%

It is worth to note that the formulation of the above statement avoids any regularity assumption on the domain $\Omega$. If,
however, $\Omega$ is sufficiently regular, the hypothesis~\eqref{eq:BernsteinCriterion:cond} of
Lemma~\ref{lem:Bernstein-Criterion} may be verified by taking care of suitable boundary integrals:

\begin{rmk}\label{rmk:Bernstein}
	If $\Omega$ admits an integration by parts formula (for example, if $\Omega$ has Lipschitz boundary),
	equation~\eqref{eq:BernsteinCriterion:cond} is, by Green's formula, equivalent to
	\begin{equation}\label{eq:boundary-integration}
		\sum_{\abs{\alpha}=m} \frac{1}{\alpha!}
		\int_{\partial\Omega}\partial^\alpha f(x)\frac{\partial}{\partial\nu}\partial^\alpha \overline{g(x)}
		\dd\sigma(x)=0 
	\end{equation} 
	for all $m\in\NN_0$ and all $f,g \in \cM$.
\end{rmk}

\subsection{Cartesian products}\label{subsec:Cartesian}

In this subsection, we discuss situations in which relation~\eqref{eq:bernstein-criterion} can be lifted to Cartesian products.

Consider $\Omega = \Omega_1 \times \Omega_2 \subset \RR^d$ with domains $\Omega_1 \subset \RR^{d_1}$ and
$\Omega_2 \subset \RR^{d_2}$, $d_1 + d_2 = d$. We assume that at least one of the two Laplacians $-\Delta_{\Omega_1}$ and
$-\Delta_{\Omega_2}$ has purely discrete spectrum, say $-\Delta_{\Omega_1}$. This holds, for instance, if $\Omega_1$ is a bounded
Lipschitz domain.

Using the language of tensor products (see, e.g.,~\cite[Section~7.5]{Schm12} and also~\cite[Section~8.5]{Wei80}), the product
structure of $\Omega$ guarantees that
\begin{equation}\label{eq:tensor}
 -\Delta_\Omega = (-\Delta_{\Omega_1}) \otimes I_2 + I_1 \otimes (-\Delta_{\Omega_2}),
\end{equation}
where $I_j$ denotes the identity operator on $L^2(\Omega_j)$, $j\in\{1,2\}$. Set
\begin{equation*}
 \cM := \bigcup_{\lambda \ge 0} \Ran\EE_{-\Delta_{\Omega}}(\lambda)
 \quad\text{ and }\
 \cM_j := \bigcup_{\lambda \ge 0} \Ran\EE_{-\Delta_{\Omega_j}}(\lambda), \ j \in \{1,2\}
 .
\end{equation*}

With this notational setup, we obtain the following result, a variant of which (for $m=2$) has previously been discussed
in~\cite[Proposition~4.1]{Seel20}; cf.~also Remark~\ref{rmk:cartesian} below.

\begin{lemma}\label{lemma:bernstein-cartesian-product}
  If for $j \in \{1,2\}$ we have $\cM_j \subset W^{\infty,2}(\Omega_j)$ with
	\begin{equation*}
	\sum_{\abs{\alpha} = m} \frac{1}{\alpha!} \langle \partial^\alpha f, \partial^\alpha g \rangle_{L^2(\Omega_j)}
	=
	\frac{1}{m!} \langle f , (-\Delta_{\Omega_j})^m g \rangle_{L^2(\Omega_j)}
	\quad \forall\, f,g\in\cM_j \quad \forall\, m\in\NN_0,
	\end{equation*}
	then $\cM \subset W^{\infty,2}(\Omega)$ and
	\begin{equation*}
	\sum_{\abs{\alpha} = m} \frac{1}{\alpha!} \langle \partial^\alpha f, \partial^\alpha g \rangle_{L^2(\Omega)}
	=
	\frac{1}{m!} \langle f , (-\Delta_\Omega)^m g \rangle_{L^2(\Omega)}
	\quad \forall\,f,g\in\cM \quad\forall\,m\in\NN_0.
	\end{equation*}
\end{lemma}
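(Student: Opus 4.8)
The plan is to use the tensor product structure~\eqref{eq:tensor} together with the multinomial expansion of the iterated Laplacian $(-\Delta_\Omega)^m$. First I would reduce to the case of finite linear combinations of product functions $f = u \otimes v$ with $u \in \Ran\EE_{-\Delta_{\Omega_1}}(\mu)$ and $v \in \Ran\EE_{-\Delta_{\Omega_2}}(\nu)$: since $-\Delta_{\Omega_1}$ has purely discrete spectrum, each $\Ran\EE_{-\Delta_\Omega}(\lambda)$ is spanned by eigenfunctions of $-\Delta_{\Omega_1}$ tensored with functions in suitable spectral subspaces of $-\Delta_{\Omega_2}$, and such product functions already lie in $W^{\infty,2}(\Omega_1) \hat\otimes W^{\infty,2}(\Omega_2) \subset W^{\infty,2}(\Omega)$ by the hypothesis $\cM_j \subset W^{\infty,2}(\Omega_j)$; bilinearity of both sides of the claimed identity then lets me extend from pure tensors to all of $\cM$. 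The membership $\cM \subset W^{\infty,2}(\Omega)$ follows along the way, once one checks that the relevant spectral subspaces are finite sums of such tensor products.

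Next, for a pure tensor $f = u_1 \otimes u_2$, $g = v_1 \otimes v_2$, I would compute both sides directly. On the left, a multi-index $\alpha$ for $\Omega = \Omega_1 \times \Omega_2$ splits as $\alpha = (\alpha^{(1)}, \alpha^{(2)})$ with $\abs{\alpha} = \abs{\alpha^{(1)}} + \abs{\alpha^{(2)}}$, $\alpha! = \alpha^{(1)}! \, \alpha^{(2)}!$, and $\partial^\alpha(u_1\otimes u_2) = (\partial^{\alpha^{(1)}} u_1) \otimes (\partial^{\alpha^{(2)}} u_2)$, so that
\begin{equation*}
 \sum_{\abs{\alpha}=m} \frac{1}{\alpha!} \langle \partial^\alpha f, \partial^\alpha g \rangle_{L^2(\Omega)}
 =
 \sum_{p+q=m} \Bigl( \sum_{\abs{\beta}=p} \frac{\langle \partial^\beta u_1, \partial^\beta v_1\rangle_{L^2(\Omega_1)}}{\beta!} \Bigr)
 \Bigl( \sum_{\abs{\delta}=q} \frac{\langle \partial^\delta u_2, \partial^\delta v_2\rangle_{L^2(\Omega_2)}}{\delta!} \Bigr).
\end{equation*}
Applying the hypothesis on each factor turns this into $\sum_{p+q=m} \frac{1}{p!\,q!} \langle u_1, (-\Delta_{\Omega_1})^p v_1\rangle \langle u_2, (-\Delta_{\Omega_2})^q v_2 \rangle$. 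On the right, the binomial theorem applied to~\eqref{eq:tensor} — the two summands commute as they act on different tensor factors — gives $(-\Delta_\Omega)^m = \sum_{p+q=m} \binom{m}{p} (-\Delta_{\Omega_1})^p \otimes (-\Delta_{\Omega_2})^q$, hence $\frac{1}{m!}\langle f, (-\Delta_\Omega)^m g\rangle = \sum_{p+q=m} \frac{1}{p!\,q!} \langle u_1, (-\Delta_{\Omega_1})^p v_1\rangle \langle u_2, (-\Delta_{\Omega_2})^q v_2\rangle$, which matches the left-hand side.

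The main technical obstacle is the justification that $\Ran\EE_{-\Delta_\Omega}(\lambda)$ is genuinely spanned (in an $L^2$, and in fact $W^{\infty,2}$, sense) by the pure tensors $u \otimes v$ with $u$ an eigenfunction of $-\Delta_{\Omega_1}$ and $v \in \Ran\EE_{-\Delta_{\Omega_2}}(\lambda - \mu_u)$, together with the closedness needed to pass from finite linear combinations of tensors to the whole spectral subspace while preserving the $W^{\infty,2}$-membership. This is where the discreteness of the spectrum of $-\Delta_{\Omega_1}$ is essential: it forces each $\Ran\EE_{-\Delta_\Omega}(\lambda)$ to be an (at most countable, in fact finite once one also knows $\Ran\EE_{-\Delta_{\Omega_2}}(\mu)$ enters only for $\mu \le \lambda$) orthogonal sum of spaces of the form $\Span\{e_k\} \otimes \Ran\EE_{-\Delta_{\Omega_2}}(\lambda - \mu_k)$, $\mu_k$ the eigenvalues of $-\Delta_{\Omega_1}$; since the identity~\eqref{eq:bernstein-criterion} is sesquilinear and both sides are continuous on each such finite-dimensional-in-the-first-factor block, it suffices to verify it on pure tensors, which was done above. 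One should also record that $e_k \in W^{\infty,2}(\Omega_1)$ because $e_k \in \cM_1$ by hypothesis, so all derivatives appearing are well defined.
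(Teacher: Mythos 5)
Your proposal is correct and follows essentially the same route as the paper: verify the identity on elementary tensors by splitting multi-indices and expanding $(-\Delta_\Omega)^m$ binomially via the tensor structure~\eqref{eq:tensor}, then use the purely discrete spectrum of $-\Delta_{\Omega_1}$ to write each element of $\Ran\EE_{-\Delta_\Omega}(\lambda)$ as a \emph{finite} sum $\sum_{\lambda_k\le\lambda}\phi_k\otimes\psi_k$ with $\psi_k\in\Ran\EE_{-\Delta_{\Omega_2}}(\lambda-\lambda_k)$, and conclude by sesquilinearity. The only cosmetic difference is that the paper makes the decomposition explicit through $\EE_{-\Delta_\Omega}(\lambda)=\sum_{\mu\le\lambda}P_\mu\otimes\EE_{-\Delta_{\Omega_2}}(\lambda-\mu)$, which renders the closure argument you worry about unnecessary since the sum is finite.
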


\begin{proof}
 We first consider functions $f = \phi \otimes \psi, g = \tilde{\phi} \otimes \tilde{\psi} \in W^{\infty,2}(\Omega)$ with
 $\phi,\tilde{\phi} \in \cM_1$ and $\psi,\tilde{\psi} \in \cM_2$. For all $m \in \NN_0$, we then have
 \begin{align*}
  \sum_{\abs{\alpha} = m} \frac{1}{\alpha!} \langle \partial^\alpha f , \partial^\alpha g &\rangle_{L^2(\Omega)}
  =
  \sum_{k=0}^m
  \sum_{\abs{\beta}=k} \frac{1}{\beta!} \langle \partial^\beta \phi , \partial^\beta \tilde{\phi} \rangle_{L^2(\Omega_1)}
  \sum_{\abs{\gamma}=m-k} \frac{1}{\gamma!} \langle \partial^\gamma \psi , \partial^\gamma \tilde{\psi} \rangle_{L^2(\Omega_2)}\\
  &=
  \sum_{k=0}^m \frac{1}{k!} \langle \phi , (-\Delta_{\Omega_1})^k \tilde{\phi} \rangle_{L^2(\Omega_1)}
  \frac{1}{(m-k)!} \langle \psi , (-\Delta_{\Omega_2})^{m-k} \tilde{\psi}\rangle_{L^2(\Omega_2)}\\
  &=
  \frac{1}{m!} \sum_{k=0}^m \binom{m}{k} \langle f , (-\Delta_{\Omega_1})^k \tilde{\phi} \otimes
  (-\Delta_{\Omega_2})^{m-k} \tilde{\psi} \rangle_{L^2(\Omega)}\\
  &=
  \frac{1}{m!} \langle f , (-\Delta_{\Omega})^m g \rangle_{L^2(\Omega)}
  ,
 \end{align*}
 where for the last equality we have taken into account~\eqref{eq:tensor} and~\cite[Eq.~(7.44)]{Schm12}.

 In order to conclude the proof, by sesquilinearity it now suffices to show that each function in $\cM$ is a finite sum of
 elementary tensors of the above form. To this end, let $f \in \EE_{-\Delta_\Omega}(\lambda)$ for some $\lambda \ge 0$, and
 recall that $-\Delta_{\Omega_1}$ is assumed to have purely discrete spectrum. Due to the tensor structure~\eqref{eq:tensor} of
 $-\Delta_\Omega$, we then have
 \begin{equation}\label{eq:tensorSpecMeas}
  \EE_{-\Delta_{\Omega}}(\lambda)= \sum_{\mu \leq \lambda} P_{\mu} \otimes \EE_{-\Delta_{\Omega_2}}(\lambda-\mu),
 \end{equation}
 where $P_{\mu}$ is the orthogonal projection onto the kernel of $-\Delta_{\Omega_1}-\mu$, see~\cite[Theorem 8.34 and
 Exercise~8.21]{Wei80}. Writing $f(x,y) = \sum_k \langle f(\cdot,y) , \phi_k \rangle_{L^2(\Omega_1)} \phi_k(x)$ with an
 orthonormal family $(\phi_k)_k$ of eigenfunctions for $-\Delta_{\Omega_1}$ with corresponding eigenvalues $\lambda_k$, we then
 infer from~\eqref{eq:tensorSpecMeas} that
 \begin{equation*}
   f = \sum_{\lambda_k \le \lambda} \phi_k \otimes \psi_k
   \quad\text{ with suitable }\
   \psi_k \in \Ran\EE_{-\Delta_{\Omega_2}}(\lambda-\lambda_k) \subset \cM_2
   ,
 \end{equation*}
 which completes the proof.
\end{proof}%

\begin{rmk}\label{rmk:cartesian}
 Note that for $m = 2$ the identities in Lemma~\ref{lemma:bernstein-cartesian-product} can be written as
 \begin{equation*}
 	\sum_{\abs{\alpha}=2} \frac{1}{\alpha!} \langle \partial^\alpha f , \partial^\alpha g \rangle_{L^2(\Omega)}
 	=
 	\frac{1}{2} \langle (-\Delta_\Omega)f , (-\Delta_\Omega)g \rangle_{L^2(\Omega)}
 \end{equation*}
 for $f,g \in \cM$, and analogously for $-\Delta_{\Omega_j}$, which clearly extend to all elements $f,g$ from the respective
 operator domains by approximation. In particular, the domain of $\Delta_\Omega$ belongs to $W^{2,2}(\Omega)$, and the Sobolev
 norm of second order for elements of the domain can be estimated by the graph norm of $\Delta_\Omega$. This is the essence
 of~\cite[Proposition~4.1]{Seel20}. The corresponding proof there is simpler since one does not have to revert to the subspaces
 $\cM_j \subset W^{\infty,2}(\Omega_j)$ and $\cM \subset W^{\infty,2}(\Omega)$, respectively. As a consequence, one does also not
 have to rely on purely discrete spectrum of one of the $-\Delta_{\Omega_j}$.
\end{rmk}

\subsection{Transformation of Bernstein sums}\label{subsec:transformations}

In the following, we identify a matrix in $\RR^{d\times d}$ with the corresponding induced linear mapping on $\RR^d$ with respect
to the standard basis.

\begin{lemma}\label{lem:scaling}
	Let $P\in\RR^{d\times d}$ be symmetric and positive definite, and suppose that the function
	$f \circ P \in W^{\infty,2}(P^{-1}(\Omega))$ satisfies a Bernstein-type inequality with respect to
	$\tilde{C}_B \colon \NN_0 \to (0,\infty)$. Then, $f \in W^{\infty,2}(\Omega)$ satisfies a Bernstein-type inequality with respect
	to 	$C_B \colon \NN_0 \to (0,\infty)$ given by $C_B(m) = \tilde{C}_B(m) / p_{\min}^{2m}$, where $p_{\min}$ is the smallest
	eigenvalue of $P$.
\end{lemma}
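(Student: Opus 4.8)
The plan is to reformulate the Bernstein sum in terms of the full array of $m$-th order partial derivatives and to track how that array transforms under the linear substitution $x = Py$.

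First I would record a purely combinatorial reformulation. Since weak mixed partials of any order commute for $h \in W^{\infty,2}(\Omega)$, for every $x$ we have
\[
 \sum_{i_1,\dots,i_m=1}^d \abs{\partial_{i_1}\cdots\partial_{i_m} h(x)}^2
 =
 \sum_{\abs{\alpha}=m} \frac{m!}{\alpha!}\, \abs{\partial^\alpha h(x)}^2 ,
\]
because each multi-index $\alpha$ with $\abs{\alpha}=m$ arises from exactly $m!/\alpha!$ ordered tuples $(i_1,\dots,i_m)\in\{1,\dots,d\}^m$, each giving $\partial_{i_1}\cdots\partial_{i_m}h=\partial^\alpha h$. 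Integrating over $\Omega$, the Bernstein sum $\sum_{\abs{\alpha}=m}\tfrac{1}{\alpha!}\norm{\partial^\alpha h}{L^2(\Omega)}^2$ equals $\tfrac{1}{m!}\,T_m(h;\Omega)$ with $T_m(h;\Omega):=\sum_{i_1,\dots,i_m=1}^d\norm{\partial_{i_1}\cdots\partial_{i_m}h}{L^2(\Omega)}^2$, so a Bernstein-type inequality for $h$ with constant $c(m)$ is equivalent to $T_m(h;\Omega)\le c(m)\norm{h}{L^2(\Omega)}^2$. Also, $f=(f\circ P)\circ P^{-1}$ is obtained from an element of $W^{\infty,2}(P^{-1}(\Omega))$ by the linear change of variables $P^{-1}$, which maps each $W^{k,2}$ onto $W^{k,2}$; hence $f\in W^{\infty,2}(\Omega)$.

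Next I would apply the chain rule to $g:=f\circ P$. Using that $P$ is symmetric, one gets $\partial_{i_1}\cdots\partial_{i_m}g(y)=\sum_{k_1,\dots,k_m=1}^d P_{i_1 k_1}\cdots P_{i_m k_m}(\partial_{k_1}\cdots\partial_{k_m}f)(Py)$, i.e.\ the $m$-th derivative array of $g$ at $y$ is $P^{\otimes m}$ applied to the $m$-th derivative array of $f$ at $Py$ on $(\RR^d)^{\otimes m}$. Since $P$ is symmetric positive definite, $P^{\otimes m}$ is self-adjoint positive definite with smallest eigenvalue $p_{\min}^m$, whence pointwise
\[
 \sum_{k_1,\dots,k_m=1}^d \abs{(\partial_{k_1}\cdots\partial_{k_m}f)(Py)}^2
 \le
 p_{\min}^{-2m} \sum_{i_1,\dots,i_m=1}^d \abs{\partial_{i_1}\cdots\partial_{i_m}g(y)}^2 .
\]

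Finally I would change variables $x=Py$, so that $\dd x=\det P\,\dd y$ (with $\det P>0$). Integrating the previous pointwise inequality over $P^{-1}(\Omega)$ yields $T_m(f;\Omega)\le p_{\min}^{-2m}(\det P)\,T_m(g;P^{-1}(\Omega))$, while for $m=0$ the same substitution gives $\norm{g}{L^2(P^{-1}(\Omega))}^2=(\det P)^{-1}\norm{f}{L^2(\Omega)}^2$. Inserting the Bernstein-type inequality for $g=f\circ P$ in the form $T_m(g;P^{-1}(\Omega))\le\tilde C_B(m)\norm{g}{L^2(P^{-1}(\Omega))}^2$, the two factors $\det P$ cancel and we obtain $T_m(f;\Omega)\le \tilde C_B(m)\,p_{\min}^{-2m}\,\norm{f}{L^2(\Omega)}^2$, which by the first step is precisely the Bernstein-type inequality for $f$ with $C_B(m)=\tilde C_B(m)/p_{\min}^{2m}$. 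The only genuinely delicate points are the bookkeeping in the first two steps — the combinatorial identity relating the $1/\alpha!$-weighted sum to the sum over all ordered index tuples, and the $P^{\otimes m}$-form of the chain rule together with its eigenvalue bound; once these are in place the remaining argument is a routine change of variables with matching Jacobian factors.
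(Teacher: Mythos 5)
Your proof is correct, but it takes a genuinely different route to the key transformation inequality than the paper does. Both arguments reduce to showing
\[
 \sum_{\abs{\alpha}=m}\frac{1}{\alpha!}\norm{\partial^\alpha f}{L^2(\Omega)}^2
 \le
 \frac{\det P}{p_{\min}^{2m}}\sum_{\abs{\alpha}=m}\frac{1}{\alpha!}\norm{\partial^\alpha (f\circ P)}{L^2(P^{-1}(\Omega))}^2 ,
\]
after which inserting the Bernstein-type inequality for $f\circ P$ and the change-of-variables identity $\norm{f\circ P}{L^2(P^{-1}(\Omega))}^2=(\det P)^{-1}\norm{f}{L^2(\Omega)}^2$ finishes the proof in both cases. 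The paper obtains this inequality (its Lemma~\ref{lemma:derivative-transformation}\,(b)) by diagonalising $P=UDU^T$ and treating the two factors separately: exact invariance of the weighted Bernstein sum under the orthogonal part (Lemma~\ref{lemma:derivative-transformation}\,(a), which in turn rests on the inductive basis-change identity of Lemma~\ref{lem:partial_der}), plus an elementary estimate $p^{2\alpha}\ge p_{\min}^{2m}$ for the diagonal part. You instead identify $m!\sum_{\abs{\alpha}=m}\frac{1}{\alpha!}\abs{\partial^\alpha h}^2$ as the squared Hilbert--Schmidt norm of the full $m$-th derivative array, observe that the chain rule acts on this array as $P^{\otimes m}$, and invoke the eigenvalue bound $p_{\min}^m$ for $P^{\otimes m}$; this handles the general symmetric positive definite $P$ in one stroke, with no diagonalisation and no separate orthogonal-invariance lemma. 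Your reformulation in fact explains conceptually why the paper's Lemma~\ref{lem:partial_der} holds (the tensor norm is independent of the orthonormal basis), at the price of the multinomial bookkeeping and the tensor-product formalism; the paper's route is more pedestrian but reuses Lemma~\ref{lem:partial_der}, which it needs elsewhere anyway. All the individual steps you flag as delicate (the $m!/\alpha!$ count of ordered tuples, the symmetric form of the chain rule, the spectrum of $P^{\otimes m}$, and the matching Jacobian factors) check out.
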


\begin{proof}
	It follows from Lemma~\ref{lemma:derivative-transformation}\,(b) that for each $m \in \NN_0$ we have
	\begin{align*}
		\sum_{\abs{\alpha}=m}\frac{1}{\alpha!}\norm{\partial^\alpha f}{L^2(\Omega)}^2  
		&\leq
		\frac{\det P}{p_{\min}^{2m}} \sum_{\abs{\alpha}=m}\frac{1}{\alpha!}	\norm{\partial^\alpha (f \circ P)}{L^2(P^{-1}(\Omega))}^2\\
	 	&\leq
	 	\frac{\tilde{C}_B(m)\det P}{p_{\min}^{2m}m!} \norm{f \circ P}{L^2(P^{-1}(\Omega))}^2
		=
		\frac{\tilde{C}_B(m)}{p_{\min}^{2m}m!}\norm{f}{L^2( \Omega)}^2
		.\qedhere
	\end{align*}
\end{proof}%

We now consider the extension of functions by single or multiple reflections:

Let $M \colon \RR^d \to \RR^d$ be the reflection with respect to any hyperplane in $\RR^d$. Let further
$\Omega,\tilde{\Omega} \subset \RR^d$ be domains such that $\tilde{\Omega}$ is symmetric with respect to $M$ and $\Omega$ belongs
to one of the two (open) half-spaces defined by the reflection.
We define the extension operator $X \colon L^2(\Omega) \to L^2(\tilde{\Omega}) = L^2(\Omega) \oplus L^2(M(\Omega))$ by
\begin{equation*}
	Xf
	:=
	f \oplus \mu(f \circ M)
\end{equation*}
with $\mu = -1$ for an antisymmetric and $\mu = 1$ for a symmetric reflection. We now immediately obtain the following result.

\begin{lemma}\label{lemma:singeReflection}
	Let $M$, $\Omega$, $\tilde{\Omega}$, and $X$ be as above, and let $f,g \in L^2(\Omega)$. If both
	$Xf, Xg \in W^{\infty,2}(\tilde{\Omega})$,	then also $f,g \in W^{\infty,2}(\Omega)$, and for all $m \in \NN_0$ we have
	\begin{equation*}
		\sum_{\abs{\alpha}=m} \frac{1}{\alpha!} \langle \partial^\alpha(Xf) , \partial^\alpha(Xg) \rangle_{L^2(\tilde{\Omega})}
		=
		2 \sum_{\abs{\alpha}=m} \frac{1}{\alpha!} \langle \partial^\alpha f , \partial^\alpha g \rangle_{L^2(\Omega)}
		.
	\end{equation*}
	In particular, if $Xf \in W^{\infty,2}(\tilde{\Omega})$ satisfies a Bernstein-type inequality with respect to
	$C_B \colon \NN_0 \to (0,\infty)$, then also $f \in W^{\infty,2}(\Omega)$ does.
\end{lemma}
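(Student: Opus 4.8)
The plan is to reduce everything to the orthogonal invariance of the ``Bernstein bilinear form''
\[
 B_m^U(u,v) := \sum_{\abs{\alpha}=m} \frac{1}{\alpha!} \langle \partial^\alpha u , \partial^\alpha v \rangle_{L^2(U)},
 \qquad m \in \NN_0,
\]
combined with the fact that the mirror hyperplane of $M$ is a Lebesgue null set. First I would record the easy parts. The domain $\tilde\Omega$ equals, up to its intersection with the mirror hyperplane (a null set), the disjoint union $\Omega \cup M(\Omega)$, consistently with the identification $L^2(\tilde\Omega) = L^2(\Omega) \oplus L^2(M(\Omega))$ used in defining $X$. Since distributional derivatives localise to open subsets, $Xf \in W^{\infty,2}(\tilde\Omega)$ forces $f = (Xf)|_\Omega \in W^{\infty,2}(\Omega)$ and $\mu\,(f\circ M) = (Xf)|_{M(\Omega)} \in W^{\infty,2}(M(\Omega))$, with $\partial^\alpha(Xf)$ equal to $\partial^\alpha f$ on $\Omega$ and to $\mu\,\partial^\alpha(f\circ M)$ on $M(\Omega)$; the same holds for $g$. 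This already yields the regularity assertion $f,g \in W^{\infty,2}(\Omega)$.

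Next I would split the inner product over $\tilde\Omega$ along the orthogonal decomposition and insert the above description of the two pieces of $\partial^\alpha(Xf)$ and $\partial^\alpha(Xg)$, obtaining for every $m$
\[
 B_m^{\tilde\Omega}(Xf,Xg)
 =
 B_m^{\Omega}(f,g) + \mu^2\, B_m^{M(\Omega)}(f\circ M, g\circ M).
\]
The crux is then the identity $B_m^{M(\Omega)}(f\circ M, g\circ M) = B_m^{\Omega}(f,g)$. This is where I would invoke Lemma~\ref{lemma:derivative-transformation} applied to the linear bijection $M$, or argue directly: $M$ is orthogonal with $\abs{\det M} = 1$ and $M^{-1} = M$; the symmetric $m$-tensor of $m$-th order partial derivatives transforms under precomposition with $M$ by the orthogonal operator $M^{\otimes m}$, its full Euclidean contraction against the corresponding tensor for $g$ equals $m!\,B_m$ pointwise, hence is left invariant, and the substitution $y = Mx$ with unit Jacobian matches the integral over $M(\Omega)$ with the one over $\Omega$. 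Since $\mu^2 = 1$, this gives $B_m^{\tilde\Omega}(Xf,Xg) = 2\,B_m^{\Omega}(f,g)$, the claimed identity.

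Finally, for the ``in particular'' part I would set $g = f$ and compare the case $m$ with the case $m=0$, the latter giving $\norm{Xf}{L^2(\tilde\Omega)}^2 = 2\norm{f}{L^2(\Omega)}^2$: if $Xf$ satisfies the Bernstein-type inequality with constant $C_B(m)$, then
\[
 2\sum_{\abs{\alpha}=m}\frac{1}{\alpha!}\norm{\partial^\alpha f}{L^2(\Omega)}^2
 = B_m^{\tilde\Omega}(Xf,Xf)
 \le \frac{C_B(m)}{m!}\norm{Xf}{L^2(\tilde\Omega)}^2
 = \frac{2\,C_B(m)}{m!}\norm{f}{L^2(\Omega)}^2,
\]
and the factors $2$ cancel. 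The only step that is more than bookkeeping is the orthogonal invariance of $B_m$, and even that is routine once one passes to the tensor formulation of higher derivatives; I expect it to be handled cleanly by the appendix lemma, the reflection being a special isometric linear change of variables with unit Jacobian.
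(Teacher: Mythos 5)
Your proposal is correct and follows essentially the same route as the paper: restriction to the two half-domains for the regularity claim, the orthogonal decomposition of the inner product over $\tilde\Omega$, the change of variables via Lemma~\ref{lemma:derivative-transformation}\,(a) applied to the orthogonal map $M$ (with $\mu^2=1$), and the cancellation of the factors of $2$ for the Bernstein inequality. No gaps.
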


\begin{proof}
	Since $f = (Xf)|_\Omega$, we clearly have $f \in W^{\infty,2}(\Omega)$ if $Xf \in W^{\infty,2}(\tilde{\Omega})$, and the same
	holds for $g$ instead of $f$. It now follows from part~(a) of Lemma~\ref{lemma:derivative-transformation} that
	\begin{equation*}	
		\sum_{\abs{\alpha}=m} \frac{1}{\alpha!} \langle \partial^\alpha (f\circ M) , \partial^\alpha (g\circ M) \rangle_
		{L^2(M(\Omega))}
		=
		\sum_{\abs{\alpha}=m} \frac{1}{\alpha!} \langle \partial^\alpha f , \partial^\alpha g \rangle_{L^2(\Omega)}
	\end{equation*}
	for all $m \in \NN_0$. In light of $\partial^\alpha (Xf) = \partial^\alpha f \oplus (\partial^\alpha(f\circ M))$ for
	all $\alpha \in \NN_0^d$, this proves the stated identity.

	Finally, if $Xf \in W^{\infty,2}(\tilde{\Omega})$ satisfies a Bernstein-type inequality with respect to
	$C_B \colon \NN_0 \to (0,\infty)$, then for all $m \in \NN_0$ we have
	\begin{align*}
		\sum_{\abs{\alpha}=m}\frac{1}{\alpha!} \norm{\partial^\alpha f}{L^2(\Omega)}^2
		&= 
		\frac{1}{2}\sum_{\abs{\alpha}=m}\frac{1}{\alpha!} \norm{\partial^\alpha (Xf)}{L^2(\tilde \Omega)}^2\\
		&\leq
		\frac{1}{2} \frac{C_B(m)}{m!}\norm{Xf}{L^2(\tilde \Omega)}^2 = \frac{C_B(m)}{m!}\norm{f}{L^2(\Omega)}^2
		, 
	\end{align*}
	which completes the proof.
\end{proof}%

As an example of extensions by multiple reflections, we consider the successive reflection of a sector
$S_\theta = \{ (x_1,x_2) \in (0,\infty)^2 \colon x_2 < x_1\tan\theta \}\subset \RR^2$ of angle $\theta = \pi / n$, $n \in \NN$,
$n \geq 3$, to $\RR^2$:

Let $R \colon \RR^2 \to \RR^2$ be the rotation by angle $\theta = \pi / n$ around the origin, and let $M \colon \RR^2 \to \RR^2$
be the reflection with respect to the second coordinate. Define $M_k \colon \RR^2 \to \RR^2$ for $k = 1, \dots, 2n-1$ by
$M_k := R^k M R^{-k}$ and set $K_k := M_k \circ \dots \circ M_1$; note that $M_k$ represents the reflection with respect to
$R^k(\RR \times \{0\})$. With the identity
$L^2(\RR^2) = L^2(S_\theta) \oplus L^2(K_1(S_\theta)) \oplus \dots \oplus L^2(K_{2n-1}(S_\theta))$, we define the extension
operator $X \colon L^2(S_\theta) \to L^2(\RR^2)$ by
\begin{equation}\label{eq:multipleReflection}
	Xf
	:=
	f \oplus \mu(f \circ K_1^{-1}) \oplus \dots \oplus \mu^{2n-1}(f \circ K_{2n-1}^{-1})
\end{equation}
with $\mu = -1$ for antisymmetric and $\mu = 1$ for symmetric reflections. The following result formulates an analogue of
Lemma~\ref{lemma:singeReflection} in this situation. Its proof is very similar to the one of the latter (with each $K_k$ playing
the role of $M$) and is hence omitted here.

\begin{lemma}\label{lemma:multipleReflection}
	Let $X \colon L^2(S_\theta) \to L^2(\RR^2)$ be defined as in~\eqref{eq:multipleReflection}, and let $f,g \in L^2(S_\theta)$. If
	both $Xf,Xg \in W^{\infty,2}(\RR^2)$, then also $f,g \in W^{\infty,2}(S_\theta)$, and for all $m \in \NN_0$ we have
	\begin{equation*}
		\sum_{\abs{\alpha}=m} \frac{1}{\alpha!} \langle \partial^\alpha (Xf) , \partial^\alpha (Xg) \rangle_{L^2(\RR^2)}
		=
		2n \sum_{\abs{\alpha}=m} \frac{1}{\alpha!} \langle \partial^\alpha f , \partial^\alpha g \rangle_{L^2(S_\theta)}
		.
	\end{equation*}
\end{lemma}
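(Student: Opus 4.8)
The plan is to transcribe the proof of Lemma~\ref{lemma:singeReflection} almost verbatim, with each $K_k$ playing the role of the single reflection $M$ there. The point that makes this work is that every $K_k = M_k \circ \dots \circ M_1$ is an \emph{orthogonal} linear map of $\RR^2$, being a finite product of hyperplane reflections, so that the substitution $x \mapsto K_k x$ has Jacobian $1$ and — in contrast with the positive-definite rescaling in Lemma~\ref{lem:scaling}, where the factors $\det P$ and $p_{\min}$ appear — leaves the quadratic form $\sum_{\abs\alpha = m}\frac{1}{\alpha!}\abs{\partial^\alpha(\,\cdot\,)}^2$ invariant. Concretely, Lemma~\ref{lemma:derivative-transformation}\,(a) applied to $K_k$ gives
\[
 \sum_{\abs{\alpha}=m} \frac{1}{\alpha!} \langle \partial^\alpha(f\circ K_k^{-1}), \partial^\alpha(g\circ K_k^{-1}) \rangle_{L^2(K_k(S_\theta))}
 =
 \sum_{\abs{\alpha}=m} \frac{1}{\alpha!} \langle \partial^\alpha f, \partial^\alpha g \rangle_{L^2(S_\theta)}
\]
for each index $k$, with the convention $K_0 = \id$.

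First I would record that $f = (Xf)|_{S_\theta}$ and $g = (Xg)|_{S_\theta}$, so that $f, g \in W^{\infty,2}(S_\theta)$ whenever $Xf, Xg \in W^{\infty,2}(\RR^2)$; likewise, up to the sign $\mu^k = \pm 1$, the $k$-th component of $Xf$ is the restriction $(Xf)|_{K_k(S_\theta)}$, hence lies in $W^{\infty,2}(K_k(S_\theta))$, so all derivatives below are well defined. Using the direct sum decomposition $L^2(\RR^2) = \bigoplus_{k=0}^{2n-1} L^2(K_k(S_\theta))$ underlying~\eqref{eq:multipleReflection} together with $\partial^\alpha(Xf) = \bigoplus_{k=0}^{2n-1} \mu^k\,\partial^\alpha(f\circ K_k^{-1})$, one expands
\[
 \sum_{\abs{\alpha}=m} \frac{1}{\alpha!} \langle \partial^\alpha(Xf), \partial^\alpha(Xg) \rangle_{L^2(\RR^2)}
 =
 \sum_{k=0}^{2n-1} \mu^{2k} \sum_{\abs{\alpha}=m} \frac{1}{\alpha!} \langle \partial^\alpha(f\circ K_k^{-1}), \partial^\alpha(g\circ K_k^{-1}) \rangle_{L^2(K_k(S_\theta))},
\]
and since $\mu^{2k} = 1$ for every $k$, substituting the displayed orthogonal-invariance identity into each of the $2n$ summands produces $2n$ identical copies of $\sum_{\abs\alpha = m}\frac{1}{\alpha!}\langle \partial^\alpha f, \partial^\alpha g\rangle_{L^2(S_\theta)}$, which is exactly the asserted factor $2n$. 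The final sentence about Bernstein-type inequalities (if stated in the lemma) then follows by taking $f = g$ exactly as in the last display of the proof of Lemma~\ref{lemma:singeReflection}.

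The only genuinely non-routine input is the geometric fact that the images $K_0(S_\theta), \dots, K_{2n-1}(S_\theta)$ cover $\RR^2$ with pairwise essentially disjoint interiors, which is what legitimises both the direct sum decomposition and the interpretation of $X$ as an honest extension around the full angle $2\pi$; this is where the hypothesis $\theta = \pi/n$ with integer $n \ge 3$ is used, and it is verified by noting that $M_k = R^k M R^{-k}$ is the reflection across the line $R^k(\RR\times\{0\})$ and tracking how successive compositions rotate the fundamental sector about the origin (equivalently, invoking that a sector of opening angle $\pi/n$ is a fundamental domain for the dihedral group generated by the $M_k$). I expect this tiling bookkeeping to be the main — and only — obstacle; everything else is a line-by-line copy of the single-reflection argument.
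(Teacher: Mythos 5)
Your proof is correct and is exactly the argument the paper intends: the paper omits the proof of Lemma~\ref{lemma:multipleReflection} precisely because it is the proof of Lemma~\ref{lemma:singeReflection} repeated with each orthogonal map $K_k$ in place of $M$, using Lemma~\ref{lemma:derivative-transformation}\,(a) on each of the $2n$ essentially disjoint pieces $K_k(S_\theta)$ tiling $\RR^2$. Your bookkeeping (orthogonality of the $K_k$, $\mu^{2k}=1$, and the tiling justified by $\theta=\pi/n$) is sound and complete.
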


The above can easily be modified to allow also Cartesian products with a fixed domain $\Omega \subset \RR^d$. More precisely, for
$k = 1, \dots, 2n-1$ set $K_k' := K_k \oplus \mathrm{Id}$, where $\mathrm{Id}$ denotes the identity mapping on $\RR^d$, and define
$X \colon L^2(S_\theta \times \Omega) \to L^2(\RR^2 \times \Omega)$ by
\begin{equation}\label{eq:multipleReflection:Cartesian}
	Xf
	:=
	f \oplus \mu (f \circ K_1'^{-1}) \oplus \dots \oplus \mu^{2n-1} (f \circ K_{2n-1}'^{-1})
\end{equation}
with $\mu \in \{\pm 1\}$ as above. For future reference, we formulate the following variant of
Lemma~\ref{lemma:multipleReflection}, the proof of which we likewise skip.

\begin{lemma}\label{lemma:multipleReflection:Cartesian}
	Let $X \colon L^2(S_\theta \times \Omega) \to L^2(\RR^2 \times \Omega)$ be defined as
	in~\eqref{eq:multipleReflection:Cartesian}, and let $f,g \in L^2(S_\theta \times \Omega)$. If
	$Xf,Xg \in W^{\infty,2}(\RR^2 \times \Omega)$, then 	also $f,g \in W^{\infty,2}(S_\theta \times \Omega)$, and for all
	$m \in \NN_0$	we have
	\begin{equation*}
		\sum_{\abs{\alpha}=m} \frac{1}{\alpha!} \langle \partial^\alpha (Xf) , \partial^\alpha (Xg) \rangle_
		{L^2(\RR^2 \times \Omega)}
		=
		2n \sum_{\abs{\alpha}=m} \frac{1}{\alpha!} \langle \partial^\alpha f , \partial^\alpha g \rangle_
		{L^2(S_\theta \times \Omega)}
		.
	\end{equation*}
\end{lemma}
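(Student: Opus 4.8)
The plan is to carry over the proof of Lemma~\ref{lemma:singeReflection} essentially verbatim, with the block maps $K_k' = K_k \oplus \mathrm{Id}$ in the role of the single reflection $M$ and with the $2n-1$ reflected copies playing the role of the single copy $M(\Omega)$ there. First I would record the structural facts: each $K_k = M_k \circ \dots \circ M_1$ is a composition of hyperplane reflections in $\RR^2$, hence lies in the orthogonal group $O(2)$, so $K_k' = K_k \oplus \mathrm{Id} \in O(d+2)$ and $K_k'^{-1} = K_k^{-1} \oplus \mathrm{Id}$ also has Jacobian determinant of modulus $1$. Moreover $K_k'(S_\theta \times \Omega) = K_k(S_\theta) \times \Omega$, and since the copies $S_\theta = K_0(S_\theta), K_1(S_\theta), \dots, K_{2n-1}(S_\theta)$ tile $\RR^2$ up to a null set (the geometric fact underlying Lemma~\ref{lemma:multipleReflection}), the sets $K_k'(S_\theta \times \Omega)$, $k = 0,\dots,2n-1$, tile $\RR^2 \times \Omega$ up to a null set. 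This legitimises the orthogonal decomposition $L^2(\RR^2 \times \Omega) = \bigoplus_{k=0}^{2n-1} L^2(K_k'(S_\theta \times \Omega))$ implicit in~\eqref{eq:multipleReflection:Cartesian} and yields $\partial^\alpha(Xf) = \bigoplus_{k=0}^{2n-1} \mu^k \, \partial^\alpha(f \circ K_k'^{-1})$ for every multi-index $\alpha$, with the convention $K_0' = \mathrm{Id}$.

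Next I would apply, for each $k$, the change-of-variables identity for orthogonal linear maps from part~(a) of Lemma~\ref{lemma:derivative-transformation} — exactly as in the proof of Lemma~\ref{lemma:singeReflection}, but now in dimension $d+2$ and with $O = K_k'^{-1}$ — to get
\[
 \sum_{\abs{\alpha}=m} \frac{1}{\alpha!} \langle \partial^\alpha(f \circ K_k'^{-1}), \partial^\alpha(g \circ K_k'^{-1}) \rangle_{L^2(K_k'(S_\theta \times \Omega))}
 =
 \sum_{\abs{\alpha}=m} \frac{1}{\alpha!} \langle \partial^\alpha f, \partial^\alpha g \rangle_{L^2(S_\theta \times \Omega)}
\]
for all $m \in \NN_0$. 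Since $\mu \in \{\pm 1\}$ we have $\mu^k \overline{\mu^k} = 1$, so summing this identity over $k = 0, \dots, 2n-1$ and using the direct-sum expression for $\partial^\alpha(Xf)$ produces precisely the asserted factor $2n$. The regularity claim is immediate: because $f = (Xf)|_{S_\theta \times \Omega}$ and $g = (Xg)|_{S_\theta \times \Omega}$, the hypothesis $Xf, Xg \in W^{\infty,2}(\RR^2 \times \Omega)$ forces $f, g \in W^{\infty,2}(S_\theta \times \Omega)$ by restriction.

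The only genuinely delicate point — and the reason for the restriction $\theta = \pi/n$ — is the tiling assertion: one must verify that reflecting $S_\theta$ successively across the rays $R^k(\RR \times \{0\})$ yields exactly $2n$ pairwise essentially disjoint copies whose union exhausts $\RR^2$, so that the direct-sum decomposition of $L^2(\RR^2 \times \Omega)$ is honest and no copy is double-counted. This is a purely planar statement, identical to the one already invoked for Lemma~\ref{lemma:multipleReflection}, and it transfers to the Cartesian product with $\Omega$ unchanged precisely because $K_k' = K_k \oplus \mathrm{Id}$ acts trivially on the $\Omega$-factor. Everything else is a routine repetition of the single-reflection argument, which is why the full write-up may be omitted.
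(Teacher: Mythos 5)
Your proposal is correct and is exactly the argument the paper intends: it omits the proof precisely because it is the single-reflection proof of Lemma~\ref{lemma:singeReflection} repeated with the orthogonal maps $K_k' = K_k \oplus \mathrm{Id}$ in place of $M$, using Lemma~\ref{lemma:derivative-transformation}\,(a) on each of the $2n$ essentially disjoint copies and summing. Your explicit attention to the tiling of $\RR^2$ by the sectors $K_k(S_\theta)$, which underlies the orthogonal decomposition of $L^2(\RR^2 \times \Omega)$, is the only point the paper leaves implicit, and you handle it correctly.
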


\subsection{Applications}\label{subsec:specific}
We now use the above considerations to show Bernstein-type inequalities in the situations mentioned in
Section~\ref{subsec:applications}. Here, taking into account that the inequality
$\langle f , (-\Delta_\Omega)^m f \rangle \le \lambda^m \norm{f}{L^2(\Omega)}^2$ holds for all
$f \in \Ran_{-\Delta_\Omega}(\lambda)$, $\lambda \geq 0$, by functional calculus, the relation
\begin{equation}\label{eq:Bernstein:spectral:2}
	\sum_{\abs{\alpha}=m} \frac{1}{\alpha!} \langle \partial^\alpha f , \partial^\alpha g \rangle_{L^2(\Omega)}
	=
	\frac{1}{m!} \langle f , (-\Delta_\Omega)^m g \rangle_{L^2(\Omega)}
\end{equation}
for $f,g \in \Ran\EE_{-\Delta_\Omega}(\lambda)$, $\lambda \geq 0$, plays a crucial role. It can be established on certain domains
directly via Lemma~\ref{lem:Bernstein-Criterion}. Bernstein-type inequalities in other situations can then be deduced from this by
taking Cartesian products (Lemma~\ref{lemma:bernstein-cartesian-product}), by rescaling (Corollary~\ref{lem:scaling}), or by
single or multiple reflections (Lemmas~\ref{lemma:singeReflection},~\ref{lemma:multipleReflection},
and~\ref{lemma:multipleReflection:Cartesian}). The latter requires that certain transformed functions $Xf$ with
$f \in \Ran\EE_{H_\Omega}(\lambda)$ are elements of a spectral subspace for the pure Laplacian on some other domain
$\tilde{\Omega}$ where relation~\eqref{eq:Bernstein:spectral:2} is known to hold or, at least, some Bernstein-type inequality is
available. An abstract framework for this is provided by the following result extracted from~\cite[Lemma~3.3]{ES19}.

\begin{lemma}\label{lemma:spectral-relation}
	Let $H$ and $\tilde H$ be lower semibounded self-adjoint operators on Hilbert spaces $\cH$ and $\tilde{\cH}$, respectively, and
	let $X \colon \cH\rightarrow \tilde{\cH}$ be a bounded linear operator such that $\tilde H X \supset X H$, that is,
	$Xf \in \Dom(\tilde{H})$ and $\tilde{H} Xf = X Hf$ for all $f \in \Dom(H)$. Then, the spectral families $\EE_{\tilde H}$	and
	$\EE_H$ associated with $\tilde H$ and $H$, respectively, satisfy
	\begin{equation}\label{eq:spectral-relation}
		\EE_{\tilde H}(\lambda) X = X \EE_H (\lambda) \qquad \forall\, \lambda\in\RR.
	\end{equation}
\end{lemma}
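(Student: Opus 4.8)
The plan is to derive~\eqref{eq:spectral-relation} from the intertwining relation $\tilde H X \supset X H$ by passing first to the resolvents and then to the spectral projections via the Stone formula (or, alternatively, via bounded Borel functional calculus). First I would observe that, since $H$ and $\tilde H$ are lower semibounded, there is a common $z \in \CC \setminus \RR$ (indeed any $z$ with large negative real part) lying in the resolvent sets of both operators. For $f \in \cH$ put $g := (H-z)^{-1}f \in \Dom(H)$; then by hypothesis $Xg \in \Dom(\tilde H)$ and $(\tilde H - z)Xg = X(H-z)g = Xf$, hence $Xg = (\tilde H - z)^{-1}Xf$. Since $f$ was arbitrary this gives the resolvent intertwining
\begin{equation*}
 (\tilde H - z)^{-1} X = X (H - z)^{-1},
\end{equation*}
and by taking adjoints (or repeating the argument with $\bar z$) the same holds with $z$ replaced by $\bar z$.

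Next I would bootstrap from resolvents to general bounded Borel functions $\varphi$ of the operators. The clean way is to note that the set of bounded Borel functions $\varphi$ for which $\varphi(\tilde H) X = X \varphi(H)$ holds is closed under linear combinations, products, and bounded pointwise limits, and contains the functions $t \mapsto (t-z)^{-1}$ and $t \mapsto (t-\bar z)^{-1}$; since these resolvent functions generate (in the appropriate sense, e.g.\ via Stone--Weierstrass on the one-point compactification together with a monotone/dominated convergence argument) all bounded Borel functions on $\RR$, the intertwining extends to all such $\varphi$. Applying this to $\varphi = \mathbf{1}_{(-\infty,\lambda]}$, which is a bounded Borel function, yields exactly $\EE_{\tilde H}(\lambda) X = X \EE_H(\lambda)$ for every $\lambda \in \RR$. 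Alternatively, and perhaps more in the spirit of a short self-contained argument, one can invoke Stone's formula: $\EE_{\tilde H}((a,b))$ is obtained as a strong limit of integrals of $(\tilde H - (s+\ii\epsilon))^{-1} - (\tilde H - (s-\ii\epsilon))^{-1}$ over $s \in (a,b)$ as $\epsilon \downarrow 0$, and the already established resolvent intertwining passes through the integral and the limit because $X$ is bounded; letting $a \to -\infty$ then gives the claim for $\EE(\lambda)$, after handling the (null) set of atoms of the spectral measure at $\lambda$ in the usual way (or simply noting both sides are right-continuous in $\lambda$ in the strong operator topology).

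The main obstacle, such as it is, is purely technical bookkeeping rather than a genuine difficulty: one must be careful that $\tilde H X \supset X H$ only asserts the intertwining on $\Dom(H)$, so the resolvent identity must be derived by the substitution $g = (H-z)^{-1}f$ as above rather than by naive manipulation, and one must ensure $z$ is genuinely in both resolvent sets (lower semiboundedness is exactly what guarantees this for $z$ real and sufficiently negative, which also sidesteps any issue with taking $\bar z$). Since this lemma is quoted as being extracted from~\cite[Lemma~3.3]{ES19}, I would keep the proof to the two displays above plus a sentence invoking Stone's formula, and not belabour the functional-calculus density argument.
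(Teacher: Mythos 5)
Your argument is correct, and there is nothing in the paper to compare it against in detail: the lemma is stated as ``extracted from [ES19, Lemma~3.3]'' and no proof is given in this manuscript, so the expected proof is exactly the standard one you outline --- pass from the intertwining on $\Dom(H)$ to the resolvent identity $(\tilde H - z)^{-1}X = X(H-z)^{-1}$ by the substitution $g = (H-z)^{-1}f$, then extend to all bounded Borel functions and specialise to $\mathbf{1}_{(-\infty,\lambda]}$. One small inaccuracy: taking adjoints of $(\tilde H - z)^{-1}X = X(H-z)^{-1}$ yields $X^*(\tilde H - \bar z)^{-1} = (H-\bar z)^{-1}X^*$, i.e.\ the intertwining for $X^*$ in the reverse direction, \emph{not} the relation $(\tilde H - \bar z)^{-1}X = X(H-\bar z)^{-1}$ that you need for the Stone--Weierstrass step. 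Your parenthetical fallback --- simply repeating the substitution argument with $\bar z$, which lies in both resolvent sets whenever $z$ does --- is the correct route, so the gap is only in the first of your two suggested justifications. With that fixed, both the functional-calculus bootstrap and the Stone-formula alternative go through as you describe; the right-continuity remark at the end correctly handles possible atoms of the spectral measures at $\lambda$.
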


Note that relation~\eqref{eq:spectral-relation} indeed ensures that $Xf \in \Ran\EE_{\tilde{H}}(\lambda)$ if
$f \in \Ran\EE_H(\lambda)$. The required extension relation $\tilde H X \supset X H$ can usually be conveniently verified in terms
of the associated forms:
Let $\fh$ and $\tilde{\fh}$ denote the sesquilinear forms associated with $H$ and $\tilde{H}$, respectively. Then, the relation
$\tilde H X \supset X H$ holds if
\begin{equation}\label{eq:extRelationForm}
	\tilde{\fh}[Xf , g]
	=
	\fh[ f , X^*g ]
	\qquad \forall\, f \in \Dom(\fh),\ \forall\, g \in \Dom(\tilde{\fh})
	,
\end{equation}
which implicitly requires that $X$ maps $\Dom(\fh)$ into $\Dom(\tilde{\fh})$ and $X^*$ maps $\Dom(\tilde{\fh})$ into $\Dom(\fh)$.
Although this approach formally allows to handle fairly general operators $H$ and $\tilde{H}$ in the context of
Lemma~\ref{lemma:spectral-relation}, we treat below only those specific cases of interest for our intended applications.

We first treat the case of single reflections: Let $M$ and $X \colon L^2(\Omega) \to L^2(\tilde{\Omega})$ be as in
Lemma~\ref{lemma:singeReflection}, and consider the pure Laplacians $-\Delta_\Omega$ and $-\Delta_{\tilde{\Omega}}$, both sharing
the same type of boundary conditions (Dirichlet or Neumann). The desired relation
$(-\Delta_{\tilde{\Omega}})X \supset X(-\Delta_\Omega)$ then follows from the more general considerations
in~\cite[Section~4]{ES19}, where the corresponding relation~\eqref{eq:extRelationForm} has been established for uniformly elliptic
divergence-type operators including an additive bounded potential (upon a suitable extension of the coefficients to
$\tilde{\Omega}$). In particular, it has been shown in~\cite[Lemma~4.1]{ES19} that $X$ and $X^*$ map the form domains of
$-\Delta_\Omega$ and $-\Delta_{\tilde{\Omega}}$ (that is, $H_0^1$ or $H^1$, depending on the boundary conditions) appropriately as
indicated in~\eqref{eq:extRelationForm}. Hence, Lemma~\ref{lemma:spectral-relation} can be applied in this situation.
Consequently, if $Xf,Xg \in \Ran_{-\Delta_{\tilde{\Omega}}}(\lambda)$ with $f,g \in \Ran\EE_{-\Delta_\Omega}(\lambda)$ satisfy
relation~\eqref{eq:Bernstein:spectral:2} on $\tilde{\Omega}$, then by Lemmas~\ref{lemma:singeReflection}
and~\ref{lemma:spectral-relation} we have
\begin{equation}\label{eq:Bernstein:spectral:singleReflection}
	\begin{aligned}
		\frac{1}{m!}\langle f , (-\Delta_\Omega)^m g \rangle_{L^2(\Omega)}
		&=
		\frac{1}{2m!} \langle Xf , X(-\Delta_\Omega)^m g \rangle_{L^2(\tilde\Omega)}\\
		&=
		\frac{1}{2m!} \langle Xf , (-\Delta_{\tilde\Omega})^m Xg \rangle_{L^2(\tilde\Omega)}\\
		&=
		\frac{1}{2} \sum_{\abs{\alpha}=m} \frac{1}{\alpha!} \langle \partial^\alpha (Xf) , \partial^\alpha (Xg) \rangle_
		{L^2(\tilde\Omega)}\\
		&=
		\sum_{\abs{\alpha}=m} \frac{1}{\alpha!}\langle \partial^\alpha f , \partial^\alpha g \rangle_{L^2(\Omega)}
	\end{aligned}
\end{equation}
for all $m \in \NN_0$, so that $f,g$ satisfy relation~\eqref{eq:Bernstein:spectral:2} on $\Omega$.

The considerations in~\cite[Section~4]{ES19} used to prove~\eqref{eq:extRelationForm} for single reflections mainly involve
integration by parts and change of variables. It is not hard to adapt them to the case of multiple reflections as in
Lemmas~\ref{lemma:multipleReflection} and~\ref{lemma:multipleReflection:Cartesian}. We will not perform this adaptation here and
just take the corresponding relation~\eqref{eq:extRelationForm} for granted, so that the conclusion of
Lemma~\ref{lemma:spectral-relation} will be available also for these multiple reflections. The corresponding variant
of~\eqref{eq:Bernstein:spectral:singleReflection} (with $1/2$ replaced by $(1/2n)$) then holds as well.

We are now ready to prove relation~\eqref{eq:Bernstein:spectral:2} for the domains mentioned in Section~\ref{subsubsec:Laplacian}.

\begin{prop}\label{prop:bernstein-pureLaplacian}
	Let $\Omega$ be one of the following domains:
	\begin{enumerate}[(i)]

		\item a generalised rectangle $\bigtimes_{j=1}^d (a_j, b_j)\subset\RR^d$ with $-\infty \leq a_j < b_j\leq \infty$ for all
					$j=1,\ldots,d$;

		\item an equilateral triangle in $\RR^2$;

		\item an isosceles right-angled or an hemiequilateral triangle in $\RR^2$;

		\item a sector $S_\theta \subset \RR^2$ with angle $\theta = \pi / n$, $n \in \NN$, $n \geq 3$;

		\item any finite Cartesian product of the above.

	\end{enumerate}

	Then, $\Ran\EE_{-\Delta_\Omega}(\lambda) \subset W^{\infty,2}(\Omega)$ for all $\lambda \geq 0$,
	and relation~\eqref{eq:Bernstein:spectral:2} holds for all $f,g \in \Ran\EE_{-\Delta_\Omega}(\lambda)$ and all $m \in \NN_0$.
\end{prop}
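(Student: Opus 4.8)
The plan is to verify the hypothesis~\eqref{eq:BernsteinCriterion:cond} of Lemma~\ref{lem:Bernstein-Criterion} for the base cases, namely the generalised rectangles and the triangles, and then to bootstrap the remaining cases (the sectors and the Cartesian products) using the transformation lemmas. For a generalised rectangle $\Omega = \bigtimes_{j=1}^d(a_j,b_j)$ with Dirichlet or Neumann conditions, $\cM := \bigcup_{\lambda \ge 0}\Ran\EE_{-\Delta_\Omega}(\lambda)$ is invariant under $-\Delta$ by functional calculus, and each $f \in \cM$ lies in $W^{\infty,2}(\Omega)$: indeed for a (bi-)infinite edge this follows from the corresponding statement on $\RR$ or a half-line via separation of variables, and for a bounded edge the eigenfunctions are explicit trigonometric functions whose finite linear combinations are manifestly smooth up to the boundary. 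The condition~\eqref{eq:BernsteinCriterion:cond} is then checked via Remark~\ref{rmk:Bernstein}, i.e.\ by showing the boundary integral~\eqref{eq:boundary-integration} vanishes: on each face of the box the outward normal derivative is $\pm\partial_{x_j}$, and for Dirichlet data every $\partial^\alpha f$ vanishes on that face (the tangential derivatives preserve the Dirichlet condition, and the factor $\partial^\alpha \overline{g}$ carrying a single normal derivative is still multiplied against $\partial^\alpha f = 0$ when $\alpha$ has no $x_j$-component, while if $\alpha$ does have an $x_j$-component one integrates by parts along the edge to land on a vanishing tangential trace); for Neumann data the normal derivative $\partial_{x_j}\partial^\alpha \overline g$ vanishes on the face whenever $\alpha$ has no $x_j$-component, and when it does one again integrates along the edge. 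This is the routine but slightly fiddly step.

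For the three triangles — equilateral, isosceles right-angled, hemiequilateral — the idea is to avoid direct boundary computations entirely and instead use the classical fact that these are exactly the triangles that tile $\RR^2$ (or a suitable strip/box) by reflections: repeated reflection across the sides of such a triangle produces a tiling, and the associated unfolding operator $X$ (a finite or locally finite direct sum of reflected copies, with signs $\mu=\pm1$ according to Dirichlet/Neumann) maps $L^2(\Omega)$ into $L^2$ of the tiling domain, which is either $\RR^2$, a strip, or a rectangle — all already handled. Concretely one realises the unfolded domain as a generalised rectangle after an affine change of coordinates (for the right-angled and hemiequilateral cases the reflection group is generated by reflections in an orthogonal pair of lines together with a diagonal, giving a rectangle; the equilateral case unfolds to a plane tiled by a triangular lattice, and one composes finitely many reflections to reach a fundamental parallelogram, then applies Lemma~\ref{lemma:bernstein-cartesian-product} / the rectangle case after a linear map handled by Lemma~\ref{lem:scaling}). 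Applying Lemma~\ref{lemma:spectral-relation} (whose form hypothesis~\eqref{eq:extRelationForm} for such reflections is supplied by the discussion following it, adapted from~\cite[Section~4]{ES19}) gives $Xf, Xg \in \Ran\EE_{-\Delta_{\tilde\Omega}}(\lambda)$ whenever $f,g \in \Ran\EE_{-\Delta_\Omega}(\lambda)$, and since relation~\eqref{eq:Bernstein:spectral:2} holds on $\tilde\Omega$, the computation~\eqref{eq:Bernstein:spectral:singleReflection} (with the combinatorial factor $2$ replaced by the number of tiles in the fundamental cell) transports it back to $\Omega$; the multiple-reflection bookkeeping is exactly of the type already set up in Lemmas~\ref{lemma:multipleReflection} and~\ref{lemma:multipleReflection:Cartesian}.

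The sector $S_\theta$ with $\theta = \pi/n$, $n \ge 3$, is then immediate from Lemma~\ref{lemma:multipleReflection} together with Lemma~\ref{lemma:spectral-relation}: unfolding by the $2n$ rotations/reflections $K_k$ maps $S_\theta$ onto $\RR^2$, the sharpened relation on $\RR^2$ is the already-established rectangle case with $d=2$ and both edges infinite, and~\eqref{eq:Bernstein:spectral:singleReflection} with $1/2$ replaced by $1/(2n)$ yields~\eqref{eq:Bernstein:spectral:2} on $S_\theta$ along with the required $W^{\infty,2}$-regularity. Finally, the arbitrary finite Cartesian products in case (v) follow from Lemma~\ref{lemma:bernstein-cartesian-product} by induction on the number of factors, using that each factor of the form considered (generalised rectangle, triangle, or sector) has, after possibly being handled as above, the property that $\cM_j \subset W^{\infty,2}(\Omega_j)$ satisfies~\eqref{eq:bernstein-criterion} — and noting that whenever a factor is bounded (a triangle, or a rectangle all of whose edges are finite) at least one of the two Laplacians in the product has purely discrete spectrum, as required by that lemma (if both factors are unbounded one reorders so that a Cartesian product of generalised rectangles is itself a generalised rectangle and is absorbed into case (i), or one invokes the variant in Remark~\ref{rmk:cartesian}). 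I expect the only genuinely delicate point to be the verification of the vanishing boundary integral~\eqref{eq:boundary-integration} for Neumann conditions on a box with some finite and some infinite edges, where one must be careful that the tangential integrations by parts along a finite edge pick up no endpoint contributions (they do not, since at a corner two Neumann faces meet and the relevant derivative traces vanish) — everything else is an exercise in combining the already-proved transformation lemmas.
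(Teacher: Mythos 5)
Your reduction of the sectors, of case~(iii), and of the Cartesian products to the transformation lemmas matches the paper, but the two base cases you sketch both have genuine gaps, and the second one breaks the argument. First, for the generalised rectangle: the mechanism you propose for the terms of~\eqref{eq:boundary-integration} in which $\alpha$ has a normal component --- ``integrate by parts along the edge to land on a vanishing tangential trace'' --- does not work. On the Dirichlet face $x_1=0$ of $(0,1)^2$ the $m=1$ term $\int_0^1 \partial_1 f\,\partial_1^2\bar g\,\dd x_2$ contains no tangential derivative to move, and $\partial_1 f$ does not vanish on the face. The trace does vanish, but for a different reason: for (finite combinations of) eigenfunctions one has $\partial_\nu^2 g=\Delta g-\partial_{\nu^\perp}^2 g$ on each face, so \emph{even}-order normal derivatives of Dirichlet data (and odd-order ones of Neumann data) have vanishing trace, and in each summand of~\eqref{eq:boundary-integration} either $\alpha_j$ or $\alpha_j+1$ is even. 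This eigenvalue-equation trick is exactly what the paper uses (in one dimension after reducing the box to intervals via Lemma~\ref{lemma:bernstein-cartesian-product} and single reflections, and again in two dimensions for the equilateral triangle).

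Second, and more seriously, your treatment of the equilateral triangle by unfolding cannot be repaired within the paper's framework. Unfolding a \emph{bounded} triangle by its reflection group requires infinitely many copies to tile $\RR^2$, so the unfolded function is periodic and not in $L^2(\RR^2)$; the extension operator $X$ and Lemma~\ref{lemma:spectral-relation} are therefore unavailable (this is precisely why the sector, which tiles the plane with $2n$ copies, works and the triangle does not). Passing instead to a fundamental parallelogram of the hexagonal translation lattice and then straightening it to a rectangle requires a non-orthogonal linear map, and under such a map the exact identity~\eqref{eq:Bernstein:spectral:2} --- which is what the proposition asserts --- is \emph{not} preserved: Lemma~\ref{lemma:derivative-transformation}\,(a) covers only orthogonal maps, and Lemma~\ref{lem:scaling} yields only a one-sided inequality with a constant degraded by $p_{\min}^{-2m}$. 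The paper avoids all of this by proving the equilateral case directly from the explicitly known trigonometric eigenfunctions via Lemma~\ref{lem:Bernstein-Criterion} and Remark~\ref{rmk:Bernstein}, using the same even/odd normal-derivative argument as above; the isosceles right-angled and hemiequilateral triangles are then each a \emph{single} reflection of a square, resp.\ of an equilateral triangle, so case~(iii) follows from~\eqref{eq:Bernstein:spectral:singleReflection} without any tiling of the plane.
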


\begin{proof}
	(i).~%
	By translation, we may assume that each semibounded coordinate $(a_j,b_j)$ is of the form $(0,\infty)$ or $(-\infty,0)$. We may
	then reflect this coordinate to the whole axis $\RR$ by means of Lemma~\ref{lemma:singeReflection}. In light of the above
	considerations, this reduces generalised rectangles to the three particular cases of $\RR^d$, hyperrectangles, and Cartesian
	products of these two. The latter are covered by	Lemma~\ref{lemma:bernstein-cartesian-product} as soon as the first two cases
	are established.

	For $\RR^d$, we may apply Lemma~\ref{lem:Bernstein-Criterion} with $\cM$ being the Schwartz space; cf.~also
	Remark~\ref{rmk:Bernstein}. It only remains to recall that by Fourier transform each $\Ran\EE_{-\Delta_{\RR^d}}(\lambda)$ is
	contained in the Schwartz space.

	Hyperrectangles can be reduced to the case of one dimensional bounded intervals $(a,b)$ by
	Lemma~\ref{lemma:bernstein-cartesian-product}. Here, all eigenfunctions of the negative second derivative on $(a,b)$ with
	Dirichlet or Neumann boundary conditions are finite trigonometric sums and, thus, clearly belong to $W^{\infty,2}((a,b))$. By
	(sesqui-)linearity, it now suffices to observe	that the one dimensional variant of~\eqref{eq:boundary-integration} holds for all
	eigenfunctions; in fact, for each choice of eigenfunctions $f,g$ we clearly have
	\begin{equation*}
		f^{(m)}(x)g^{(m+1)}(x) = 0, \quad x \in \{a,b\},\quad m\in\NN_0.
	\end{equation*}		
	This allows us to apply Lemma~\ref{lem:Bernstein-Criterion} for the respective spectral subspaces, which	concludes the proof of
	case~(i).

	(ii).~%
	The Dirichlet and Neumann eigenfunctions for the Laplacian on an equilateral triangle are explicitly known to be certain
	trigonometric functions, see, e.g.,~\cite{P98}. In particular, they belong to $W^{\infty,2}(\Omega)$, have a smooth extension to
	the whole plane $\RR^2$, and $-\Delta_\Omega$ acts on them as the differential expression $-\Delta$. In view of
	Lemma~\ref{lem:Bernstein-Criterion} and Remark~\ref{rmk:Bernstein}, it therefore suffices to show that
	\begin{equation*}
		\sum_{\abs{\alpha}=m} \frac{1}{\alpha!} (\partial^\alpha f) (\partial_\nu \partial^\alpha g) = 0
		\quad\text{ a.e.~on } \partial\Omega
	\end{equation*}
	for all eigenfunctions $f,g$ of $-\Delta_\Omega$ and all $m \in \NN$. This, in turn, holds by Lemma~\ref{lem:partial_der} if
	\begin{equation}\label{eq:Bernstein-equilateral}
		(\partial_{\nu^\perp}^{\alpha_1} \partial_\nu^{\alpha_2} f) \cdot (\partial_\nu \partial_{\nu^\perp}^{\alpha_1}
		\partial_\nu^{\alpha_2}g) = 0
		\quad\text{ a.e.~on } \partial\Omega
	\end{equation}
	for all $\alpha = (\alpha_1,\alpha_2) \in \NN_0^2$.
	
	Let $(\nu_1,\nu_2)$ be a fixed orthonormal basis of $\RR^2$. If $f$ is an eigenfunction with eigenvalue $p \geq 0$, the rotation
	symmetry of the Laplacian gives
	\begin{equation*}
		\partial_{\nu_2}^2 f
		=
		\Delta f - \partial_{\nu_1}^2 f
		=
		-p f - \partial_{\nu_1}^2 f
	\end{equation*}
	and, by induction,
	\begin{equation*}
		\partial_{\nu_2}^{2k} f
		=
		(-1)^k \sum_{j=0}^k \binom{k}{j} p^{k-j} \partial_{\nu_1}^{2j} f
	\end{equation*}
	for all $k \in \NN_0$.
	
	Suppose that $f$ satisfies Dirichlet boundary conditions. Since the outward unit normal $\nu$ on $\partial \Omega$ is locally
	constant, we then have $\partial_{\nu^\perp}^j f = 0$ almost everywhere on $\partial\Omega$ for all	$j \in \NN_0$ and, moreover,
	we may locally choose $(\nu_1,\nu_2) = (\nu^\perp,\nu)$.  If $\alpha_2 = 2k$ with $k \in \NN_0$ is even, this leads to
	\begin{equation*}
		\partial_{\nu^\perp}^{\alpha_1} \partial_\nu^{\alpha_2} f
		=
		(-1)^k \sum_{j=0}^k \binom{k}{j} p^{k-j} \partial_{\nu^\perp}^{2j+\alpha_1} f
		=
		0
		\quad\text{ a.e.~on }\partial\Omega
		.
	\end{equation*}
	If $\alpha_2$ is odd, then $\alpha_2+1$ is even, and the same argument applied with $g$ instead of $f$ yields
	$\partial_\nu \partial_{\nu^\perp}^{\alpha_1}\partial_\nu^{\alpha_2}g =
	\partial_{\nu^\perp}^{\alpha_1}\partial_\nu^{\alpha_2+1}g = 0$ almost everywhere on $\partial\Omega$.
	Thus,~\eqref{eq:Bernstein-equilateral} holds for all Dirichlet eigenfunctions $f,g$ of $-\Delta_\Omega$.

	If $f$ satisfies Neumann boundary conditions, we analogously have $\partial_{\nu^\perp}^j \partial_\nu f = 0$ almost	everywhere
	on $\partial\Omega$ for all $j \in \NN_0$, and~\eqref{eq:Bernstein-equilateral} follows in the same way as for Dirichlet
	eigenfunctions, but with the cases of $\alpha_2$ even and odd reversed.

	(iii).~%
	Both types of triangles are obtained by a single reflection from a square or an equilateral triangle, respectively, which
	are covered by~(i) and~(ii), so that case~(iii) follows by~\eqref{eq:Bernstein:spectral:singleReflection}.

	(iv).~%
	This follows immediately from the variant of~\eqref{eq:Bernstein:spectral:singleReflection} corresponding to the multiple
	reflections from Lemma~\ref{lemma:multipleReflection} and part~(i) applied for $\RR^2$.

	(v).~%
	Since sectors and generalised rectangles are the only unbounded domains under consideration and a Cartesian product of
	generalised rectangles is again a generalised rectangle, we may first treat Cartesian products with domains from~(i)--(iii) by
	Lemma~\ref{lemma:bernstein-cartesian-product}. Any further Cartesian products with sectors can then be handled by the variant
	of~\eqref{eq:Bernstein:spectral:singleReflection} corresponding to the multiple reflections from
	Lemma~\ref{lemma:multipleReflection:Cartesian}. This completes the proof.
\end{proof}%

\begin{rmk}\label{rmk:convexpolytopes}
  The proof for equilateral triangles in Proposition~\ref{prop:bernstein-pureLaplacian} can be considered as a more sophisticated
  version of the proof for bounded intervals in~(i). In fact, this proof can easily be adapted to handle also every bounded convex
  polytope in $\RR^d$ for which the corresponding eigenfunctions of the Laplacian belong to $W^{\infty,2}$. In this case, each
  eigenfunction can locally be extended by reflection along every boundary face (where the outward unit normal is constant). The
  (higher dimensional) variant of~\eqref{eq:Bernstein-equilateral} can then	be established with basically the same reasoning. This
  also gives an alternative proof for the case of hyperrectangles without reverting to
  Lemma~\ref{lemma:bernstein-cartesian-product}.
\end{rmk}

Let us now consider divergence-type operators $H = H_\Omega(A)$ as in Section~\ref{subsubsec:Divergence} with constant positive
definite symmetric matrix $A \in \RR^{d\times d}$. With $\tilde{\Omega} := A^{-1/2}(\Omega)$, define
$X \colon L^2(\Omega) \to L^2(\tilde{\Omega})$ by
\begin{equation*}
	Xf
	:=
	f \circ A^{1/2}
	.
\end{equation*}
It is then straightforward to verify that
\begin{equation*}
	X H_\Omega(A)
	\subset
	X (-\Delta_{\tilde{\Omega}})
\end{equation*}
via the corresponding relation~\eqref{eq:extRelationForm}, where the Laplacian $-\Delta_{\tilde{\Omega}}$ is equipped with the
same type of boundary conditions as $H_\Omega(A)$. Consequently, Lemma~\ref{lem:scaling} with $P = A^{1/2}$ yields a
Bernstein-type inequality for $H_\Omega(A)$ on all domains for which $\tilde{\Omega} = A^{-1/2}(\Omega)$ is one of the domains in
Proposition~\ref{prop:bernstein-pureLaplacian}. A reasonable class of such domains that does not depend on a specific choice of
$A$ is provided in the following result.

\begin{cor}\label{cor:bernstein-div-type}
	Let $A \in \RR^{d \times d}$ be symmetric and positive definite. If
	\begin{enumerate}[(i)]

		\item $\Omega = \RR^d$ or $\Omega$ is a half-space

		or

		\item $A$ is diagonal and $\Omega = \bigtimes_{j=1}^d (a_j,b_j)$ is a generalised rectangle,

	\end{enumerate}
	then $\Ran\EE_{H_\Omega(A)}(\lambda) \subset W^{\infty,2}(\Omega)$ for all $\lambda \geq 0$, and every
	$f \in \Ran\EE_{H_\Omega(A)}(\lambda)$ satisfies a Bernstein-type inequality with respect to
	$C_B(m,\lambda) = (\lambda / \sigma_{\min})^m$, where $\sigma_{\min}$ is the smallest eigenvalue of $A$.
\end{cor}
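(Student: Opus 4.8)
The plan is to reduce both cases to Proposition~\ref{prop:bernstein-pureLaplacian} via the scaling operator $X \colon L^2(\Omega) \to L^2(\tilde\Omega)$, $Xf := f \circ A^{1/2}$, with $\tilde\Omega = A^{-1/2}(\Omega)$, exactly as set up in the paragraph preceding the statement. First I would verify the key extension relation $X H_\Omega(A) \subset X(-\Delta_{\tilde\Omega})$ through the form identity~\eqref{eq:extRelationForm}: writing $\fh[f,g] = \langle A^{1/2}\nabla f, A^{1/2}\nabla g\rangle_{L^2(\Omega)}$ for the form of $H_\Omega(A)$ and $\tilde\fh$ for the Dirichlet (resp.\ Neumann) form of $-\Delta_{\tilde\Omega}$, the chain rule gives $\nabla(f\circ A^{1/2}) = A^{1/2}(\nabla f)\circ A^{1/2}$, and a change of variables $y = A^{1/2}x$ (with Jacobian $\det A^{1/2}$) turns $\tilde\fh[Xf, g]$ into $(\det A^{1/2})^{-1}$ times $\fh[f, X^* g]$ up to the correct normalisation of $X^*$; one also checks $X$ maps $H^1_0(\Omega)$ (resp.\ $H^1(\Omega)$) into $H^1_0(\tilde\Omega)$ (resp.\ $H^1(\tilde\Omega)$) and similarly for $X^*$, since $A^{1/2}$ is a bi-Lipschitz linear bijection preserving the relevant boundary conditions. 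Then Lemma~\ref{lemma:spectral-relation} yields $\EE_{-\Delta_{\tilde\Omega}}(\lambda) X = X \EE_{H_\Omega(A)}(\lambda)$, so that $f \in \Ran\EE_{H_\Omega(A)}(\lambda)$ implies $Xf = f\circ A^{1/2} \in \Ran\EE_{-\Delta_{\tilde\Omega}}(\lambda)$.

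Next I would check that in each of the two listed cases $\tilde\Omega = A^{-1/2}(\Omega)$ is indeed one of the domains covered by Proposition~\ref{prop:bernstein-pureLaplacian}. In case~(i), if $\Omega = \RR^d$ then $\tilde\Omega = \RR^d$, and if $\Omega$ is a half-space then $A^{-1/2}(\Omega)$ is again a half-space (a linear bijection sends a half-space to a half-space); up to a rotation — which commutes with the Laplacian and with the Bernstein sums by Lemma~\ref{lemma:derivative-transformation}\,(a), or can simply be absorbed — this is a generalised rectangle of the form $\RR^{d-1}\times(0,\infty)$, hence covered by part~(i) of the proposition. In case~(ii), $A = \diag(\sigma_1,\dots,\sigma_d)$ is diagonal with positive entries, so $A^{-1/2} = \diag(\sigma_1^{-1/2},\dots,\sigma_d^{-1/2})$ acts coordinatewise and maps the generalised rectangle $\bigtimes_j (a_j,b_j)$ to the generalised rectangle $\bigtimes_j(\sigma_j^{-1/2}a_j,\sigma_j^{-1/2}b_j)$, again covered by part~(i). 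In both cases Proposition~\ref{prop:bernstein-pureLaplacian} gives $Xf \in W^{\infty,2}(\tilde\Omega)$ together with relation~\eqref{eq:Bernstein:spectral:2} on $\tilde\Omega$; combined with the functional-calculus bound $\langle Xf, (-\Delta_{\tilde\Omega})^m Xf\rangle \le \lambda^m \norm{Xf}{L^2(\tilde\Omega)}^2$ for $Xf \in \Ran\EE_{-\Delta_{\tilde\Omega}}(\lambda)$, this says precisely that $f\circ A^{1/2}$ satisfies a Bernstein-type inequality with respect to $\tilde C_B(m) = \lambda^m$.

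Finally I would invoke Lemma~\ref{lem:scaling} with $P = A^{1/2}$, which is symmetric and positive definite with smallest eigenvalue $p_{\min} = \sigma_{\min}^{1/2}$. The lemma transfers the Bernstein-type inequality for $f\circ A^{1/2}$ on $\tilde\Omega = A^{-1/2}(\Omega)$ back to $f$ on $\Omega$ with constant $C_B(m) = \tilde C_B(m)/p_{\min}^{2m} = \lambda^m/\sigma_{\min}^m = (\lambda/\sigma_{\min})^m$, and in particular gives $f \in W^{\infty,2}(\Omega)$; this is the asserted $C_B(m,\lambda)$. The main obstacle I anticipate is the careful bookkeeping in the first step — getting the determinant factors, the adjoint $X^*$, and the boundary-condition preservation all consistent in~\eqref{eq:extRelationForm} — but since the paper explicitly says this verification ``is straightforward'' and points to the analogous computations in~\cite[Section~4]{ES19}, it should be routine rather than genuinely hard; everything after that is a direct concatenation of Lemma~\ref{lemma:spectral-relation}, Proposition~\ref{prop:bernstein-pureLaplacian}, and Lemma~\ref{lem:scaling}.
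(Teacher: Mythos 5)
Your proposal is correct and follows exactly the paper's route: the scaling operator $Xf = f\circ A^{1/2}$, the extension relation verified via~\eqref{eq:extRelationForm}, Lemma~\ref{lemma:spectral-relation} to place $Xf$ in $\Ran\EE_{-\Delta_{\tilde\Omega}}(\lambda)$, the observation that $A^{-1/2}(\Omega)$ is again $\RR^d$, a half-space (up to rotation), or a generalised rectangle covered by Proposition~\ref{prop:bernstein-pureLaplacian}\,(i), and finally Lemma~\ref{lem:scaling} with $P=A^{1/2}$ and $p_{\min}=\sigma_{\min}^{1/2}$ to obtain $C_B(m,\lambda)=(\lambda/\sigma_{\min})^m$. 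The only difference is that you spell out the change-of-variables bookkeeping that the paper dismisses as straightforward; the argument is the same.
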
 

\begin{proof}
	In case of~(i), $A^{-1/2}(\Omega)$ is again $\RR^d$ or a half-space, respectively, which can be handled for the Laplacian by
	Proposition~\ref{prop:bernstein-pureLaplacian}\,(i), upon possibly a global rotation of the coordinate system.
	
	In case of~(ii), $A^{-1/2}(\Omega)$ is again a generalised rectangle, which can likewise be handled for the Laplacian by
	Proposition~\ref{prop:bernstein-pureLaplacian}\,(i).
\end{proof}%

We finally discuss the harmonic oscillator $H_\Omega = -\Delta_\Omega + \abs{x}^2$ as in
Section~\ref{subsubsec:harmonicOscillator}. This is understood as the form sum of $-\Delta_\Omega$ and the densely defined
nonnegative closed form
\begin{equation*}
	\fv_\Omega [f , g]
	:=
	\langle \abs{\cdot}f , \abs{\cdot}g \rangle_{L^2(\Omega)}
\end{equation*}
defined on $\Dom(\fv_\Omega) = \{ f \in L^2(\Omega) \colon \abs{\cdot}f \in L^2(\Omega) \}$.

We consider the extension by single reflections: Let $M$ and $X \colon L^2(\Omega) \to L^2(\tilde{\Omega})$ be as in
Lemma~\ref{lemma:singeReflection}. We extend the form $\fv_\Omega$ by reflection to a densely defined nonnegative closed form
$\fv_{\tilde{\Omega}}$ on
$\Dom(\fv_{\tilde{\Omega}}) = \{ f \in L^2(\tilde{\Omega}) \colon f|_\Omega,(f\circ M)|_\Omega \in \Dom(\fv_\Omega) \}$ as
\begin{equation*}
	\fv_{\tilde{\Omega}}[f , g]
	=
	\fv[ f|_\Omega , g|_\Omega ] + \fv_\Omega[ (f\circ M)|_\Omega , (g\circ M)|_\Omega ]
	.
\end{equation*}
With the representation $X^*g = (g + \mu(g\circ M))|_\Omega$ for $g \in L^2(\tilde{\Omega})$ with $\mu \in \{ \pm 1\}$ as above,
see~\cite[Section~4]{ES19}, it is then easy to see that $X$ maps $\Dom(\fv_\Omega)$ into $\Dom(\fv_{\tilde{\Omega}})$, $X^*$ maps
$\Dom(\fv_{\tilde{\Omega}})$ into $\Dom(\fv_\Omega)$, and that
\begin{equation*}
	\fv_{\tilde{\Omega}} [ Xf , g ]
	=
	\fv_\Omega [ f , X^*g ]
	\qquad \forall\,f \in \Dom(\fv_\Omega),\ g \in \Dom(\fv_{\tilde{\Omega}})
	.
\end{equation*}
If now $M$ is a reflection symmetry of the mapping $\RR^d \ni x \mapsto \abs{x}$, and hence $\Dom(\fv_{\tilde{\Omega}})$ agrees
with $\{ f \in L^2(\tilde{\Omega}) \colon \abs{\cdot}f \in L^2(\tilde{\Omega}) \}$ and
$\fv_{\tilde{\Omega}}[ f , g] = \langle \abs{\cdot}f , \abs{\cdot}g \rangle_{L^2(\tilde{\Omega})}$ for
$f,g \in \Dom(\fv_{\tilde{\Omega}})$, this establishes relation~\eqref{eq:extRelationForm} for the harmonic oscillators on
$\Omega$ and $\tilde{\Omega}$, respectively; recall here that~\eqref{eq:extRelationForm} is already known for the pure Laplacians.
Thus, we have the extension relation $H_{\tilde{\Omega}}X \supset XH_\Omega$ in this situation, and we obtain the following
result from the Bernstein-type inequality for the harmonic oscillator on $\RR^d$.

\begin{prop}\label{prop:harmonicOscillator}
	Let $\Omega$ be one of the following domains:
	\begin{enumerate}[(i)]

		\item a generalised rectangle of the form $\bigtimes_{j=1}^d (a_j,b_j)$ with $a_j,b_j \in \{ 0,\pm\infty \}$, $a_j < b_j$;

		\item the sector $S_{\pi/4} \subset \RR^2$;

		\item a finite Cartesian product of the above.

	\end{enumerate}
	Then, $\Ran\EE_{H_\Omega}(\lambda) \subset W^{\infty,2}(\Omega)$ for all $\lambda \geq 0$, and, for every choice of
	$\delta > 0$, every function $f \in \Ran\EE_{H_\Omega}(\lambda)$ satisfies a Bernstein-type inequality with respect to
	$C_B(m,\lambda) = (2\delta)^{2m} \ee^{\ee/\delta^2} (m!)^2 \ee^{2\sqrt{\lambda}/\delta}$.
\end{prop}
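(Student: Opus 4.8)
The plan is to reduce all three cases to the already-established Bernstein-type inequality for the harmonic oscillator on $\RR^d$ (from~\cite{BJPS18}), exactly in the spirit of how Proposition~\ref{prop:bernstein-pureLaplacian} reduces to the pure Laplacian on $\RR^d$. First I would recall from~\cite{BJPS18} that for $\Omega = \RR^d$ the operator $H_{\RR^d} = -\Delta_{\RR^d} + \abs{x}^2$ satisfies $\Ran\EE_{H_{\RR^d}}(\lambda) \subset W^{\infty,2}(\RR^d)$ (the spectral subspaces are finite linear combinations of Hermite functions, hence Schwartz functions) and that every such $f$ obeys a Bernstein-type inequality with the stated constant $C_B(m,\lambda) = (2\delta)^{2m}\ee^{\ee/\delta^2}(m!)^2\ee^{2\sqrt{\lambda}/\delta}$ for each fixed $\delta > 0$; this is the input, taken as given. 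The task is then purely to transport this inequality along reflections and Cartesian products.

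For case~(i), a generalised rectangle $\bigtimes_{j=1}^d(a_j,b_j)$ with each $a_j,b_j \in \{0,\pm\infty\}$ can be written, up to a permutation of coordinates, as a Cartesian product of factors each equal to $\RR$, $(0,\infty)$, or $(-\infty,0)$. Each half-line factor is obtained from $\RR$ by a single reflection in the corresponding coordinate hyperplane, and since $x \mapsto \abs{x}$ is invariant under such reflections, the extension-relation machinery developed just above the proposition applies: by Lemma~\ref{lemma:spectral-relation} (via the form identity~\eqref{eq:extRelationForm}) we get $\EE_{H_{\tilde\Omega}}(\lambda)X = X\EE_{H_\Omega}(\lambda)$, so $Xf$ lies in the harmonic-oscillator spectral subspace on the reflected domain $\tilde\Omega$, and then Lemma~\ref{lemma:singeReflection} transfers the Bernstein-type inequality back to $\Omega$ with the same constant. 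Iterating over the at most $d$ semibounded coordinates and invoking the Cartesian-product structure handles all of case~(i); note that here the relevant Cartesian-product step is elementary because the Bernstein sums for harmonic-oscillator functions on products of the relevant half-axes can be combined in the same way as in Lemma~\ref{lemma:bernstein-cartesian-product}, with the constant $C_B(m,\lambda)$ respecting the convolution structure $\sum_{k}C_B(k,\mu)C_B(m-k,\lambda-\mu)$ — and one must check this is dominated by $C_B(m,\lambda)$, which it is since the chosen $C_B$ is essentially $(m!)^2$ times a geometric factor and the binomial identity $\sum_k\binom{m}{k}^{-1}$-type bookkeeping works out (I would state this as a short lemma or inline computation rather than grind it).

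For case~(ii), the sector $S_{\pi/4}$ has opening angle $\pi/4 = \pi/n$ with $n = 4$, so it is obtained from $\RR^2$ by the successive reflections of Lemma~\ref{lemma:multipleReflection}; again $x\mapsto\abs{x}$ is invariant under all the reflections $M_k$ involved (they are reflections through lines through the origin), so the extension relation $H_{\RR^2}X \supset XH_{S_{\pi/4}}$ holds, $Xf \in \Ran\EE_{H_{\RR^2}}(\lambda)$ by Lemma~\ref{lemma:spectral-relation}, and Lemma~\ref{lemma:multipleReflection} transfers the Bernstein inequality (with the factor $2n$ cancelling as in~\eqref{eq:Bernstein:spectral:singleReflection}), preserving the constant. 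Case~(iii) is then handled by combining: any finite Cartesian product of domains from~(i) and~(ii) is treated by first forming products of the rectangle-type factors and the $\RR^2$ from which $S_{\pi/4}$ comes, and then applying the Cartesian variant of the multiple-reflection lemma, Lemma~\ref{lemma:multipleReflection:Cartesian}, exactly as in part~(v) of Proposition~\ref{prop:bernstein-pureLaplacian}.

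The main obstacle is not the reflection transport — that is by now routine given the lemmas — but rather making sure the Cartesian-product step is compatible with the specific, non-multiplicative-in-the-naive-sense constant $C_B(m,\lambda) = (2\delta)^{2m}\ee^{\ee/\delta^2}(m!)^2\ee^{2\sqrt\lambda/\delta}$. When one takes a product $\Omega_1\times\Omega_2$ with $f\in\Ran\EE_{H_{\Omega_1}\oplus H_{\Omega_2}}(\lambda)$ decomposed over the joint spectral decomposition, the resulting bound on $\sum_{\abs\alpha=m}\tfrac1{\alpha!}\norm{\partial^\alpha f}{}^2$ involves a sum over energy splittings $\mu + (\lambda-\mu) \le \lambda$ and over $k+(m-k)=m$, and one needs $\sum_{k=0}^m\binom{m}{k}C_B(k,\mu)C_B(m-k,\lambda-\mu) \le C_B(m,\lambda)$. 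The factor $(m!)^2$ is what makes this work: $\binom{m}{k}(k!)^2((m-k)!)^2 = m!\,k!\,(m-k)! \le (m!)^2$, the $(2\delta)^{2m}$ and $\ee^{\ee/\delta^2}$ pieces multiply correctly (the latter is $\ge 1$ so $\ee^{\ee/\delta^2}\ge(\ee^{\ee/\delta^2})^{1/2}\cdot$ as needed when combined over two factors), and $\ee^{2\sqrt\mu/\delta}\ee^{2\sqrt{\lambda-\mu}/\delta} \le \ee^{2\sqrt\lambda/\delta\cdot\sqrt2}$ — here one must be slightly careful, using $\sqrt\mu+\sqrt{\lambda-\mu}\le\sqrt{2\lambda}$, which would introduce an extra $\sqrt2$; I would absorb this either by a mild adjustment of $\delta$ or by noting that the product is ultimately only applied a bounded number $d$ of times so the accumulated constant can be reabsorbed, and I would check the bookkeeping at the level of~\cite{BJPS18}'s original one-dimensional estimate so that the $d$-dimensional constant comes out exactly as stated rather than with spurious dimensional factors. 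This verification, though elementary, is where care is required and where I would spend the bulk of the written proof.
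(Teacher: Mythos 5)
Your overall strategy (transport the $\RR^d$ result of~\cite{BJPS18} by reflections) is the right one, but the route you take through a Cartesian-product-of-Bernstein-inequalities step is both unnecessary and, as you yourself half-concede, does not close with the \emph{stated} constant. The convolution bound $\sum_{k}\binom{m}{k}C_B(k,\mu)C_B(m-k,\lambda-\mu)$ produces, besides the $\sqrt{2}$-loss from $\sqrt{\mu}+\sqrt{\lambda-\mu}\le\sqrt{2\lambda}$ that you noticed, an extra factor $\ee^{\ee/\delta^2}$ (the prefactors multiply, they do not stay fixed), a factor $m+1$ from the sum over $k$, and these losses compound over the $d$ factors. ``Absorbing this by a mild adjustment of $\delta$'' changes the constant $C_B(m,\lambda)$ you are asked to prove, so the proposition as stated would not follow. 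There is also a secondary gap in your case~(ii): the paper establishes the extension relation~\eqref{eq:extRelationForm} for the harmonic oscillator only for \emph{single} reflections, so invoking Lemma~\ref{lemma:multipleReflection} for $H_\Omega$ requires an additional verification you do not supply.

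The paper sidesteps all of this. For~(i), one does not prove the result on each one-dimensional factor and then multiply: one reflects the full $d$-dimensional domain coordinate by coordinate (each reflecting hyperplane passes through the origin because $a_j,b_j\in\{0,\pm\infty\}$, so $x\mapsto\abs{x}$ is preserved and the single-reflection extension relation for $H_\Omega$ applies), landing after at most $d$ reflections on $\Omega=\RR^d$, where Proposition~\ref{prop:BJPS} is applied directly in dimension $d$ with exactly the stated constant; Lemma~\ref{lemma:singeReflection} transfers the inequality back unchanged. For~(ii), a \emph{single} reflection across the diagonal $x_2=x_1$ (again a symmetry of $\abs{x}$) maps $S_{\pi/4}$ onto the quadrant $(0,\infty)^2$, which is already covered by~(i) — no unfolding to $\RR^2$ by multiple reflections is needed. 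For~(iii), a Cartesian product of quadrants and generalised rectangles of the form in~(i) is again a generalised rectangle of that form, so case~(iii) reduces to~(i) and~(ii) without any product lemma for Bernstein sums ever being invoked. You should restructure your argument along these lines; the computation you planned to ``spend the bulk of the written proof'' on can then be deleted entirely.
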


\begin{proof}
	(i).~%
	The above considerations justify to	reflect	every semibounded coordinate $(a_j,b_j)$ of $\Omega$ to $\RR$ by means of
	Lemma~\ref{lemma:singeReflection}; cf.~the proof of Proposition~\ref{prop:bernstein-pureLaplacian}\,(i). This reduces the stated
	generalised rectangles to the case $\Omega = \RR^d$. The latter has essentially been discussed in Proposition~4.3\,(i)
	of~\cite{BJPS18}, which we have adapted to our purposes in Proposition \ref{prop:BJPS} below.

	(ii).~%
	We may use Lemma~\ref{lemma:singeReflection} to reflect the sector $S_{\pi/4}$ to the quadrant $(0,\infty)^2$, which can be
	handled by~(i).

	(iii).~%
	Since the Cartesian product of quadrants and generalised rectangles of the form in~(i) are again generalised rectangles
	of the same form, this follows analogously as in~(ii).
\end{proof}%

\section{Proof of Theorem~\ref{thm:LS}}\label{sec:main_proof}

Our main result, Theorem~\ref{thm:LS}, follows from a more general statement that can be formulated without an operator theoretic
framework and is at its core complex analytic in nature. This allows one to prove the result independently of the specific
operator and the chosen boundary conditions at hand. In this regard, we use the adapted notion for Bernstein-type inequalities
introduced in Section~\ref{sec:bernstein} here.

It is also convenient to drop in this section the parameters $\eta$ and $\varrho$ from the notion of coverings: For a domain
$\Omega \subset \RR^d$ and $\kappa \ge 1$, $l = (l_1,\dots,l_d) \in (0,\infty)^d$, we call a family $\{ Q_j \}_{j\in J}$ of
non-empty bounded convex open subsets $Q_j \subset \Omega$, with $J$ a finite or countably infinite index set,
a~\emph{$(\kappa,l)$-covering of $\Omega$} if
\begin{enumerate}
 \renewcommand{\theenumi}{\roman{enumi}}
 \item $\Omega \setminus \bigcup_{j\in J} Q_j$ has Lebesgue measure zero;\label{it:2:covering}
 \item $\sum_{j\in J} \norm{g}{L^2(Q_j)}^2 \le \kappa \norm{g}{L^2(\Omega)}^2$ for all $g \in L^2(\Omega)$;\label{it:2:overlap}
 \item each $Q_j$ lies in a hyperrectangle with sides of length $l$ parallel to coordinate axes.\label{it:2:supercube}
\end{enumerate}
Again, the quantity $\kappa$ in~\eqref{it:2:overlap} can be interpreted as the maximal essential overlap between the sets $Q_j$.

We now state the general result that is at the core of Theorem~\ref{thm:LS}.
\begin{prop}\label{prop:LS-abstract-version}
 Let $\Omega \subset \RR^d$ be a domain, $\{Q_j\}_{j \in J}$ a $(\kappa,l)$-covering of $\Omega$, and let
 $f \in W^{\infty,2}(\Omega)$ satisfy a Bernstein-type inequality with respect to $C_B \colon \NN_0 \to (0,\infty)$ such that
 \begin{equation}
  \label{eq:BernsteinSum}
  h := \sum_{m \in \NN_0} C_B(m)^{1/2} \frac{(10\norm{l}{1})^m}{m!} < \infty.
 \end{equation}
 Then, for every measurable subset $\omega \subset \Omega$ and every choice of linear bijections $\Psi_j \colon \RR^d \to \RR^d$,
 $j \in J$, with $\nu := \inf_{j \in J} \abs{\bijection_j(Q_j \cap \omega)} / \diam(\bijection_j(Q_j))^d > 0$ we have
 \begin{equation}\label{eq:LS-abstract-version}
  \norm{f}{L^2(\Omega)}^2
  \le
  \frac{\kappa}{6}\Bigl( \frac{24d\tau_d}{\nu} \Bigr)^{2\frac{\log\kappa}{\log 2} + 4\frac{\log h}{\log 2} + 5}
    \norm{f}{L^2(\omega)}^2.
 \end{equation}
\end{prop}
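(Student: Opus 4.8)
The proof follows Kovrijkine's approach, adapted to the abstract Bernstein setting. The central object is the auxiliary function $F \colon \RR^d \to \CC$ obtained from $f$ by analytic continuation. More precisely, on each covering element $Q_j$ one considers the expansion of $f$ into a power series; the Bernstein-type inequality together with the summability condition \eqref{eq:BernsteinSum} guarantees that this series has radius of convergence at least $10\norm{l}{1}$ around every point of $Q_j$, so that $f$ extends to a holomorphic function on a complex neighbourhood of $Q_j$, with $L^\infty$-bounds on suitable polydiscs controlled by $h$ and $\norm{f}{L^2(Q_j)}$. This analyticity is recorded in Lemma~\ref{lem:Bernstein} (referenced in the excerpt), and is the starting point.

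\textbf{Good and bad elements.} Next I would split the index set $J$ into \emph{good} and \emph{bad} elements. An element $Q_j$ is called bad if $\norm{f}{L^2(Q_j)}^2$ is anomalously large compared with the average, say $\norm{f}{L^2(Q_j)}^2 > \Lambda^{?}$ times a weighted tail involving $h$ and the neighbouring norms — the precise threshold is chosen so that, by the overlap property \eqref{it:2:overlap} (the constant $\kappa$), the bad elements carry only a controlled fraction of $\norm{f}{L^2(\Omega)}^2$; concretely one arranges $\sum_{j \text{ bad}} \norm{f}{L^2(Q_j)}^2 \le \tfrac12 \norm{f}{L^2(\Omega)}^2$. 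For the good elements, the ratio of the $L^\infty$-norm of $F$ on the relevant polydisc to $\norm{f}{L^2(Q_j)}$ is bounded by a fixed power of $h$ (up to dimensional factors). This dichotomy is precisely where the quantity $h$ from \eqref{eq:BernsteinSum} enters the exponent in \eqref{eq:LS-abstract-version}.

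\textbf{The local estimate.} The heart is a local Remez/Turán-type inequality (Lemma~\ref{lem:local_estimate}, via the one-dimensional Lemma~\ref{lem:kovrijkine_orig}): for a good element $Q_j$, using the linear bijection $\bijection_j$ to normalise the geometry so that $\bijection_j(Q_j)$ has volume comparable to the $d$-th power of its diameter (this is exactly what property \eqref{it:bijection}, i.e.\ the parameter $\nu$, provides), one shows
\[
 \norm{f}{L^2(Q_j)}^2 \le \Bigl( \frac{C d\tau_d}{\nu} \Bigr)^{A\log h + B} \norm{f}{L^2(Q_j \cap \omega)}^2,
\]
where the exponent is linear in $\log h$ with absolute constants $A,B$. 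This is done by restricting $F$ to a complex line, applying a one-variable estimate comparing $\sup$ over an interval, $L^2$ over a small subinterval, and the $L^\infty$-bound on a disc (the $L^\infty/L^2$ ratio being $\lesssim h$ to a power), then integrating over the remaining directions with Fubini; the thickness of $\omega$ enters through $\nu$, which bounds from below the measure of $\bijection_j(Q_j\cap\omega)$ relative to $\diam(\bijection_j(Q_j))^d$. The main obstacle — and the technically most delicate point — is getting this local estimate with the geometry fully absorbed into $\nu$ and with clean, explicit dependence of the exponent on $h$, since one must carefully track how the linear transformation $\bijection_j$ distorts both the $L^2$-norms (a Jacobian factor, harmless since it cancels between numerator and denominator) and the analyticity domain.

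\textbf{Conclusion.} Summing the local estimates over the good elements, using \eqref{it:2:overlap} on the left (factor $\kappa$) and on the right (the sets $Q_j\cap\omega \subset \omega$ with overlap $\kappa$), and combining with the bad-element bound $\sum_{j \text{ bad}} \norm{f}{L^2(Q_j)}^2 \le \tfrac12\norm{f}{L^2(\Omega)}^2$, one gets
\[
 \norm{f}{L^2(\Omega)}^2 \le \tfrac12 \norm{f}{L^2(\Omega)}^2 + \kappa \Bigl( \tfrac{Cd\tau_d}{\nu} \Bigr)^{A\log h + B}\norm{f}{L^2(\omega)}^2,
\]
and absorbing the first term, optimising the constants, and bookkeeping the powers of $\kappa$ coming from the bad/good threshold (which contributes the $2\log\kappa/\log 2$ term in the exponent) yields exactly \eqref{eq:LS-abstract-version}. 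Theorem~\ref{thm:LS} then follows by specialising $\omega$ to a $(\gamma,\rho)$-thick set and noting that, for a $(\kappa,\rho,l,\eta)$-covering, thickness gives $\abs{Q_j\cap\omega}\ge \gamma\rho^d$ while \eqref{it:geometry}–\eqref{it:bijection} bound $\diam(\bijection_j(Q_j))^d$ from above by $(\eta^{-1}\tau_d^{-1}) l_1\cdots l_d$-type quantities, so that $\nu \gtrsim \gamma\eta\rho^d/(l_1\cdots l_d)$ up to dimensional constants.
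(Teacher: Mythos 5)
Your architecture is the paper's: analyticity from the Bernstein inequality plus \eqref{eq:BernsteinSum}, a good/bad dichotomy on the covering elements with the bad ones carrying at most half the mass, Kovrijkine's one-dimensional Lemma~\ref{lem:kovrijkine_orig} powering a local estimate on good elements, and a summation using the overlap constant $\kappa$. However, two of the mechanisms you describe are not the ones that make the argument work, and as stated they would fail. First, the good/bad criterion: in the paper $Q_j$ is declared bad when the \emph{local Bernstein sums} get out of control, i.e.\ when $\sum_{\abs{\alpha}=m}\frac{1}{\alpha!}\norm{\partial^\alpha f}{L^2(Q_j)}^2 > 2^{m+1}\kappa\frac{C_B(m)}{m!}\norm{f}{L^2(Q_j)}^2$ for some $m\ge 1$ --- not when $\norm{f}{L^2(Q_j)}^2$ is ``anomalously large compared with the average''. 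The distinction is essential: the good-element condition is exactly what yields, by a pigeonhole/integration argument, a point $x_0\in Q_j$ with $\abs{\partial^\alpha f(x_0)}^2 \le 4^{m+1}\kappa\, C_B(m)\norm{f}{L^2(Q_j)}^2/\abs{Q_j}$ for all $\alpha$, and these pointwise derivative bounds are what make the Taylor series converge on $x_0 + D_{5l}$ with the uniform bound $M_j \le 2\kappa^{1/2}h$ (whence the $2\log\kappa/\log 2$ in the exponent). A criterion phrased only in terms of the size of $\norm{f}{L^2(Q_j)}$ gives no control on derivatives and hence no analytic extension with controlled modulus on good elements.

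Second, the local estimate: ``applying a one-variable estimate \dots\ then integrating over the remaining directions with Fubini'' does not close. The one-dimensional comparison on a line $\ell$ degenerates as $\abs{\ell\cap Q\cap\omega}/\abs{\ell\cap Q}\to 0$, and for individual lines there is no lower bound on this ratio --- $\omega\cap Q$ may miss most lines in any fixed direction entirely. The paper's Lemma~\ref{lem:local_estimate} avoids this by introducing the sublevel set $W=\{x\in Q: \abs{f(x)}< c\,\norm{f}{L^2(Q)}/\sqrt{\abs{Q}}\}$, using polar coordinates around a point $y$ with $\abs{f(y)}\ge\norm{f}{L^2(Q)}/\sqrt{\abs{Q}}$ to select a \emph{single} segment $I$ starting at $y$ with $\abs{I\cap W}/\abs{I}\ge \abs{\bijection(W)}/(d\tau_d\diam(\bijection(Q))^d)$, applying Lemma~\ref{lem:kovrijkine_orig} on that segment to compare suprema (not $L^2$ norms), and concluding $\abs{Q\cap\omega}\ge 2\abs{W}$; only then does one integrate $\abs{f}^2$ over $(Q\cap\omega)\setminus W$ to get the $L^2$ statement. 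Without this sublevel-set device (or an equivalent ``most lines are good'' counting), the Fubini step you propose is a genuine gap.
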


Note that the condition $\nu > 0$ in the above result necessarily requires $\omega$ to have positive measure.

Before we get to the proof of Proposition~\ref{prop:LS-abstract-version}, we explain how this leads to Theorem~\ref{thm:LS}.

\begin{proof}[Proof of Theorem~\ref{thm:LS}]
 It is clear that the hypotheses of Proposition~\ref{prop:LS-abstract-version} are satisfied for each $f \in \Ran\EE_H(\lambda)$
 with $C_B(m) = C_B(m, \lambda)$ and $h = h(\lambda)$. Moreover, choosing the bijective linear mapping
 $\bijection_j \colon \RR^d \to \RR^d$ according to the $(\kappa,\rho,l,\eta)$-covering such that
 $\abs{\bijection_j(Q_j)} \ge \eta \diam(\bijection_j(Q_j))^d$, for each $j \in J$ we have
 \begin{equation*}
  \frac{\abs{\bijection_j(Q_j\cap\omega)}}{\diam(\bijection_j(Q_j))^d}
  =
  \frac{\abs{Q_j \cap \omega}}{\abs{Q_j}} \cdot \frac{\abs{\bijection_j(Q_j)}}{\diam(\bijection_j(Q_j))^d}
  \ge
  \frac{\gamma\rho^d}{l_1\cdot\dots\cdot l_d} \cdot \eta
  ,
 \end{equation*}
 where we used that $Q_j$ is contained in a hyperrectangle of the form $y + \bigtimes_{j=1}^d (0,l_j)$ and that $\omega$ is
 $(\gamma,\rho)$-thick in $\Omega$ and each $Q_j$ contains a hypercube of the form $x + (0,\rho)^d$. This provides a corresponding
 lower bound on $\nu$. The inequality~\eqref{eq:LS} is now a direct consequence of~\eqref{eq:LS-abstract-version}.
\end{proof}%

The rest of this section is devoted to the proof of Proposition~\ref{prop:LS-abstract-version}. This proof is essentially based
on~\cite{EV20} and~\cite{Egi21}, which are in turn inspired by~\cite{Kovrijkine:01,Kovrijkine:thesis};
cf.~also~\cite{WWZZ19,BJPS18}. We give a brief outline in advance:

The Bernstein-type inequality together with condition~\eqref{eq:BernsteinSum} ensures that the given function $f$ is analytic in
$\Omega$, see Lemma~\ref{lem:Bernstein} below. On some of the $Q_j$ of a specific type, called~\emph{good} elements of the
covering, we can even extend $f$ to an analytic function on a sufficiently large complex neighbourhood of $Q_j$, which allows to
employ methods from complex analysis and obtain a local estimate related to~\eqref{eq:LS-abstract-version} on each of such good
$Q_j$, see Lemma~\ref{lem:local_estimate} below. It turns out that the local estimates can be made uniform over the class of good
$Q_j$ and that the contribution of this class is so large that these local estimates can finally be summed up to the desired
global estimate~\eqref{eq:LS-abstract-version}, see Subsection~\ref{subsec:good_and_bad} below.

\subsection{Analyticity}\label{subsec:analyticity}

We first show that a Bernstein-type inequality together with condition~\eqref{eq:BernsteinSum} already implies analyticity of the
function under consideration. The precise statement reads as follows.

\begin{lemma}\label{lem:Bernstein}
 Let $\Omega \subset \RR^d$ be open, and let $f \in W^{\infty,2}(\Omega)$ satisfy a Bernstein-type inequality with respect to
 $C_B \colon \NN_0 \to (0,\infty)$ such that
 \begin{equation}\label{eq:analytic}
  \sum_{m = 0}^\infty C_B(m)^{1/2} \frac{r^m}{m!} < \infty
 \end{equation}
 for some $r > 0$. Then, $f$ is analytic in $\Omega$, that is, $f$ can locally be expanded into a convergent power series.
\end{lemma}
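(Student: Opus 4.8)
\textbf{Proof plan for Lemma~\ref{lem:Bernstein}.}
The strategy is to work locally: fix a point $x_0 \in \Omega$ and a closed ball $\overline{B(x_0,\varrho_0)} \subset \Omega$, and show that the Taylor series of $f$ at $x_0$ converges to $f$ on a (possibly smaller) ball. The Bernstein-type inequality controls the $L^2(\Omega)$-norms of all derivatives $\partial^\alpha f$; the plan is to upgrade this to pointwise control near $x_0$ via a Sobolev-type estimate, and then to estimate the Taylor remainder.

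First I would extract from~\eqref{eq:Bernstein} and~\eqref{eq:analytic} a bound on $\norm{\partial^\alpha f}{L^2(\Omega)}$ for each multi-index $\alpha$. Indeed, from $\sum_{\abs{\alpha}=m} \frac{1}{\alpha!} \norm{\partial^\alpha f}{L^2(\Omega)}^2 \le \frac{C_B(m)}{m!} \norm{f}{L^2(\Omega)}^2$ we get, for a single $\alpha$ with $\abs{\alpha}=m$, the bound $\norm{\partial^\alpha f}{L^2(\Omega)}^2 \le \alpha!\,\frac{C_B(m)}{m!} \norm{f}{L^2(\Omega)}^2 \le C_B(m) \norm{f}{L^2(\Omega)}^2$, using $\alpha! \le m!$. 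So $\norm{\partial^\alpha f}{L^2(\Omega)} \le C_B(\abs{\alpha})^{1/2} \norm{f}{L^2(\Omega)}$. Next, for any multi-index $\beta$, apply this to $\partial^\beta f$ together with the $L^2$-bound $\norm{\partial^\alpha(\partial^\beta f)}{L^2(\Omega)} = \norm{\partial^{\alpha+\beta}f}{L^2(\Omega)} \le C_B(\abs{\alpha}+\abs{\beta})^{1/2}\norm{f}{L^2(\Omega)}$; this gives $L^2$-control of all derivatives of $\partial^\beta f$ near $x_0$, which by the Sobolev embedding $W^{k,2}(B) \hookrightarrow C(\overline{B})$ for $k > d/2$ (applied on a fixed ball $B = B(x_0,\varrho_0)$ with $\overline B \subset \Omega$) yields a pointwise bound
\begin{equation*}
 \sup_{x \in \overline{B(x_0,\varrho_0/2)}} \abs{\partial^\beta f(x)}
 \le
 c(d,\varrho_0) \sum_{\abs{\alpha} \le \lceil d/2\rceil + 1} \norm{\partial^{\alpha+\beta} f}{L^2(B)}
 \le
 C(d,\varrho_0) \sup_{0 \le j \le \lceil d/2 \rceil + 1} C_B(\abs{\beta}+j)^{1/2} \norm{f}{L^2(\Omega)}.
\end{equation*}

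The next step is to insert this into Taylor's theorem with remainder. For $x$ with $\abs{x - x_0}$ small, the remainder of order $N$ of the Taylor expansion of $f$ at $x_0$ is bounded by $\sum_{\abs{\beta}=N} \frac{\abs{x-x_0}^\beta}{\beta!} \sup \abs{\partial^\beta f}$, and summing over the shell $\abs{\beta}=N$ (there are $\binom{N+d-1}{d-1} \le C'_d (N+1)^{d-1}$ such $\beta$, and $\sum_{\abs{\beta}=N}\frac{\abs{x-x_0}^\beta}{\beta!} = \frac{(\abs{x-x_0}_1)^N}{N!}$ where $\abs{\cdot}_1$ is the $\ell^1$-norm) this remainder is controlled by a constant times $(N+1)^{d-1}\,C_B(N+j_0)^{1/2}\,\frac{s^N}{N!}$ for $\abs{x-x_0}_1 \le s$ and some fixed $j_0 \le \lceil d/2\rceil+1$. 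Choosing $s$ with $0 < s < r$ (where $r$ is from~\eqref{eq:analytic}), I need this to tend to $0$ as $N \to \infty$. Since $C_B(N+j_0)^{1/2}\frac{r^{N+j_0}}{(N+j_0)!} \to 0$ by convergence of the series~\eqref{eq:analytic}, one can absorb the shifts $j_0$, the polynomial factor $(N+1)^{d-1}$, and the ratio $(s/r)^N$: writing $C_B(N+j_0)^{1/2}\frac{s^N}{N!} = \bigl(C_B(N+j_0)^{1/2}\frac{r^{N+j_0}}{(N+j_0)!}\bigr)\cdot\frac{(N+j_0)!}{N!\,r^{j_0}}\cdot(s/r)^N$ and noting $(N+j_0)!/N! \le (N+j_0)^{j_0}$ is polynomially bounded while $(s/r)^N$ decays geometrically, the product goes to $0$. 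Hence the Taylor remainder vanishes as $N \to \infty$ uniformly on $\{\abs{x-x_0}_1 \le s\}$, so $f$ equals its (convergent) Taylor series there, i.e.\ $f$ is analytic in a neighbourhood of $x_0$. Since $x_0 \in \Omega$ was arbitrary, $f$ is analytic in $\Omega$.

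The main obstacle is the bookkeeping in the last step: one must verify that convergence of the \emph{sum}~\eqref{eq:analytic} — rather than, say, a clean bound of the form $C_B(m) \le K^m (m!)^2$ — is genuinely enough to kill the remainder, and in particular that the finitely many index-shifts $j_0$ coming from the Sobolev embedding and the polynomial shell-counting factor $(N+1)^{d-1}$ do not spoil this. The key mechanism, as indicated above, is that convergence of $\sum_m a_m$ with $a_m = C_B(m)^{1/2} r^m/m! \ge 0$ forces $a_m \to 0$, and that this suffices once a geometric slack $(s/r)^N < 1$ is available to absorb polynomial and bounded-index-shift losses; I would make this precise with an elementary estimate rather than any sharp asymptotics. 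A minor additional point to address is that $f \in W^{\infty,2}(\Omega)$ only a priori gives that $\partial^\beta f$ is a locally $L^2$ function; the Sobolev embedding step is what makes all $\partial^\beta f$ continuous (indeed the series argument shows real-analyticity, which is of course stronger), so one should phrase the argument so that the pointwise values used in Taylor's formula are legitimate — e.g.\ by first noting $f \in C^\infty(\Omega)$ via $W^{\infty,2}_{\mathrm{loc}} \subset C^\infty$ and then running the quantitative estimate.
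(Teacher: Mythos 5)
Your proposal is correct and follows essentially the same route as the paper: both convert the $L^2$ Bernstein bounds into sup-norm bounds on all derivatives via a Sobolev embedding on a small ball, and then use the resulting factorial-type growth (controlled by convergence of the series in~\eqref{eq:analytic}) to conclude analyticity. The only difference is cosmetic: the paper packages the final step by deriving the bound $\norm{\partial^\alpha f}{L^\infty(B)} \le c\,(m+d)!\,r^{-(m+d)}$ and citing the standard real-analyticity criterion from Krantz--Parks, whereas you estimate the Taylor remainder directly with a geometric slack $(s/r)^N$; both are valid.
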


\begin{proof}
 Without loss of generality, we may assume that $r < 1$. Let $y \in \Omega$, and let $B = B_\epsilon(y)$ be an open ball around $y$
 with $\epsilon  < r/d$ such that its closure is contained in $\Omega$. Since $B$ satisfies the cone condition, by Sobolev
 embedding there exists a constant $c_d > 0$, depending only on the dimension, such that
 \begin{equation*}
  \norm{g}{L^\infty(B)}
  \le
  c_d \norm{g}{W^{d,2}(B)}
  \quad\text{ for all }\quad
  g \in W^{d,2}(B),
 \end{equation*}
 see, e.g.,~\cite[Theorem~4.12]{AF03}. Applying this to $g = \partial^\alpha f|_B$ with $\abs{\alpha} = m \in \NN_0$, we obtain
 \begin{align*}
  \norm{ \partial^\alpha f }{L^\infty(B)}^2
  &\le
  c_d^2 \norm{\partial^\alpha f}{W^{d,2}(B)}^2 \le c_d^2 \norm{\partial^\alpha f}{W^{d,2}(\Omega)}^2\\
  &=
  c_d^2 \sum_{\abs{\beta} \le d} \norm{\partial^{\alpha+\beta}f}{L^2(\Omega)}^2
   \le
   c_d^2 \sum_{k=m}^{m+d} \sum_{\abs{\beta}=k} \norm{\partial^\beta f}{L^2(\Omega)}^2\\   
  &\le
  c_d^2 \sum_{k=m}^{m+d} \sum_{\abs{\beta}=k} \frac{k!}{\beta!} \norm{\partial^\beta f}{L^2(\Omega)}^2
  \le
  c_d^2 \norm{f}{L^2(\Omega)}^2 \sum_{k=m}^{m+d} C_B(k).
 \end{align*}
 Taking into account that $r < 1$, we have
 \begin{equation*}
  \sum_{k=m}^{m+d} C_B(k)^{1/2} \frac{r^k}{k!}
  \ge
  \frac{r^{m+d}}{(m+d)!} \sum_{k=m}^{m+d} C_B(k)^{1/2},
 \end{equation*}
 so that the above yields
 \begin{align*}
  \norm{ \partial^\alpha f }{L^\infty(B)}
  &\le
  c_d \norm{f}{L^2(\Omega)} \biggl( \sum_{k=m}^{m+d} C_B(k) \biggr)^{1/2}
   \le
   c_d \norm{f}{L^2(\Omega)} \sum_{k=m}^{m+d} C_B(k)^{1/2}\\
  &\le
  c_d \norm{f}{L^2(\Omega)} \frac{(m+d)!}{r^{m+d}} \sum_{k=m}^{m+d} C_B(k)^{1/2} \frac{r^k}{k!}\\
  &\le
  c_d \norm{f}{L^2(\Omega)} \frac{(m+d)!}{r^{m+d}} \sum_{k = 0}^\infty C_B(k)^{1/2} \frac{r^k}{k!}
  .
 \end{align*}
 In view of~\eqref{eq:analytic}, this is sufficient to conclude that the Taylor series of $f$ converges absolutely in
 $B_\epsilon(y)$ and agrees with $f$ there, see also~\cite[Proposition~2.2.10]{KP02}. Hence, $f$ is analytic in $\Omega$.
\end{proof}%

\begin{rmk}
 \label{rmk:identity_theorem}
 As in the complex analytic case, an analytic function $f \colon Q \to \CC$ on a domain $Q \subset \RR^d$ that vanishes on a
 non-empty open subset $U \subset Q$ must vanish on the whole of $Q$. The proof is essentially the same as for complex analytic
 functions, cf., e.g.,~\cite[Conclusion~1.2.12]{Schei05}.
\end{rmk}

\subsection{The local estimate}\label{subsec:local_estimate}

For $r > 0$ denote $D(r) := \{ z \in \CC \colon \abs{z} < r \}$. 
The lemma below is an adjusted version of~\cite[Lemma 1]{Kovrijkine:01}, where it is presented for functions that are analytic in
$D(0,5)$. Indeed, its proof shows that it suffices to have analyticity in a complex disk of radius strictly larger
than $4$.

\begin{lemma}[{\cite[Lemma~1]{Kovrijkine:01}}]\label{lem:kovrijkine_orig}
 Let $\phi \colon D(4+\epsilon) \to \CC$ for some $\epsilon > 0$ be an analytic function with $\abs{\phi(0)}\geq 1$. Moreover, let
 $E \subset [0,1]$ be measurable with positive measure. Then,
 \begin{equation*}
  \sup_{t\in [0,1]} \abs{\phi(t)}
  \leq
  \left( \frac{12}{\abs{E}} \right)^{2\frac{\log M_{\phi}}{\log 2}} \sup_{t\in E} \abs{\phi(t)},
 \end{equation*}
 where $M_{\phi} = \sup_{z\in D(4)} \abs{\phi(z)}$.
\end{lemma}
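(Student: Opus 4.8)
\textbf{Proof strategy for Lemma~\ref{lem:kovrijkine_orig}.}
The plan is to reduce the statement to two classical facts about analytic functions: a Jensen-type bound on the number of zeros in a disk, and the Remez inequality for polynomials on an interval. First I would normalise: since $\abs{\phi(0)} \ge 1$, the maximum $M_\phi$ of $\abs\phi$ on $D(4)$ is at least $1$, so $\log M_\phi / \log 2 \ge 0$ and the exponent makes sense. The key structural input is to count the zeros of $\phi$ in a disk slightly smaller than $D(4+\epsilon)$ — say in $D(2)$ or $D(3)$ — via Jensen's formula: because $\abs{\phi(0)} \ge 1$ and $\abs\phi \le M_\phi$ on a disk of radius $r_0$ with $2 < r_0 < 4+\epsilon$, the number $n$ of zeros of $\phi$ in $D(2)$ satisfies roughly $n \le \frac{\log M_\phi}{\log(r_0/2)}$. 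Choosing $r_0 = 4$ gives $n \le \log M_\phi / \log 2$.

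Next I would factor out these zeros: write $\phi(z) = B(z) \psi(z)$ on $D(2)$, where $B$ is the finite Blaschke-type product $\prod_{k} \frac{z - a_k}{4 - \bar a_k z / 2}$ (suitably scaled to $D(2)$) over the zeros $a_k \in D(2)$, and $\psi$ is analytic and zero-free on $D(2)$. On the real interval $[0,1]$ the factor $B$ is harmless — $\abs{B(t)}$ is bounded above and below by absolute constants there — so the oscillation of $\abs\phi$ on $[0,1]$ is controlled by that of $\abs\psi$. Since $\psi$ is zero-free on a disk containing $[0,1]$ with room to spare, $\log\abs\psi$ is harmonic, and one compares $\sup_{[0,1]}\abs\psi$ with $\sup_E \abs\psi$ by a Remez-type / Cartan-type estimate: the logarithm of $\psi$ behaves like a polynomial-controlled function, and on a set $E$ of measure $\abs E$ one loses at most a factor $(C/\abs E)^{\deg}$, with the effective degree again bounded by $n \lesssim \log M_\phi / \log 2$. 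Tracking the absolute constants carefully (this is where the $12$ and the factor $2$ in the exponent come from) yields the stated bound.

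The main obstacle is the bookkeeping of constants: one must choose the radii in Jensen's formula, the domain on which $\psi$ is zero-free, and the version of the Remez inequality so that all the numerical constants combine into exactly $(12/\abs E)^{2\log M_\phi / \log 2}$, and in particular so that analyticity on $D(4+\epsilon)$ for \emph{any} $\epsilon > 0$ already suffices — the $5$ in Kovrijkine's original formulation is not actually needed, only a radius strictly larger than $4$, because the zero-counting step only requires comparing values on $D(4)$ with the value at the origin and a tiny collar beyond radius $4$ to keep $\phi$ analytic up to the closed disk $\overline{D(4)}$. I would therefore be careful to invoke Jensen on $\overline{D(4)}$ rather than on a strictly larger disk, using the extra $\epsilon$ only to guarantee that $\phi$ is analytic on a neighbourhood of $\overline{D(4)}$ so that $M_\phi = \sup_{D(4)}\abs\phi$ is finite and attained on the boundary. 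Everything else is a routine, if delicate, chain of estimates that I would not grind through here.
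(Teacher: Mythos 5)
First, a point of comparison: the paper does not actually reprove this lemma. It is quoted from Kovrijkine's paper, and the only addition is the observation, stated in the sentence preceding the lemma, that analyticity on $D(4+\epsilon)$ suffices in place of Kovrijkine's $D(0,5)$. Your explanation of why this relaxation works --- Jensen's formula is run on the circle of radius $4$, so one only needs $\phi$ analytic on a neighbourhood of $\overline{D(4)}$ --- matches the paper's remark, and your overall architecture (Jensen zero count in $D(2)$ giving $n \le \log M_\phi/\log 2$, factorisation into a finite product over the zeros times a zero-free function, then a Remez-type estimate) is indeed the skeleton of Kovrijkine's argument.

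There is, however, a genuine error in the middle of your sketch: you have the roles of the two factors exactly swapped. You claim that on $[0,1]$ the Blaschke-type factor $B$ is ``bounded above and below by absolute constants'', and you then propose to apply the Remez/Cartan estimate to the zero-free factor $\psi$, speaking of its ``effective degree''. Both halves of this are wrong. The zeros $a_k$ of $\phi$ in $D(2)$ may perfectly well lie on, or arbitrarily close to, the interval $[0,1]$ --- this is the generic and interesting case --- so $\abs{B(t)}$ is not bounded below on $[0,1]$ by any absolute constant; it can vanish there. It is precisely this factor, written as the polynomial $P(z) = \prod_k (z-a_k)$, to which the Remez inequality applies, giving $\sup_{[0,1]}\abs{P} \le (4/\abs{E})^{n}\sup_E\abs{P}$ with $n \le \log M_\phi/\log 2$. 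The zero-free factor $\psi$, by contrast, has no degree: since $\log\abs{\psi}$ is harmonic on a disk containing $[0,1]$ with room to spare, Harnack's inequality controls the ratio $\sup_{[0,1]}\abs{\psi}/\inf_{[0,1]}\abs{\psi}$ by a power of $M_\phi$ (not by a power of $1/\abs{E}$), and it is this power of $M_\phi$, absorbed into $(12/\abs{E})^{\cdots}$ using $\abs{E}\le 1$, that produces the extra factor of $2$ in the exponent. As written, your argument stalls at the false claim about $B$; with the two tools reassigned to the correct factors (Remez for $P$, Harnack for $\psi$) the proof goes through as in Kovrijkine, and the constant-tracking you defer is then genuinely routine.
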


For $r = (r_1,\dots,r_d) \in (0,\infty)^d$ let $D_r := D(r_1) \times \dots \times D(r_d) \subset \CC^d$. The above lemma can now
be combined with a dimension reduction argument to obtain the following local estimate, which is inspired
by~\cite{Kovrijkine:01}. Its formulation only requires that the considered function $f$ has a complex analytic extension to a
sufficiently large complex neighbourhood of $Q = Q_j$. A similar statement is implicitly contained
in~\cite[Section~5]{EV20},~\cite[Section 3.3.3]{BJPS18}, and~\cite[Proof of Lemma~2.1]{WWZZ19}.

\begin{lemma}\label{lem:local_estimate}
 Let $Q \subset \RR^d$ be a non-empty bounded convex open set contained in a hyperrectangle with sides of length
 $l = (l_1,\dots,l_d) \in (0,\infty)^d$ parallel to coordinate axes. Moreover, let $f \colon Q \to \CC$ be a non-vanishing
 function having an analytic extension $F \colon Q + D_{4l} \to \CC$ with bounded modulus.
 
 Then, for every measurable set
 $\omega \subset \RR^d$ and every linear bijection $\bijection \colon \RR^d \to \RR^d$ we have
 \begin{align}
  \norm{f}{L^2(Q \cap \omega)}^2
  &\ge
  \frac{1}{2} \Bigl( \frac{\abs{\bijection(Q \cap \omega)}}{24d\tau_d\diam(\bijection(Q))^d} \Bigr)^{4\frac{\log M}{\log 2}}
   \cdot
   \frac{\abs{Q \cap \omega}}{\abs{Q}}
   \cdot
   \norm{f}{L^2(Q)}^2\label{eq:local_estimate:1}\\
  &\ge
  12 \Bigl( \frac{\abs{\bijection(Q \cap \omega)}}{24d\tau_d\diam(\bijection(Q))^d} \Bigr)^{4\frac{\log M}{\log 2}+1}
   \norm{f}{L^2(Q)}^2
  \label{eq:local_estimate:2}
 \end{align}
 with
 \begin{equation*}
  M := \frac{\sqrt{\abs{Q}}}{\norm{f}{L^2(Q)}} \cdot \sup_{z \in Q + D_{4l}} \abs{F(z)} \ge 1.
 \end{equation*}
\end{lemma}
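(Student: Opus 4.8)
The plan is to prove Lemma~\ref{lem:local_estimate} by first establishing the slightly weaker bound~\eqref{eq:local_estimate:1} and then deriving~\eqref{eq:local_estimate:2} from it by an elementary manipulation. For~\eqref{eq:local_estimate:2}, note that $\abs{Q\cap\omega}/\abs{Q} = \abs{\bijection(Q\cap\omega)}/\abs{\bijection(Q)}$, and since $Q$ lies in a hyperrectangle with sides $l$ and contains no ball of radius exceeding $\diam(\bijection(Q))/2$ after transforming, one has $\abs{\bijection(Q)} \le \tau_d \diam(\bijection(Q))^d$; hence $\abs{Q\cap\omega}/\abs{Q} \ge \abs{\bijection(Q\cap\omega)}/(\tau_d\diam(\bijection(Q))^d) \ge 24d \cdot \bigl(\abs{\bijection(Q\cap\omega)}/(24d\tau_d\diam(\bijection(Q))^d)\bigr)$, and the factor $\tfrac12 \cdot 24 d \ge 12$ together with raising the bracketed quantity (which is $\le 1$) to one higher power gives~\eqref{eq:local_estimate:2}. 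So the real content is~\eqref{eq:local_estimate:1}.

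For~\eqref{eq:local_estimate:1} I would argue as follows. First reduce to the case $\bijection = \id$: replacing $Q$ by $\bijection(Q)$, $\omega$ by $\bijection(\omega)$, and $f$ by $f\circ\bijection^{-1}$ changes both sides of the inequality by the same Jacobian factor $\abs{\det\bijection}$, and the quantity $M$ is invariant, so it suffices to treat the identity. (One should check that the analytic extension persists; since the bracket on the right only involves $\bijection(Q)$ via its diameter and the transformed convex set is again convex, this is harmless — one does not actually need the transformed function to extend to a polydisc of the specific size $D_{4l}$, only enough complex analyticity to run the one-variable argument in the next step, which after rescaling each coordinate is what $D_{4l}$ provides in the original picture.) Then, after normalizing so that $\norm{f}{L^2(Q)}^2 = \abs{Q}$, which makes $M = \sup_{z\in Q+D_{4l}}\abs{F(z)}$, one knows $M\ge 1$ because the average of $\abs{f}^2$ over $Q$ is $1$, so $\abs{f}\ge 1$ somewhere in $Q$, in fact (by continuity of the analytic function and a measure/pigeonhole argument) on a subset of definite size.

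The heart of the argument is the dimension reduction. Pick a point $x_0 \in Q$ with $\abs{f(x_0)}$ reasonably large — more precisely, choose $x_0$ so that the set $\{x\in Q : \abs{f(x)} \ge 1\}$ has measure at least $\abs{Q}/2$ (possible since $\int_Q \abs{f}^2 = \abs{Q}$ forces this up to adjusting constants), and among such points pick one maximizing, say, the proportion of $Q\cap\omega$ visible along lines through $x_0$. For a fixed direction, restrict $F$ to the complex line $x_0 + \zeta e$, $\zeta\in\CC$; by convexity of $Q$ and the bound $Q + D_{4l}$ on the extension, for each unit vector $e$ the function $\zeta\mapsto F(x_0+\zeta e)$ is analytic on a disc of radius at least $4\min_k l_k \cdot(\text{something})$ — here I would more carefully use that $Q$ sits inside a box of sidelengths $l$ so that the real segment of $Q$ through $x_0$ in direction $e$ has length at most $\norm{l}{1}$ (or a diameter bound), and after rescaling the parameter $\zeta$ the relevant one-variable function $\phi$ is analytic on $D(4+\epsilon)$ with $\abs{\phi(0)} = \abs{f(x_0)} \ge 1$, and with $M_\phi \le M$. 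Applying Lemma~\ref{lem:kovrijkine_orig} with $E$ the one-dimensional slice of $\omega$ along that line gives a pointwise lower bound $\sup_{t\in E}\abs{\phi(t)} \ge (12/\abs{E})^{-2\log M/\log 2}\sup_{t\in[0,1]}\abs{\phi(t)} \ge (\abs{E}/12)^{2\log M/\log 2}$. Integrating $\abs{f}^2 \ge \abs{\phi}^2$ over the slice, then integrating over all directions $e$ in the unit sphere (i.e., using polar coordinates centered at $x_0$, which contributes the factor $\tau_d$ and the diameter through the radial extent), and using the averaged lower bound on $\abs{E}$ coming from $\abs{Q\cap\omega}/\diam(Q)^d \lesssim$ average of $\abs{E}$, one assembles $\norm{f}{L^2(Q\cap\omega)}^2 \ge c\,(\abs{Q\cap\omega}/(d\tau_d\diam(Q)^d))^{4\log M/\log 2}\cdot\abs{Q}$, and undoing the normalization replaces the final $\abs{Q}$ by $\abs{Q\cap\omega}/\abs{Q}\cdot\norm{f}{L^2(Q)}^2$ after one more use of the lower bound on $\{\abs f\ge 1\}$ inside $Q\cap\omega$.

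The main obstacle I anticipate is bookkeeping the constants and the exponents so that the factor $12$ and the exponent $4\log M/\log 2$ come out exactly as stated, rather than with extra dimensional junk — in particular, getting from the one-variable exponent $2\log M/\log 2$ in Lemma~\ref{lem:kovrijkine_orig} to the claimed $4\log M/\log 2$, which should come from applying that lemma and then squaring (passing from $\abs\phi$ to $\abs\phi^2$ in the $L^2$ integral), and tracking that $M_\phi\le M$ survives the coordinate rescaling that turns the polydisc $D_{4l}$ into a unit-scale disc $D(4+\epsilon)$. I would also need to be slightly careful that the slicing direction can be chosen so that $\abs{E}>0$ for a set of directions of positive measure; this is exactly where the hypothesis $\abs{Q\cap\omega}>0$ (implicit in the statement via the right-hand side) is used, together with Fubini in polar coordinates. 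The convexity of $Q$ is essential both to guarantee that each real slice is an interval (so Lemma~\ref{lem:kovrijkine_orig} applies after an affine reparametrization to $[0,1]$) and to control the complex radius of analyticity of the sliced function uniformly.
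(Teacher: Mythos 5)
Your derivation of \eqref{eq:local_estimate:2} from \eqref{eq:local_estimate:1} is correct and coincides with the paper's, and you have correctly identified where the exponent $4\tfrac{\log M}{\log 2}$ comes from (squaring the one-variable exponent of Lemma~\ref{lem:kovrijkine_orig}) and where the factor $d\tau_d\diam(\bijection(Q))^d$ comes from (spherical coordinates). But there is a genuine gap at the heart of your argument for \eqref{eq:local_estimate:1}: the passage from the one-dimensional estimate to an $L^2$ bound over $Q\cap\omega$. Lemma~\ref{lem:kovrijkine_orig} only bounds $\sup_{t\in E}\abs{\phi(t)}$ from below, i.e.\ it guarantees that $\abs{f}$ is large at \emph{one point} of each slice of $\omega$, not on a subset of $Q\cap\omega$ of substantial measure; so ``integrating $\abs{f}^2\ge\abs{\phi}^2$ over the slice, then over all directions'' does not yield a lower bound for $\norm{f}{L^2(Q\cap\omega)}^2$ (besides, integrating over all directions runs into the $s^{d-1}$ Jacobian, which is why one instead selects a single good direction). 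The substitute you offer --- choosing $x_0$ so that $\{x\in Q:\abs{f(x)}\ge 1\}$ has measure at least $\abs{Q}/2$, allegedly forced by $\int_Q\abs{f}^2=\abs{Q}$ --- is false: Chebyshev goes the wrong way, and after normalisation one can only guarantee $\abs{\{\abs{f}^2\ge 1/2\}}\ge\abs{Q}/(2M^2)$, which is sharp and would ruin the constants since $M$ is large. The normalisation $\int_Q\abs{f}^2=\abs{Q}$ gives you a single point $y$ with $\abs{f(y)}\ge 1$, nothing more.

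The paper closes exactly this gap by introducing the sublevel set $W\subset Q$ of points where $\abs{f}$ falls below the threshold $\bigl(\abs{\bijection(Q\cap\omega)}/(24d\tau_d\diam(\bijection(Q))^d)\bigr)^{2\log M/\log 2}\,\norm{f}{L^2(Q)}/\sqrt{\abs{Q}}$. One applies the slicing and Lemma~\ref{lem:kovrijkine_orig} with $E$ the slice of $W$ (not of $\omega$) along one well-chosen segment through $y$, obtaining a lower bound on $\sup_W\abs{f}$; played against the defining upper bound on $W$, this forces $\abs{Q\cap\omega}\ge 2\abs{W}$ (here the non-vanishing hypothesis enters via the identity theorem, Remark~\ref{rmk:identity_theorem}). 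Hence at least half of $Q\cap\omega$ lies outside $W$, where $\abs{f}$ meets the threshold, and \eqref{eq:local_estimate:1} follows by integration over $(Q\cap\omega)\setminus W$. Without this step (or Kovrijkine's $L^p$ version of the one-variable lemma plus a different bookkeeping), your outline does not close. A minor further point: the initial reduction to $\bijection=\id$ is unnecessary and somewhat risky, since the complexified image of $Q+D_{4l}$ under $\bijection$ is no longer a polydisc neighbourhood; the paper never transforms $f$, using $\bijection$ only to choose the slicing direction and to measure densities.
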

 
\begin{proof}
 Consider the open set 
 \begin{equation*}
  W
  :=
  \biggl\{ x \in Q \colon \abs{f(x)}
   < \Bigl( \frac{\abs{\bijection(Q \cap \omega)}}{24d\tau_d\diam(\bijection(Q))^d} \Bigr)^{2\frac{\log M}{\log 2}} \cdot
   \frac{\norm{f}{L^2(Q)}}{\sqrt{\abs{Q}}} \biggr\}.
 \end{equation*}
 In order to prove~\eqref{eq:local_estimate:1}, it suffices to show the inequality $\abs{Q \cap \omega} \ge 2\abs{W}$ since then
 $\abs{(Q \cap \omega) \setminus W} \ge \abs{Q \cap \omega} / 2$ and, thus, by definition of $W$,
 \begin{equation*}
  \norm{f}{L^2(Q \cap \omega)}^2
  \ge
  \norm{f}{L^2((Q \cap \omega) \setminus W)}^2
  \ge
  \frac{\abs{Q \cap \omega}}{2}
   \cdot \Bigl( \frac{\abs{\bijection(Q \cap \omega)}}{24d\tau_d\diam(\bijection(Q))^d} \Bigr)^{4\frac{\log M}{\log 2}}
   \cdot \frac{\norm{f}{L^2(Q)}^2}{\abs{Q}},
 \end{equation*}
 which agrees with the claim.

 In order to show $\abs{Q \cap \omega} \ge 2\abs{W}$, we may suppose that $W \neq \emptyset$. Choose a point $y \in Q$ with
 $\abs{f(y)} \ge \norm{f}{L^2(Q)} / \sqrt{\abs{Q}}$. We then claim that there is a line segment $I = I(y,W,Q) \subset Q$ starting
 at $y$ such that
 \begin{equation}\label{eq:dimension_reduc}
  \frac{\abs{I \cap W}}{\abs{I}}
  \ge
  \frac{\abs{\bijection(W)}}{d\tau_d\diam(\bijection(Q))^d}
  .
 \end{equation}
 Indeed, using spherical coordinates around $\bijection(y)$, we write
 \begin{equation*}
  \abs{\bijection(W)}
  =
  \int_{\abs{\xi} = 1} \int_0^\infty \chi_{\bijection(W)}(\bijection(y) + s\xi)s^{d-1} \dd s\dd\sigma(\xi),
 \end{equation*}
 so that
 \begin{equation*}
  \abs{\bijection(W)}
  \le
  d\tau_d \int_0^\infty \chi_{\bijection(W)}(\bijection(y) + s\zeta)s^{d-1} \dd s
 \end{equation*}
 for some $\zeta\in\RR^d$ with $\abs{\zeta}=1$; recall that $d\tau_d$ agrees with the $(d-1)$-dimensional volume of the unit sphere
 in $\RR^d$. Since $Q$ is convex by hypothesis, the intersection $I := \{ y + s\bijection^{-1}(\zeta) \colon s \ge 0 \} \cap Q$ is
 then a line segment starting at $y$ satisfying
 \begin{equation*}
  \abs{\bijection(W)}
  \le
  d\tau_d \abs{\bijection(I)}^{d-1} \int_0^\infty \chi_{\bijection(I \cap W)}(\bijection(y) + s\zeta) \dd s
  =
  d\tau_d \abs{\bijection(I)}^{d-1} \abs{\bijection(I \cap W)}
  .
 \end{equation*}
 Taking into account that $\abs{\bijection(I)}^d \le \diam(\bijection(Q))^d$ and in view of the identity
 $\abs{I\cap W}/\abs{I} = \abs{\bijection(I\cap W)}/\abs{\bijection(I)}$, this proves~\eqref{eq:dimension_reduc}.

 Since the set $Q$ is open and $y \in Q$ and setting
 $\tilde\zeta = \bijection^{-1}(\zeta) / \abs{\bijection^{-1}(\zeta)} \in \RR^d$, we have $\abs{I}\tilde{\zeta} z \in D_{4l}$ for
 all $z \in D(4+\epsilon)$ for some sufficiently small $\epsilon > 0$.
 The mapping
 \begin{equation*}
  z \mapsto \phi(z) := \frac{\sqrt{\abs{Q}}}{\norm{f}{L^2(Q)}} \cdot F(y + \abs{I}\tilde\zeta z)
 \end{equation*}
 therefore defines an analytic function $\phi \colon D(4 + \epsilon) \to \CC$. Moreover, we have
 $\sup_{t\in[0,1]}\abs{\phi(t)} \ge \abs{\phi(0)} = \sqrt{\abs{Q}}\abs{f(y)}/\norm{f}{L^2(\Omega)} \ge 1$ by the choice of $y$.
 Hence, with
 \begin{equation*}
  M_\phi
  :=
  \sup_{z \in D(4)} \abs{\phi(z)}
  \le
  \frac{\sqrt{\abs{Q}}}{\norm{f}{L^2(Q)}} \cdot \sup_{z \in y + D_{4l}} \abs{F(z)}
  \le
  M,
 \end{equation*}
 applying Lemma~\ref{lem:kovrijkine_orig} to $\phi$ and
 $E := \{ t \in [0,1] \colon y + \abs{I}\tilde \zeta t \in I \cap W \} \subset [0,1]$ gives
 \begin{equation*}
  \sup_{t \in E} \abs{\phi(t)}
  \ge
  \Bigl( \frac{\abs{E}}{12} \Bigr)^{2\frac{\log M_\phi}{\log 2}} \sup_{t\in[0,1]} \abs{\phi(t)}
  \ge
  \Bigl( \frac{\abs{E}}{12} \Bigr)^{2\frac{\log M}{\log 2}}.
 \end{equation*}
 Inserting the definition of $\phi$ and recalling that $F|_Q = f$, this can be rewritten as
 \begin{equation*}
  \sup_{t \in E} \abs{f(y + \abs{I}\tilde\zeta t)}
  \ge
  \Bigl( \frac{\abs{E}}{12} \Bigr)^{2\frac{\log M}{\log 2}} \cdot \frac{\norm{f}{L^2(Q)}}{\sqrt{\abs{Q}}}.
 \end{equation*}
 In light of $\sup_{x \in W} \abs{f(x)} \ge \sup_{x \in I \cap W} \abs{f(x)} = \sup_{t \in E} \abs{f(y + \abs{I}\tilde\zeta t)}$
 and the identity $\abs{E} = \abs{I \cap W} / \abs{I}$, we conclude from~\eqref{eq:dimension_reduc} and the latter that
 \begin{equation*}
  \sup_{x \in W} \abs{f(x)}
  \ge
  \Bigl( \frac{\abs{\bijection(W)}}{12d\tau_d\diam(\bijection(Q))^d} \Bigr)^{2\frac{\log M}{\log 2}}
   \cdot \frac{\norm{f}{L^2(Q)}}{\sqrt{\abs{Q}}}
  .
 \end{equation*}
 Combining the above with the definition of $W$ and noting that
 $\frac{\abs{\Psi(Q\cap \omega)}}{\abs{\Psi(W)}} = \frac{\abs{Q\cap \omega}}{\abs{W}}$, we obtain 
 \begin{equation*}
  \sup_{x \in W} \abs{f(x)}
  \le
  \Bigl( \frac{\abs{\bijection(Q \cap \omega)}}{24d\tau_d\diam(\bijection(Q))^d} \Bigr)^{2\frac{\log M}{\log 2}}
   \cdot \frac{\norm{f}{L^2(\Omega)}}{\sqrt{\abs{Q}}}
  \le
  \Bigl( \frac{\abs{Q \cap \omega}}{2\abs{W}} \Bigr)^{2\frac{\log M}{\log 2}} \sup_{x \in W} \abs{f(x)}
  ,
 \end{equation*}
 which requires $\abs{Q \cap \omega} \ge 2\abs{W}$ since $f$, as a non-zero analytic function on the domain $Q$, cannot vanish on
 the whole open set $W$, see Remark~\ref{rmk:identity_theorem}. This completes the proof of~\eqref{eq:local_estimate:1}.
 
 Finally, we have $\abs{\bijection(Q)} \le d\tau_d\diam(\bijection(Q))^d$, which can be seen, for instance, with the same argument
 as used to derive~\eqref{eq:dimension_reduc}, with $\bijection(Q)$ instead of $\bijection(W)$. This
 shows~\eqref{eq:local_estimate:2} and, thus, concludes the proof of the lemma.
\end{proof}%

\begin{rmk}\label{rmk:bijection}
 The bijection $\bijection$ in the above lemma may be used to compensate for `unfavourable' geometry, that is, for cases where the
 ratio $\abs{Q} / \diam(Q)^d$ is small. More precisely, we may choose $\bijection$ such that the second factor on the right-hand
 side of
 \begin{equation*}
   \frac{\abs{\bijection(Q \cap \omega)}}{\diam(\bijection(Q))^d}
   =
   \frac{\abs{Q\cap\omega}}{\abs{Q}} \cdot \frac{\abs{\bijection(Q)}}{\diam(\bijection(Q))^d}
 \end{equation*}
 is maximal. For instance, for the rectangle $Q = (0,l)\times (0,1)\subset\RR^2$, $l>0$, we have
 $\abs{Q} / \diam(Q)^2 = l / (1+l^2)$, and the latter can be arbitrarily small with large $l$. On the other hand, choosing the
 bijection $\bijection \colon \RR^2 \to \RR^2$ as $\bijection(x_1,x_2) = (x_1/l,x_2)$, we have $\bijection(Q) = (0,1)^2$ and,
 thus, $\abs{\bijection(Q)} / \diam(\bijection(Q))^2 = 1/2$.
\end{rmk}

\subsection{Good and bad elements of the covering}\label{subsec:good_and_bad}

In the context of Proposition~\ref{prop:LS-abstract-version}, the local estimate from Lemma~\ref{lem:local_estimate} raises two
immediate questions: Firstly, when can $f$ (or rather its restriction $f|_{Q_j}$) be extended to an analytic function on a
sufficiently large complex neighbourhood of $Q_j$? Secondly, can the corresponding quantity $M$ be bounded from above uniformly
over the $Q_j$?

The analyticity of $f$ allows to deal with these two questions in terms of convergence properties of suitable Taylor expansions.
We are in fact able to handle these for a sufficiently large class of covering elements $Q_j$. Recall that $\{Q_j\}_{j\in J}$ is a
$(\kappa,l)$-covering of $\Omega$ and that $f \colon \Omega \to \CC$ satisfies a Bernstein-type inequality with respect to
$C_B \colon \NN_0 \to (0,\infty)$ such that~\eqref{eq:BernsteinSum} holds. Now, we say that $Q_j$ is a \emph{good} element of the
covering if
\begin{equation}\label{eq:good}
 \sum_{\abs{\alpha} = m} \frac{1}{\alpha!}\norm{\partial^\alpha f}{L^2(Q_j)}^2
 \le
 2^{m+1}\kappa \frac{C_B(m)}{m!} \norm{f}{L^2(Q_j)}^2
 \quad\text{ for all }\quad
 m \in \NN,
\end{equation}
and we call it \emph{bad} otherwise; note that~\eqref{eq:good} is automatically satisfied for $m = 0$ for~\emph{all} $Q_j$ since
necessarily $C_B(0) \ge 1$. We claim that
\begin{equation}\label{eq:bad-mass}
 \sum_{j \colon Q_j\text{ good}} \norm{f}{L^2(Q_j)}^2
 \geq
 \frac{1}{2}\norm{f}{L^2(\Omega)}^2.
\end{equation}
Indeed, using the definition of bad elements, property~\eqref{it:overlap} of the $(\kappa,l)$-covering, and the Bernstein-type
inequality for $f$, we obtain
\begin{align*}
 \sum_{j \colon Q_j\text{ bad}} \norm{f}{L^2(Q_j)}^2
 &\leq
 \sum_{j \colon Q_j\text{ bad}}\sum_{m\in\NN}\sum_{\abs{\alpha}=m} \frac{m! 2^{-m-1}}{\alpha! \kappa C_B(m)}
  \norm{\partial^\alpha f}{L^2(Q_j)}^2\\
 &\leq
 \sum_{j\in J} \sum_{m\in\NN} \sum_{\abs{\alpha}=m} \frac{m! 2^{-m-1}}{\alpha! \kappa C_B(m)}
  \norm{\partial^\alpha f}{L^2(Q_j)}^2\\
 &\leq
 \sum_{m\in\NN}\sum_{\abs{\alpha}=m} \frac{m! 2^{-m-1}}{\alpha! C_B(m)} \norm{\partial^\alpha f}{L^2(\Omega)}^2\\
 &\leq
 \norm{f}{L^2(\Omega)}^2\sum_{m\in\NN} 2^{-m-1}\\
 &=
 \frac{1}{2}\norm{f}{L^2(\Omega)}^2,
\end{align*}
proving~\eqref{eq:bad-mass}. Thus, the class of good elements is large enough that the contribution of the bad ones can be
subsumed in the contribution of the good ones, see the proof of Proposition~\ref{prop:LS-abstract-version} below; in particular,
good elements exist. 
 
Addressing the convergence properties of suitable Taylor expansions of $f$ on good elements of the covering requires a certain
control on corresponding derivatives of $f$. In fact, in every good element $Q_j$ there exists a point $x_0$ such that 
\begin{equation*}
 \sum_{\abs{\alpha}=m} \frac{1}{\alpha!} \abs{\partial^\alpha f(x_0)}^2
 \le
 4^{m+1} \kappa \frac{C_B(m)}{m!} \cdot \frac{\norm{f}{L^2(Q_j)}^2}{\abs{Q_j}}
 \quad\text{ for all }\quad
 m \in \NN_0.
\end{equation*}
In order to see this, we assume by contradiction that for all $x\in Q_j$ there exists $m=m(x)$ such that
\[ 
 \sum_{\abs{\alpha}=m}\frac{1}{\alpha!}\abs{\partial^\alpha f(x)}^2
 >
 \frac{4^{m+1}\kappa C_B(m)}{m! \abs{Q_j}}\norm{f}{L^2(Q_j)}^2.
\]
We estimate further by the sum over all $m\in\NN_0$ in order to get rid of the $x$-dependence and obtain
\[
 \sum_{m\in\NN_0} \sum_{\abs{\alpha}=m}\frac{m! 4^{-m-1}}{\alpha! \kappa C_B(m)} \abs{\partial^\alpha f(x)}^2
 >
 \frac{1}{\abs{Q_j}}\norm{f}{L^2(Q_j)}^2
\]
for all $x \in Q_j$. Integration over $Q_j$ and the definition of good elements yield 
\begin{align*}
 \norm{f}{L^2(Q_j)}^2
 &<
 \sum_{m\in\NN_0} \frac{m! 4^{-m-1}}{\kappa C_B(m)}\sum_{\abs{\alpha}=m} \frac{1}{\alpha!} \norm{\partial^\alpha f}{L^2(Q_j)}^2\\
 &\leq
 \norm{f}{L^2(Q_j)}^2 \sum_{m\in\NN_0} \frac{1}{2^{m+1}} = \norm{f}{L^2(Q_j)}^2
 ,
\end{align*}
leading to a contradiction. For some $x_0 \in Q_j$ we thus have, in particular,
\begin{equation}\label{eq:existence-point}
 \abs{\partial^\alpha f(x_0)}^2
 \le
 \Bigl( \frac{m!}{\alpha!} \Bigr)\abs{\partial^\alpha f(x_0)}^2
 \le
 4^{m+1} \kappa C_B(m) \frac{\norm{f}{L^2(Q_j)}^2}{\abs{Q_j}}
\end{equation}
for all $m \in \NN_0$ and all $\alpha \in \NN_0^d$ with $\abs{\alpha} = m$.

We are now in position to conclude the proof of Proposition~\ref{prop:LS-abstract-version}.

\begin{proof}[Proof of Proposition~\ref{prop:LS-abstract-version}]
 In light of~\eqref{eq:BernsteinSum} and Lemma~\ref{lem:Bernstein}, $f$ is analytic in $\Omega$. Moreover, we may assume that $f$
 does not vanish on $\Omega$ and, therefore, also on none of the $Q_j$ by Remark~\ref{rmk:identity_theorem}.
 Let $Q_j$ be a good element of the covering as defined in~\eqref{eq:good}, and let $x_0\in Q_j$ be a point as
 in~\eqref{eq:existence-point}. For every $z \in x_0 + D_{5l}$ we then have
 \begin{align*}
  \sum_{\alpha \in \NN_0^d}\frac{\abs{\partial^\alpha f(x_0)}}{\alpha!}\abs{(z-x_0)^\alpha}
  &\leq
  \sum_{m\in \NN_0}\sum_{\abs{\alpha}=m}\frac{1}{\alpha!} (\kappa C_B(m))^{1/2} 2^{m+1} (5l)^\alpha
   \frac{\norm{f}{L^2(Q_j)}}{\sqrt{\abs{Q_j}}}\\
  &=
  2 \kappa^{1/2}\frac{\norm{f}{L^2(Q_j)}}{\sqrt{\abs{Q_j}}} \sum_{m\in\NN_0} C_B(m)^{1/2} \frac{(10 \norm{l}{1})^m}{m!}\\
  &=
  2 \kappa^{1/2}\frac{\norm{f}{L^2(Q_j)}}{\sqrt{\abs{Q_j}}} h < \infty,
 \end{align*}
 with $h$ as in \eqref{eq:BernsteinSum}; note that at the second step we used
 $\sum_{\abs{\alpha} = m} l^\alpha/\alpha! = \norm{l}{1}^m / m!$. Hence, the Taylor
 expansion of $f$ around $x_0$ converges in the complex polydisk $x_0 + D_{5l}$. Now, since $x_0 \in Q_j$ and $Q_j$ is open and
 contained in a hypercube with sides of length $l$ parallel to coordinate axes, for some sufficiently small $\epsilon > 0$ we
 have
 \begin{equation*}
  Q_j + D_{4l+\epsilon} \subset x_0 + D_{5l}.
 \end{equation*}
 Taking into account Remark~\ref{rmk:identity_theorem}, the Taylor series of $f$ around the point $x_0$ defines 
 therefore an analytic extension $F \colon Q_j + D_{4l+\epsilon} \to \CC$ of $f|_{Q_j}$ with
 \begin{equation*}
  M_j := \frac{\sqrt{\abs{Q_j}}}{\norm{f}{L^2(Q_j)}} \cdot \sup_{z \in Q_j + D_{4l}} \abs{F(z)}
  \le
  2 \kappa^{1/2}h
  =:
  M.
 \end{equation*}
 Together with the definition of $\nu$, it now follows from Lemma~\ref{lem:local_estimate} that
 \begin{equation}\label{eq:local-estimate}
  \begin{aligned}
   \norm{f}{L^2(Q_j \cap \omega)}^2
   &\ge
   12 \Bigl( \frac{\abs{\bijection_j(Q_j \cap \omega)}}{24d\tau_d\diam(\bijection_j(Q_j))^d} \Bigr)^{4\frac{\log M_j}{\log 2}+1}
    \cdot \norm{f}{L^2(Q_j)}^2\\
   &\ge
   12 \Bigl( \frac{\nu}{24d\tau_d} \Bigr)^{4\frac{\log M}{\log 2}+1} \cdot \norm{f}{L^2(Q_j)}^2.
  \end{aligned}
 \end{equation}
 Using~\eqref{eq:bad-mass} and property~\eqref{it:overlap} of the covering, summing~\eqref{eq:local-estimate} over all good
 elements yields
 \begin{align*}
  \norm{f}{L^2(\omega)}^2
  &\geq
  \frac{1}{\kappa}\sum_{j\in J}\norm{f}{L^2(Q_j\cap \omega)}^2
   \geq \frac{1}{\kappa}\sum_{j \colon Q_j\text{ good}}\norm{f}{L^2(Q_j\cap \omega)}^2\\
  &\geq
  \frac{12}{\kappa}\left(\frac{\nu}{24d \tau_d}\right)^{4\frac{\log M}{\log 2}+1}
   \sum_{j \colon Q_j\text{ good}}\norm{f}{L^2(Q_j)}^2\\
  &\geq
  \frac{6}{\kappa}\left(\frac{\nu}{24d \tau_d}\right)^{4\frac{\log M}{\log 2}+1}\norm{f}{L^2(\Omega)}^2.
 \end{align*}
 In order to complete the proof, it only remains to observe that
 \begin{equation*}
  4\frac{\log M}{\log 2}+1
  =
  2\frac{\log \kappa}{\log 2} + 4\frac{\log h}{\log 2} + 5.\qedhere
 \end{equation*}
\end{proof}


\appendix

\section{Auxiliary material}\label{sec:aux}

In this section we collect some technical results useful for the proof of Theorem~\ref{thm:LS} and its applications. 

\begin{lemma}\label{lem:partial_der}
	Let $\nu_1,\dots,\nu_d$ be an orthonormal basis in $\RR^d$ and $U \subset \RR^d$ be open. Then,
	\begin{equation*}
	\sum_{\abs{\alpha} = m} \frac{1}{\alpha!} \partial^\alpha f \cdot \partial^\alpha g
	=
	\sum_{\abs{\alpha} = m} \frac{1}{\alpha!} \tilde{\partial}^\alpha f \cdot \tilde{\partial}^\alpha g
	\quad\text{ on }\quad
	U
	\end{equation*}
	for all $m \in \NN_0$ and all $f,g \in C^\infty(U)$, where
	$\tilde{\partial}^\alpha := \partial_{\nu_1}^{\alpha_1} \dots \partial_{\nu_d}^{\alpha_d}$.
	
	\begin{proof}
		We proceed by induction over $m$. The case $m = 0$ is clear, and the case $m=1$ follows from
		\begin{align*}
			\sum_{j=1}^d \partial_{\nu_j}f(x) \cdot \partial_{\nu_j}g(x)
			&=
			\sum_{j=1}^d \langle \nu_j,\nabla f \rangle_{\RR^d} \cdot \langle \nu_j,\nabla g \rangle_{\RR^d}
			=
			\langle \sum_{j = 1}^d \langle \nu_j, \nabla g \rangle_{\RR^d} \nu_j,\nabla f \rangle_{\RR^d}\\
			&=
			\langle \nabla f,\nabla g \rangle_{\RR^d}
			=
			\sum_{j=1}^d \partial_j f \cdot \partial_j g.
		\end{align*}	
		Suppose now that the claim is true for some $m \in \NN_0$. Then,
		\begin{equation*}
			\sum_{j=1}^d \partial_{\nu_j} \tilde{\partial}^\beta f \cdot \partial_{\nu_j} \tilde{\partial}^\beta g
			=
			\sum_{j=1}^d \partial_j \tilde{\partial}^\beta f \cdot \partial_j \tilde{\partial}^\beta g
		\end{equation*}
		by the case $m=1$ applied to $\tilde{\partial}^\beta f$ and $\tilde{\partial}^\beta g$. Hence,
		\begin{align*}
			\sum_{\abs{\alpha} = m+1} \frac{1}{\alpha!} \tilde{\partial}^\alpha f \cdot \tilde{\partial}^\alpha g
			&=
			\frac{1}{m+1} \sum_{\abs{\beta} = m} \frac{1}{\beta!} \sum_{j = 1}^d \partial_{\nu_j}\tilde{\partial}^\beta f \cdot
			\partial_{\nu_j}\tilde{\partial}^\beta g\\
			&=
			\frac{1}{m+1} \sum_{j = 1}^d \sum_{\abs{\beta} = m} \frac{1}{\beta!} \tilde{\partial}^\beta \partial_j f \cdot
			\tilde{\partial}^\beta \partial_j g\\
			&=
			\frac{1}{m+1} \sum_{j = 1}^d \sum_{\abs{\beta} = m} \frac{1}{\beta!} \partial^\beta \partial_j f \cdot
			\partial^\beta \partial_j g\\
			&=
			\sum_{\abs{\alpha} = m+1} \frac{1}{\alpha!} \partial^\alpha f \cdot \partial g.\qedhere
		\end{align*}
	\end{proof}%
\end{lemma}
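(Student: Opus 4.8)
The plan is to recognise the expression $\sum_{\abs{\alpha}=m}\frac{1}{\alpha!}\partial^\alpha f\cdot\partial^\alpha g$ as a normalisation of the full contraction of the two symmetric $m$-tensors of $m$-th order derivatives of $f$ and $g$, and then to exploit that such a contraction is independent of the chosen orthonormal frame. This reduces the whole statement to one elementary combinatorial identity and one application of the completeness relation of an orthonormal basis.

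First I would record the identity
\[
 \sum_{\abs{\alpha}=m}\frac{1}{\alpha!}\,\partial^\alpha u\cdot\partial^\alpha v
 =
 \frac{1}{m!}\sum_{i_1,\dots,i_m=1}^d (\partial_{i_1}\cdots\partial_{i_m}u)(\partial_{i_1}\cdots\partial_{i_m}v),
 \qquad u,v\in C^\infty(U),
\]
which follows by grouping the ordered index tuples $(i_1,\dots,i_m)\in\{1,\dots,d\}^m$ according to the multi-index $\alpha\in\NN_0^d$ they induce (there are $\binom{m}{\alpha_1,\dots,\alpha_d}=m!/\alpha!$ of them for each $\alpha$ with $\abs{\alpha}=m$) and using that mixed partials of smooth functions commute, so $\partial_{i_1}\cdots\partial_{i_m}=\partial^\alpha$. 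Since the directional derivatives $\partial_{\nu_k}=\sum_{i}(\nu_k)_i\partial_i$ are linear combinations of the $\partial_i$ and hence commute with one another, the same reasoning gives $\sum_{\abs{\alpha}=m}\frac{1}{\alpha!}\tilde\partial^\alpha f\cdot\tilde\partial^\alpha g=\frac{1}{m!}\sum_{j_1,\dots,j_m=1}^d(\partial_{\nu_{j_1}}\cdots\partial_{\nu_{j_m}}f)(\partial_{\nu_{j_1}}\cdots\partial_{\nu_{j_m}}g)$. Thus it remains only to prove the frame-independence $\sum_{j_1,\dots,j_m}(\partial_{\nu_{j_1}}\cdots\partial_{\nu_{j_m}}f)(\partial_{\nu_{j_1}}\cdots\partial_{\nu_{j_m}}g)=\sum_{i_1,\dots,i_m}(\partial_{i_1}\cdots\partial_{i_m}f)(\partial_{i_1}\cdots\partial_{i_m}g)$.

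For this I would substitute $\partial_{\nu_j}=\sum_i(\nu_j)_i\partial_i$ in each factor, multiply out, and interchange the finitely many sums, turning the $\nu$-sided expression into
\[
 \sum_{i_1,\dots,i_m}\sum_{k_1,\dots,k_m}\Bigl(\prod_{l=1}^m\sum_{j=1}^d(\nu_j)_{i_l}(\nu_j)_{k_l}\Bigr)
 (\partial_{i_1}\cdots\partial_{i_m}f)(\partial_{k_1}\cdots\partial_{k_m}g).
\]
Each inner sum collapses by the completeness relation $\sum_{j=1}^d(\nu_j)_a(\nu_j)_b=\delta_{ab}$ valid for an orthonormal basis (equivalently, the matrix with columns $\nu_1,\dots,\nu_d$ is orthogonal), forcing $k_l=i_l$ and leaving exactly $\sum_{i_1,\dots,i_m}(\partial_{i_1}\cdots\partial_{i_m}f)(\partial_{i_1}\cdots\partial_{i_m}g)$. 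Reading the chain of equalities back through the combinatorial identity of the previous paragraph yields the assertion.

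The only genuinely fiddly point is the bookkeeping in the first step: keeping straight that $\partial_{i_1}\cdots\partial_{i_m}$ depends only on the multiset of indices (so commutativity of partials must be invoked), and that the multinomial coefficient $m!/\alpha!$ reconciles the ordered sum with the sum over multi-indices. Everything else is a short direct computation requiring nothing beyond smoothness of $f$ and $g$ and no earlier result of the paper. As an alternative, one could argue by induction on $m$: the case $m=1$ is the basis-independence of $\langle\nabla f,\nabla g\rangle=\sum_j\langle\nu_j,\nabla f\rangle\langle\nu_j,\nabla g\rangle$, and the passage from $m$ to $m+1$ is handled by the standard recursion $\sum_{\abs{\alpha}=m+1}\frac{1}{\alpha!}\partial^\alpha u\,\partial^\alpha v=\frac{1}{m+1}\sum_{j=1}^d\sum_{\abs{\beta}=m}\frac{1}{\beta!}\partial^\beta\partial_j u\,\partial^\beta\partial_j v$, applied in both frames, together with the induction hypothesis for $\partial_j f$ and $\partial_j g$.
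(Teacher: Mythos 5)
Your proposal is correct, and it takes a genuinely different route from the paper. The paper argues by induction on $m$: the base case $m=1$ is the frame-independence of $\langle\nabla f,\nabla g\rangle$ written via the expansion $\nabla f=\sum_j\langle\nu_j,\nabla f\rangle\nu_j$, and the inductive step applies this base case to $\tilde{\partial}^\beta f$ and $\tilde{\partial}^\beta g$ together with the recursion $\sum_{\abs{\alpha}=m+1}\frac{1}{\alpha!}(\cdots)=\frac{1}{m+1}\sum_{j}\sum_{\abs{\beta}=m}\frac{1}{\beta!}(\cdots)$ — exactly the alternative you sketch in your last sentence. Your main argument instead identifies $\sum_{\abs{\alpha}=m}\frac{1}{\alpha!}\partial^\alpha f\cdot\partial^\alpha g$ as $\frac{1}{m!}$ times the full contraction of the symmetric $m$-th derivative tensors (via the multinomial count $m!/\alpha!$ of ordered tuples inducing a given $\alpha$, which requires commutativity of the constant-coefficient operators $\partial_{\nu_k}$ — a point you correctly flag), and then proves frame-independence of that contraction in one stroke from the completeness relation $\sum_j(\nu_j)_a(\nu_j)_b=\delta_{ab}$. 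What your approach buys is conceptual clarity — it explains \emph{why} the quantity is basis-independent (it is a tensor contraction) and makes explicit the combinatorial identity that the paper's recursion step uses without comment; what the paper's induction buys is brevity and uniformity with the inductive arguments used elsewhere (Lemma~\ref{lem:Bernstein-Criterion} and the proof of Proposition~\ref{prop:BJPS}). Both proofs are complete and correct.
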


For the next result, we identify, as in Section~\ref{subsec:transformations}, a matrix in $\RR^d$ with the corresponding induced
linear mapping on $\RR^d$ with respect to the standard basis.

\begin{lemma}\label{lemma:derivative-transformation}
	Let $f,g \in W^{\infty, 2}(\Omega)$ with some open $\Omega \subset \RR^d$.
	\begin{enumerate}[(a)]
		
		\item
		If $U \in \RR^{d\times d}$ is orthogonal, then $f \circ U, g \circ U \in W^{\infty, 2}(U^T(\Omega))$ with
		\begin{equation*}
		\sum_{\abs{\alpha}=m} \frac{1}{\alpha!} \langle \partial^\alpha (f\circ U) , \partial^\alpha (g \circ U) \rangle_
		{L^2(U^T(\Omega))}
		=
		\sum_{\abs{\alpha}=m} \frac{1}{\alpha!} \langle \partial^\alpha f , \partial^\alpha g \rangle_{L^2(\Omega)}
		\quad \forall\, m\in\NN_0
		.
		\end{equation*}
		
		\item
		If $P \in \RR^{d\times d}$ is symmetric and positive definite, then $f\circ P \in W^{\infty, 2}(P^{-1}(\Omega))$ with
		\begin{equation*}
		\sum_{\abs{\alpha}=m} \frac{1}{\alpha!}\norm{\partial^\alpha (f\circ P)}{L^2(P^{-1}(\Omega))}^2
		\geq 
		\frac{p_{\min}^{2m}}{\det P}\sum_{\abs{\alpha}=m} \frac{1}{\alpha!}\norm{\partial^\alpha f}{L^2(\Omega)}^2
		\quad \forall\, m\in\NN_0,
		\end{equation*}
		where $p_{\min}$ is the smallest eigenvalue of $P$.
		
	\end{enumerate}
\end{lemma}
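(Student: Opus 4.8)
The plan is to prove~(a) by a direct chain-rule computation combined with the rotational invariance from Lemma~\ref{lem:partial_der}, and then to derive~(b) from~(a) by diagonalising $P$, the diagonal case reducing to a one-line rescaling. Throughout I would use that $W^{\infty,2}(\Omega) \subset C^\infty(\Omega)$ by interior Sobolev embedding, so that every function in sight has a smooth representative and all pointwise manipulations are legitimate, and that the classical chain rule computes the weak derivatives of a composition with a linear bijection.

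For~(a), let $u_1,\dots,u_d \in \RR^d$ be the columns of $U$, which form an orthonormal basis. The chain rule gives $\partial_j(f\circ U)(x) = (\partial_{u_j}f)(Ux)$ and hence, iterating, $\partial^\alpha(f\circ U)(x) = (\tilde\partial^\alpha f)(Ux)$ for every multi-index $\alpha$, where $\tilde\partial^\alpha := \partial_{u_1}^{\alpha_1}\cdots\partial_{u_d}^{\alpha_d}$. Since each $\partial_{u_j}$ is a linear combination of $\partial_1,\dots,\partial_d$, every $\partial^\alpha(f\circ U)$ is a finite linear combination of the functions $(\partial^\beta f)\circ U$ with $\abs{\beta}=\abs{\alpha}$, which belong to $L^2(U^T(\Omega))$ because $\abs{\det U}=1$; this already shows $f\circ U,g\circ U \in W^{\infty,2}(U^T(\Omega))$. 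Applying Lemma~\ref{lem:partial_der} at the point $Ux$ with the orthonormal basis $u_1,\dots,u_d$ (and with $\overline{g}$ in place of $g$ to match the Hermitian form) yields
\[
 \sum_{\abs{\alpha}=m} \frac{1}{\alpha!}\, \partial^\alpha(f\circ U)(x)\,\overline{\partial^\alpha(g\circ U)(x)}
 =
 \sum_{\abs{\alpha}=m} \frac{1}{\alpha!}\, (\partial^\alpha f)(Ux)\,\overline{(\partial^\alpha g)(Ux)}
\]
for all $x\in U^T(\Omega)$. Integrating over $U^T(\Omega)$ and substituting $y=Ux$, whose Jacobian is $1$, gives the identity in~(a).

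For~(b), I would write $P = U^T D U$ with $U \in \RR^{d\times d}$ orthogonal and $D = \diag(p_1,\dots,p_d)$, where $p_j\ge p_{\min}$ for all $j$ and $\det D = \det P$. The diagonal case is immediate: for $h\in W^{\infty,2}(\Lambda)$ one has $\partial^\alpha(h\circ D)(x) = p^\alpha(\partial^\alpha h)(Dx)$ with $p^\alpha := \prod_j p_j^{\alpha_j}$, so by the change of variables $y=Dx$,
\[
 \norm{\partial^\alpha(h\circ D)}{L^2(D^{-1}(\Lambda))}^2
 =
 \frac{p^{2\alpha}}{\det D}\,\norm{\partial^\alpha h}{L^2(\Lambda)}^2
 ,
\]
and since $p^{2\alpha}\ge p_{\min}^{2\abs{\alpha}}$ this gives $h\circ D\in W^{\infty,2}(D^{-1}(\Lambda))$ together with the claimed inequality for $h$ and $D$. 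I would then chain three steps: apply~(a) with $U^T$ to $f$, obtaining $g := f\circ U^T\in W^{\infty,2}(U(\Omega))$ with $\sum_{\abs{\alpha}=m}\frac{1}{\alpha!}\norm{\partial^\alpha g}{L^2(U(\Omega))}^2 = \sum_{\abs{\alpha}=m}\frac{1}{\alpha!}\norm{\partial^\alpha f}{L^2(\Omega)}^2$; apply the diagonal estimate to $h=g$ on $\Lambda = U(\Omega)$; and apply~(a) with $U$ to $g\circ D$ on $\Lambda = D^{-1}(U(\Omega))$, using $U^T D^{-1}U(\Omega) = P^{-1}(\Omega)$ and $f\circ P = (g\circ D)\circ U$. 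Combining these three relations and recalling $\det D = \det P$ produces $f\circ P\in W^{\infty,2}(P^{-1}(\Omega))$ and the asserted bound.

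The only genuinely delicate point will be the bookkeeping around the chain rule for weak derivatives under linear changes of variables: that $\partial^\alpha(f\circ A)$, for $A$ orthogonal or symmetric positive definite, is as a weak derivative the classical $\alpha$-derivative of the smooth representative of $f\circ A$ and lies in $L^2$ of the transformed domain. This is a standard fact, most transparently obtained from the interior smoothness of $W^{\infty,2}$-functions; with it in hand, both parts reduce to pure computation with multi-indices and Jacobians.
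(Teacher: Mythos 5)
Your proposal is correct and follows essentially the same route as the paper: part~(a) via the chain rule for the linear change of variables combined with the basis-invariance from Lemma~\ref{lem:partial_der} (you apply that lemma to $f,g$ on $\Omega$ rather than to $f\circ U,g\circ U$ on $U^T(\Omega)$, a symmetric variation of no consequence), and part~(b) by diagonalising $P$ and sandwiching the explicit diagonal rescaling between two applications of~(a). Your treatment is in fact slightly more detailed than the paper's, which leaves the diagonal estimate and the $W^{\infty,2}$-membership implicit.
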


\begin{proof}
	(a).~%
	Let $e_1, \ldots, e_d$ denote the standard basis in $\RR^d$, and set $\nu_j:= U^{-1} e_j$ for $j=1, \ldots d$. Then,
	$\nabla(f\circ U) (x) = U^T(\nabla f)(Ux)$ and, consequently,
	\[
	\partial_{\nu_j}(f\circ U)(x)
	=
	\langle U^T(\nabla f) (Ux), \nu_j \rangle_{\CC^d}
	=
	\langle (\nabla f) (Ux), U\nu_j \rangle_{\CC^d}
	=
	(\partial_j f)(Ux)
	.
	\]
	Iterating this identity gives
	$\partial_{\nu_1}^{\alpha_1} \ldots \partial_{\nu_d}^{\alpha_d}(f\circ U)(x) = (\partial^\alpha f) (Ux)$ for all
	$\alpha \in \NN_0^d$; the same holds, of course, also for $g$ instead of $f$. Abbreviating
	$\tilde{\partial}^\alpha := \partial_{\nu_1}^{\alpha_1} \ldots \partial_{\nu_d}^{\alpha_d}$, for $m\in\NN_0$ we now obtain with
	the use of Lemma~\ref{lem:partial_der} that
	\begin{multline*}
	\sum_{\abs{\alpha}=m} \frac{1}{\alpha!} \langle \partial^\alpha (f\circ U) , \partial^\alpha (g \circ U) \rangle_
	{L^2(U^T(\Omega))}
	=
	\sum_{\abs{\alpha}=m} \frac{1}{\alpha!} \langle \tilde{\partial}^\alpha (f\circ U) , \tilde{\partial}^\alpha (g\circ U)
	\rangle_{L^2(U^T(\Omega))}\\
	=
	\sum_{\abs{\alpha}=m} \frac{1}{\alpha!} \langle (\partial^\alpha f)\circ U , (\partial^\alpha g)\circ U \rangle_
	{L^2(U^T(\Omega))}
	=
	\sum_{\abs{\alpha}=m} \frac{1}{\alpha!} \langle \partial^\alpha f , \partial^\alpha g \rangle_{L^2(\Omega)}
	.
	\end{multline*}
	
	(b).~%
	Let $U\in \RR^{d\times d}$ be orthogonal such that $U^T P U = D := \diag (p)$, where	$p$ is the vector containing all
	eigenvalues	of $P$. Using part~(a) twice, for $m\in\NN_0$ we now obtain that
	\begin{align*}
	\sum_{\abs{\alpha}=m} \frac{1}{\alpha!}\norm{\partial^\alpha (f\circ P)}{L^2(P^{-1}(\Omega))}^2
	&= 
	\sum_{\abs{\alpha}=m} \frac{1}{\alpha!}\norm{\partial^\alpha (f\circ (UD))}{L^2((D^{-1}U^T)(\Omega))}^2\\
	&\geq
	\frac{p_{\min}^{2m}}{\det D}\sum_{\abs{\alpha}=m} \frac{1}{\alpha!}\norm{\partial^\alpha (f\circ U)}{L^2(U^T(\Omega))}^2\\
	&=
	\frac{p_{\min}^{2m}}{\det P} \sum_{\abs{\alpha}=m} \frac{1}{\alpha!}\norm{\partial^\alpha f}{L^2(\Omega)}^2
	.\qedhere
	\end{align*}
\end{proof}%

\section{The \texorpdfstring{$d$}{d}-dimensional harmonic oscillator}
This appendix briefly revisits the $d$-dimensional harmonic oscillator and reproduces a corresponding Bernstein-type inequality
for finite linear combinations of Hermite functions from~\cite[Proposition~4.3\,(ii)]{BJPS18}, adapted to our framework.

For $\beta = (\beta_1,\dots,\beta_d) \in \NN_0^d$ let $\Phi_\beta \colon \RR^d \to \RR$ be given by
\begin{equation*}
	\Phi_\beta
	:=
	\prod_{j=1}^d \phi_{\beta_j}(x_j)
	,\quad
	x = (x_1,\dots,x_d) \in \RR^d
	,
\end{equation*}
where $\phi_k \colon \RR \to \RR$, $k \geq 0$, denotes the $k$-th standard Hermite function,
\begin{equation*}
	\phi_k(t)
	=
	\frac{(-1)^k}{\sqrt{2^k k! \sqrt{\pi}}} \ee^{t^2/2} \frac{\mathrm{d}^k}{\dd t^k}\ee^{-t^2}
	.
\end{equation*}
The family $(\Phi_\beta)_{\beta\in\NN_0^d}$ gives rise to an orthonormal basis of $L^2(\RR^d)$ consisting of eigenfunctions of
the $d$-dimensional harmonic oscillator $-\Delta_{\RR^d} + \abs{x}^2$ with
\begin{equation}\label{eq:harmonic:eigenvalues}
	(-\Delta_{\RR^d} + \abs{x}^2) \Phi_\beta
	=
	(2\abs{\beta}+d)\Phi_\beta
	.
\end{equation}
In particular, $-\Delta_{\RR^d}+\abs{x}^2$ has no spectrum below $d$.

\begin{prop}[{see~\cite[Proposition~4.3\,(ii)]{BJPS18}}]\label{prop:BJPS}
	For all $\lambda \ge d$, $\Ran\EE_{-\Delta_{\RR^d}+\abs{x}^2}(\lambda)$ is contained in $W^{\infty,2}(\RR^d)$. Moreover, given
	$\delta > 0$, every $f \in \Ran\EE_{-\Delta_{\RR^d}+\abs{x}^2}(\lambda)$ satisfies
	\begin{equation*}
		\sum_{\abs{\alpha}=m} \frac{1}{\alpha!}\norm{\partial^\alpha f}{L^2(\RR^2)}^2
		\leq 
		\frac{C_B(m,\lambda)}{m!} \norm{f}{L^2(\RR^d)}^2
		\quad\text{ for all }\
		m \in \NN_0
	\end{equation*}
	with
	\begin{equation*}
		C_B(m,\lambda)
		=
		(2\delta)^{2m} \ee^{\ee/\delta^2} (m!)^2 \ee^{2\sqrt{\lambda}/\delta}
		.
	\end{equation*}
\end{prop}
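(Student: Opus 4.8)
First I would settle the regularity claim, which is immediate: by~\eqref{eq:harmonic:eigenvalues} the space $\Ran\EE_{-\Delta_{\RR^d}+\abs{x}^2}(\lambda)$ is the finite-dimensional span of the $\Phi_\beta$ with $2\abs\beta+d\le\lambda$, and each $\Phi_\beta$ is a Schwartz function, hence lies in $W^{\infty,2}(\RR^d)$. Consequently every $f$ in this subspace is a finite linear combination $f=\sum_{2\abs\beta+d\le\lambda}c_\beta\Phi_\beta$, and, since $\Phi_\beta$ equals $\ee^{-\abs x^2/2}$ times a polynomial, $f$ extends to an entire function on $\CC^d$. For the Bernstein-type inequality I would reproduce the argument of~\cite[Proposition~4.3\,(ii)]{BJPS18}; it rests on two ingredients, which I describe in turn.

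The first ingredient is a quantitative and, crucially, \emph{dimension-free} bound on the growth of the entire extension along imaginary shifts:
\begin{equation*}
 \norm{f(\cdot+\ii y)}{L^2(\RR^d)}^2
 \le
 \ee^{\abs y^2+2\sqrt\lambda\,\abs y}\,\norm f{L^2(\RR^d)}^2
 \qquad\forall\,y\in\RR^d
 .
\end{equation*}
In one dimension this follows from the explicit identity $\norm{\phi_k(\cdot+\ii t)}{L^2(\RR)}^2=\ee^{t^2}L_k(-2t^2)$, with $L_k$ the $k$-th Laguerre polynomial, together with the elementary estimate $L_k(-s)\le\sum_{j\ge0}(ks)^j/(j!)^2\le\ee^{2\sqrt{ks}}$ for $s\ge0$ and the fact that the $k$-th one-dimensional oscillator eigenvalue is $2k+1$. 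To obtain the $d$-dimensional version without loss, I would pass to the Fourier side---where $\Ran\EE_{-\Delta_{\RR^d}+\abs x^2}(\lambda)$ is invariant, the operator $-\Delta+\abs x^2$ being Fourier-invariant---and read the left-hand side as $\norm{\ee^{-y\cdot\xi}\hat f}{L^2(\RR^d)}^2$; completing the square then reduces this to bounding the norm of the translation $Q\mapsto Q(\,\cdot-y)$ acting on polynomials of degree at most $(\lambda-d)/2$ in $L^2(\ee^{-\abs\xi^2}\dd\xi)$, which, after rotating $y$ onto a coordinate axis, becomes a one-dimensional bound of the shape $\ee^{\abs y\sqrt{\lambda-d}}$. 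The budget cancellation $(\lambda-d)/2+d/2=\lambda/2$ is what keeps the final estimate independent of $d$.

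The second ingredient converts this into bounds on the derivatives. Here I would use the identity $\sum_{\abs\alpha=m}\frac1{\alpha!}\norm{\partial^\alpha f}{L^2(\RR^d)}^2=\frac1{m!}\langle f,(-\Delta)^mf\rangle$, valid for Schwartz $f$ on $\RR^d$ (cf.\ the proof of Proposition~\ref{prop:bernstein-pureLaplacian}\,(i) and Lemma~\ref{lem:Bernstein-Criterion} with $\cM$ the Schwartz space), so that the claim amounts to $\langle f,(-\Delta)^mf\rangle\le C_B(m,\lambda)\norm f{L^2(\RR^d)}^2$. Passing to the Fourier side, $\langle f,(-\Delta)^mf\rangle=\int_{\RR^d}\abs\xi^{2m}\abs{\hat f(\xi)}^2\dd\xi$, and one may package all $m$ at once via $\sum_m\frac{\rho^m}{(m!)^2}\langle f,(-\Delta)^mf\rangle=\int_{\RR^d}I_0\bigl(2\sqrt\rho\,\abs\xi\bigr)\abs{\hat f(\xi)}^2\dd\xi$, with $I_0$ the modified Bessel function; bounding $I_0\le\exp$ and controlling the resulting exponential moment $\int_{\RR^d}\ee^{2\sqrt\rho\,\abs\xi}\abs{\hat f}^2\dd\xi$ by an estimate of the same dimension-free flavour as the growth bound, and finally choosing $\rho$ of order $1/\delta^2$, produces precisely the constant $C_B(m,\lambda)=(2\delta)^{2m}\ee^{\ee/\delta^2}(m!)^2\ee^{2\sqrt\lambda/\delta}$. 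Alternatively one may estimate $\partial^\alpha f$ directly through Cauchy's integral formula on complex polydiscs of radius proportional to $1/\delta$ and then invoke the growth bound, reorganising the factorials against $\sum_{\abs\alpha=m}1/\alpha!$.

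The step I expect to be the real obstacle is this last one: the bookkeeping must be arranged so that the combinatorics of $\sum_{\abs\alpha=m}$ assemble exactly into the $(m!)^2$ of $C_B(m,\lambda)$ and, more delicately, so that \emph{no} spurious dimension factors are introduced---naive polytorus or Cauchy--Schwarz estimates incur losses of the form $\abs y\le r\sqrt d$ or $d\cdot(\cdot)$---which is why it is essential that the growth bound and the exponential moment estimate be set up to be genuinely independent of $d$. Granting these, the remaining computations are routine, and I would cite~\cite{BJPS18} for the details.
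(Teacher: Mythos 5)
Your first paragraph (regularity) is fine and matches the paper, but the route you propose for the Bernstein inequality is not the one the paper takes, and it leaves a genuine gap at exactly the step you flag as ``the real obstacle''. The growth bound $\norm{f(\cdot+\ii y)}{L^2(\RR^d)}^2\le\ee^{\abs{y}^2+2\sqrt\lambda\abs{y}}\norm{f}{L^2(\RR^d)}^2$ controls, for each \emph{fixed} $y$, the weighted integral $\int_{\RR^d}\ee^{-2y\cdot\xi}\abs{\hat f(\xi)}^2\,\dd\xi$, whereas your Bessel-function computation requires the exponential moment $\int_{\RR^d}\ee^{2\sqrt\rho\abs{\xi}}\abs{\hat f(\xi)}^2\,\dd\xi$, i.e.\ the integral of the \emph{supremum} of those weights over $\abs{y}=\sqrt\rho$. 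Passing from the former to the latter is precisely where the naive manoeuvres you warn against (covering the sphere by finitely many directions, or replacing $\abs{\xi}$ by $\sum_j\abs{\xi_j}$ and tensorising one-dimensional bounds) reintroduce dimension-dependent losses, and your sketch supplies no mechanism for avoiding them. The Cauchy-formula alternative you mention has the same unresolved bookkeeping. So the second ingredient does not close as written, and deferring to \cite{BJPS18} does not help, because that is not how the estimate is obtained there either.

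The paper's proof is entirely elementary and uses no complex or Fourier analysis. Its two inputs are: first, the order-one estimate $\sum_{j=1}^d\norm{\partial_j f}{L^2(\RR^d)}^2=\langle f,(-\Delta_{\RR^d})f\rangle\le\langle f,(-\Delta_{\RR^d}+\abs{x}^2)f\rangle\le(2N+d)\norm{f}{L^2(\RR^d)}^2$ for $f\in\Span_\CC\{\Phi_\beta\colon\abs\beta\le N\}$, obtained by integration by parts; second, the structural fact that $\partial^\alpha f\in\Span_\CC\{\Phi_\beta\colon\abs\beta\le N+\abs\alpha\}$, since differentiation raises the Hermite degree by at most one. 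Iterating the first estimate along the second (the same induction scheme as in Lemma~\ref{lem:partial_der}) yields
\begin{equation*}
\sum_{\abs\alpha=m}\frac{1}{\alpha!}\norm{\partial^\alpha f}{L^2(\RR^d)}^2
\le
\frac{1}{m!}\norm{f}{L^2(\RR^d)}^2\prod_{k=0}^{m-1}(2N+d+2k)
\le
\frac{1}{m!}\norm{f}{L^2(\RR^d)}^2\prod_{k=0}^{m-1}(\lambda+2k),
\end{equation*}
and the product is then bounded by $C_B(m,\lambda)$ by distinguishing two regimes: if $2m-2\le\lambda$ each factor is at most $2\lambda$ and $(2\lambda)^m\le(\sqrt2\delta)^{2m}(m!)^2\ee^{2\sqrt\lambda/\delta}$ via $t^m\le m!\,\ee^t$; if $\lambda\le 2m-2$ each factor is at most $4m$ and $(4m)^m\le(2\delta)^{2m}(m!)^2\ee^{\ee/\delta^2}$. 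The dimension-free character of the constant is automatic here, since $2(N+k)+d\le\lambda+2k$ already absorbs $d$ into $\lambda$. I would strongly recommend this argument over the analytic-extension route: the only Hermite-specific fact you need is that $\partial^\alpha$ maps the spectral subspace at level $\lambda$ into the one at level $\lambda+2\abs\alpha$.
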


\begin{proof}
	By~\eqref{eq:harmonic:eigenvalues}, we have
	$\Ran\EE_{-\Delta_{\RR^d}+\abs{x}^2}(\lambda) = \Span_\CC\{ \Phi_\beta \colon \abs{\beta} \leq N \} \subset W^{\infty,2}(\RR^d)$
	with $N \in \NN_0$ such that $2N+d \le \lambda < 2N+d+2$. Let $f \in \Ran\EE_{-\Delta_{\RR^d}+\abs{x}^2}(\lambda)$. Then, $f$
	belongs to the Schwartz space, and an integration by parts (cf.~Lemma~\ref{lem:Bernstein-Criterion} and
	Remark~\ref{rmk:Bernstein}) yields
	\begin{align*}
		\sum_{j=1}^d \norm{\partial_j f}{L^2(\RR^d)}^2
		&=
		\langle f , (-\Delta_{\RR^d}) f \rangle_{L^2(\RR^d)}\\
		&\leq
		\langle f , (-\Delta_{\RR^d} + \abs{x}^2) f \rangle_{L^2(\RR^d)}
		\leq
		(2N+d)\norm{f}{L^2(\RR^d)}^2
		.
	\end{align*}
	Since also $\partial^\alpha f \in \Span_\CC\{ \Phi_\beta \colon \abs{\beta} \leq N + \abs{\alpha} \}$ for all
	$\alpha \in \NN_0^d$, as shown in~\cite{BJPS18}, a straightforward induction (cf.~the proof of Lemma~\ref{lem:partial_der})
	gives
	\begin{equation*}
		\sum_{\abs{\alpha}=m} \frac{1}{\alpha!} \norm{\partial^\alpha f}{L^2(\RR^d)}^2
		\leq
		\frac{1}{m!} \norm{f}{L^2(\RR^d)}^2 \prod_{k=0}^{m-1} (2N + d + 2k)
		\leq
		\frac{1}{m!} \norm{f}{L^2(\RR^d)}^2 \prod_{k=0}^{m-1} (\lambda + 2k)
	\end{equation*}
	for all $m \in \NN_0$; cf.~\cite[(4.38)]{BJPS18}.
	
	It remains to estimate the product in the right-hand side of the last inequality.	To this end, we proceed similarly as in the
	proof of~\cite[Proposition~4.3\,(ii)]{BJPS18}: If $2m-2 \le \lambda$, then $\lambda + 2k \leq 2\lambda$, so that
	\begin{equation*}
		\prod_{k=0}^{m-1} (\lambda + 2k)
		\leq
		(2\lambda)^m
		=
		(\sqrt{2}\delta)^{2m} \biggl( \frac{\sqrt{\lambda}}{\delta} \biggr)^{2m}
		\leq
		(\sqrt{2}\delta)^{2m} \ee^{2\sqrt{\lambda}/\delta} (m!)^2
		\leq
		C_B(m,\lambda)
		,
	\end{equation*}
	where we have used the elementary inequality $t^m \leq m! \ee^t$ for $t \geq 0$.
	
	If, on the other hand, $\lambda \leq 2m - 2$, then $\lambda + 2k \le 4m$, and we obtain in a similar way
	\begin{equation*}
		\prod_{k=0}^{m-1} (\lambda + 2k)
		\leq
		(4m)^m
		\leq
		(2\delta)^{2m} m! \Bigl( \frac{\ee}{\delta^2} \Bigr)^m
		\leq
		(2\delta)^{2m} (m!)^2 \ee^{\ee/\delta^2}
		\leq
		C_B(m,\lambda)
		,
	\end{equation*}
	which completes the proof.
\end{proof}%

\section{Choosing suitable coverings and proof of Lemma~\ref{lem:covering}}\label{sec:cov}

We here discuss how to construct suitable coverings of various types of domains in $\RR^d$. The parameters of the respective
coverings shall not depend on the scale of the domain as long as the latter contains an open hypercube with sides of length
$\varrho$ parallel to coordinate axes. Recall that we are looking only for essential coverings in the sense that we only need to
cover the domain up to a set of measure zero. In some cases, this allows to avoid any overlap between the covering elements.
However, at the cost of maybe some overlap, we give priority to small side length $l$ of the hyperrectangles
surrounding the covering elements. With regard to Theorem~\ref{thm:LS}, this is a sound strategy because the overlap $\kappa$
enters the final estimate only logarithmically in the exponent.

We begin with the following elementary observation that allows us to combine coverings of two domains to a corresponding covering
of their Cartesian product.

\begin{lemma}\label{lem:Cartesian}
	If $\{Q_j\}_{j\in J}$ is a $(\kappa,\varrho,l,\eta)$-covering of a domain $\Omega \subset \RR^d$ and $\{Q_{j'}'\}_{j'\in J'}$ is
	a	$(\kappa',\varrho,l',\eta')$-covering of a domain $\Omega' \subset \RR^{d'}$, then the family
	$\{ Q_j \times Q_{j'}'\}_{(j,j')\in J\times J'}$ yields a $(\kappa\kappa',\varrho,(l,l'),\tilde{\eta})$-covering of
	$\Omega \times \Omega' \subset \RR^{d+d'}$ with
	\begin{equation*}
	\tilde{\eta}
	=
	\eta\eta'\,\frac{(d')^{d'/2} \cdot d^{d/2}}{(d+d')^{(d+d')/2}}
	.
	\end{equation*}
	In particular, if $\eta \ge c/d^{d/2}$ and $\eta' \ge c'/(d')^{d'/2}$, then $\tilde{\eta} \ge cc'/((d+d')^{(d+d')/2})$.
\end{lemma}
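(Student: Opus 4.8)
The plan is to verify the four defining properties of a $(\kappa\kappa',\varrho,(l,l'),\tilde\eta)$-covering for the product family $\{Q_j\times Q_{j'}'\}_{(j,j')\in J\times J'}$ directly from the corresponding properties of the two given coverings. Properties~\eqref{it:covering} and~\eqref{it:geometry} are essentially immediate: the complement of $\bigcup_{j,j'} Q_j\times Q_{j'}'$ in $\Omega\times\Omega'$ is contained in $\bigl((\Omega\setminus\bigcup_j Q_j)\times\Omega'\bigr)\cup\bigl(\Omega\times(\Omega'\setminus\bigcup_{j'} Q_{j'}')\bigr)$, which has measure zero by Fubini; and a product of a $\varrho$-hypercube with a $\varrho$-hypercube is a $\varrho$-hypercube in the higher dimension, while a product of the surrounding hyperrectangles is the surrounding hyperrectangle with side-length vector $(l,l')$. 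For property~\eqref{it:overlap}, I would apply Fubini/Tonelli to write $\sum_{j,j'}\norm{g}{L^2(Q_j\times Q_{j'}')}^2$ as an iterated integral, use the overlap bound for $\{Q_{j'}'\}$ in the $\Omega'$-variable for fixed first variable, then the overlap bound for $\{Q_j\}$ in the $\Omega$-variable, obtaining the factor $\kappa\kappa'$.

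The only genuinely substantive point is property~\eqref{it:bijection}, i.e.\ producing for each product cell $Q_j\times Q_{j'}'$ a linear bijection of $\RR^{d+d'}$ realising the volume-to-diameter-power ratio $\tilde\eta$. The natural choice is the block-diagonal map $\bijection_j\oplus\bijection_{j'}'$, so that $(\bijection_j\oplus\bijection_{j'}')(Q_j\times Q_{j'}')=\bijection_j(Q_j)\times\bijection_{j'}'(Q_{j'}')$. Then $\abs{(\bijection_j\oplus\bijection_{j'}')(Q_j\times Q_{j'}')}=\abs{\bijection_j(Q_j)}\cdot\abs{\bijection_{j'}'(Q_{j'}')}\ge \eta\,\diam(\bijection_j(Q_j))^d\cdot\eta'\,\diam(\bijection_{j'}'(Q_{j'}'))^{d'}$. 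It remains to relate the product of these diameters to $\diam$ of the product set in $\RR^{d+d'}$. Using that $\diam$ of a Cartesian product $A\times B$ satisfies $\diam(A\times B)^2=\diam(A)^2+\diam(B)^2$, and that for fixed sum the product of $\diam(\bijection_j(Q_j))^d$ and $\diam(\bijection_{j'}'(Q_{j'}'))^{d'}$ is minimised (for the purposes of a lower bound on the cell volume we need a \emph{lower} bound on $\diam(\bijection_j(Q_j))^d\diam(\bijection_{j'}'(Q_{j'}'))^{d'}$ divided by $\diam$ of the product to the power $d+d'$, hence an \emph{upper} bound on the ratio $\diam(\text{product})^{d+d'}/(\diam(\bijection_j(Q_j))^d\diam(\bijection_{j'}'(Q_{j'}'))^{d'})$). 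Setting $a=\diam(\bijection_j(Q_j))^2$ and $b=\diam(\bijection_{j'}'(Q_{j'}'))^2$, one needs the elementary inequality
\begin{equation*}
 \frac{(a+b)^{(d+d')/2}}{a^{d/2}\,b^{d'/2}}
 \le
 \frac{(d+d')^{(d+d')/2}}{d^{d/2}\,(d')^{d'/2}}
 ,
\end{equation*}
which is exactly the statement that $s\mapsto s^{(d+d')/2}$ evaluated at $a+b$, normalised by the monomial $a^{d/2}b^{d'/2}$, is maximised at $a/d=b/d'$; this follows from a one-variable calculus argument or directly from the weighted AM--GM inequality applied to $a=d\cdot\tfrac ad$ and $b=d'\cdot\tfrac b{d'}$. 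Rearranging gives $\diam(\bijection_j(Q_j))^d\diam(\bijection_{j'}'(Q_{j'}'))^{d'}\ge \bigl(d^{d/2}(d')^{d'/2}/(d+d')^{(d+d')/2}\bigr)\diam(\text{product})^{d+d'}$, and combining with the volume bound yields precisely the claimed $\tilde\eta$.

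I expect the main obstacle to be exactly this last elementary inequality about diameters of Cartesian products and the attendant bookkeeping: one must be careful about the direction of the inequality (we want a worst-case bound on $\tilde\eta$, so we need the product of diameters to be at least a fixed fraction of the product-diameter to the power $d+d'$), and about the fact that $\diam(A\times B)^2=\diam(A)^2+\diam(B)^2$ holds with the Euclidean metric. Everything else — the measure-zero complement, the geometric containments, and the overlap estimate via Fubini — is routine. Finally, the last sentence of the lemma is immediate: substituting $\eta\ge c/d^{d/2}$ and $\eta'\ge c'/(d')^{d'/2}$ into the formula for $\tilde\eta$ gives $\tilde\eta\ge cc'/(d+d')^{(d+d')/2}$ after cancellation of the $d^{d/2}$ and $(d')^{d'/2}$ factors.
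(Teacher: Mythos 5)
There is a genuine gap in the treatment of property~\eqref{it:bijection}: the elementary inequality you invoke is stated in the wrong direction and is in fact false. Setting $a=\diam(\bijection_j(Q_j))^2$ and $b=\diam(\bijection_{j'}'(Q_{j'}'))^2$, the weighted AM--GM inequality applied to $a+b=d\cdot\tfrac{a}{d}+d'\cdot\tfrac{b}{d'}$ gives
\begin{equation*}
	\frac{(a+b)^{(d+d')/2}}{a^{d/2}\,b^{d'/2}}
	\;\ge\;
	\frac{(d+d')^{(d+d')/2}}{d^{d/2}\,(d')^{d'/2}},
\end{equation*}
i.e.\ the homogeneous ratio $a^{d/2}b^{d'/2}/(a+b)^{(d+d')/2}$ is \emph{maximised} at $a/d=b/d'$ and tends to $0$ as $a/b\to 0$ or $a/b\to\infty$. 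Consequently there is no uniform positive lower bound of the kind you need, and the plain block-diagonal map $\bijection_j\oplus\bijection_{j'}'$ does not yield the claimed $\tilde\eta$: if the two image diameters are very disparate, the volume-to-$\diam^{d+d'}$ ratio of the product can be arbitrarily small.

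The fix is to use the freedom in choosing the bijection to force the equality case of the AM--GM. This is what the paper does: it takes
\begin{equation*}
	\bijection_{j,j'}(x,y)
	=
	\bigl( r\,\bijection_j(x)/\diam(\bijection_j(Q_j)) \,,\, \bijection'_{j'}(y)/\diam(\bijection'_{j'}(Q'_{j'})) \bigr),
	\qquad r^2 = d/d',
\end{equation*}
so that the two block images have diameters exactly $r$ and $1$, i.e.\ $a=d/d'$ and $b=1$ satisfy $a/d=b/d'$. Then $\abs{\bijection_{j,j'}(Q_j\times Q'_{j'})}\ge\eta\eta' r^d$ and $\diam(\bijection_{j,j'}(Q_j\times Q'_{j'}))^2=1+r^2$, and a direct computation gives precisely the stated $\tilde\eta$. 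The rest of your argument (measure-zero complement, the Fubini argument for the overlap constant $\kappa\kappa'$, and the product of the enclosing hypercubes and hyperrectangles) is correct and matches the paper, which indeed treats only the parameter $\tilde\eta$ explicitly.
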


\begin{proof}
	Only the parameter $\tilde{\eta}$ needs explicit treatment. Here, we consider the linear bijection
	$\bijection_{j,j'} \colon \RR^{d} \times \RR^{d'} \to \RR^{d} \times \RR^{d'}$ with
	\begin{equation*} 
	\bijection_{j,j'}(x,y)
	=
	\bigl( r\bijection_j(x)/\diam(\bijection_j(Q_j)),\bijection'_{j'}(y)/\diam(\bijection'_{j'}(Q'_{j'})) \bigr)
	,\quad
	r^2 = d/d',
	\end{equation*}
	where $\bijection_j$ and $\bijection'_{j'}$ are the bijections corresponding to $Q_j$ and $Q'_{j'}$, respectively. Then,
	$\abs{\bijection_{j,j'}(Q_j \times Q'_{j'})} \ge \eta\eta'r^d$ and $\diam(\bijection_{j,j'}(Q_j \times Q'_{j'}))^2 = 1 + r^2$,
	with which it is easily verified that
	\begin{equation*}
	\abs{\bijection_{j,j'}(Q_j \times Q'_{j'})}
	\geq
	\tilde{\eta} \diam(\bijection_{j,j'}(Q_j \times Q'_{j'}))^{d+d'}
	.\qedhere
	\end{equation*}
\end{proof}%

\subsection{Generalised rectangles}\label{subsec:genRec}

The above lemma allows to reduce the construction of coverings of generalised rectangles $\Omega = \bigtimes_{j=1}^d (a_j,b_j)$
with $\varrho \le b_j - a_j$ for all $j$ to the one-dimensional case, that is, open intervals $\Omega \subset \RR$. Here, we may
just cover $\Omega$ with open intervals of length $\varrho$. This can be done in an adjacent manner without overlap if $\Omega$
is unbounded and with an overlap of at most two intervals otherwise. In any case, we end up with a
$(2,\varrho,\varrho,1)$-covering of $\Omega \subset \RR$.

Combining the above one-dimensional consideration with Lemma~\ref{lem:Cartesian}, we conclude
that given $\varrho>0$, every generalised rectangle with $\varrho\geq b_j-a_j$ for all $j$ possesses a 
$(2^k,\varrho,l,d^{-d/2})$-covering with $l = (\varrho,\dots,\varrho) \in (0,\infty)^d$ and $k$ the
number of its bounded coordinates. In particular, this covering is a $(2^d,\varrho,l,d^{-d/2})$-covering.

\subsection{Sectors}\label{subsec:sectors}

For $0 < \theta < \pi/2$, let $\Omega = S_\theta \subset \RR^2$ denote the sector
\begin{equation*}
S_\theta = \bigl\{ (x_1,x_2) \in (0,\infty)^2 \mid 0 < x_2 < x_1\tan\theta \bigr\}.
\end{equation*}
In order to deal with the cusp of this sector, we consider the covering element
$Q:= S_\theta \cap \big((0,q)\times (0,\varrho)\big)$ with $q=\varrho(1+\cot\theta)$, which is just large enough to contain a
square of side length $\varrho$. Moreover, choosing the linear bijection $\bijection \colon \RR^2 \to \RR^2$ to map the rectangle
$(0,q)\times(0,\varrho)$ onto the square $(0,\varrho)^2$, we easily see that
\begin{equation*}
	\frac{\abs{\bijection(Q)}}{\diam(\bijection(Q))^2}
	\geq
	\frac{1}{4}
	.
\end{equation*}
We now cover $S_\theta$ without overlap with squares of side length $\varrho$ and translates of $Q$ as depicted in
Figure~\ref{fig:sectorsandtriangles}, which results in a $(1,\varrho,(q,\varrho),1/4)$-covering of $S_\theta$ with
$q = \varrho(1+\cot\theta)$. Here, it is worth to note that $q$ gets large as the opening angle $\theta$ gets small.

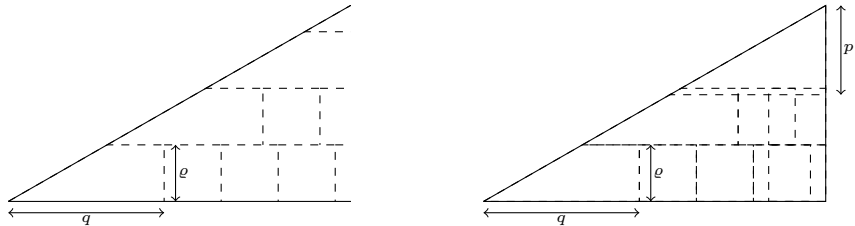
\begin{figure}[ht]
	\begin{tikzpicture}[scale=0.5]
	\pgfmathsetmacro{\a}{30}; 
	\pgfmathsetmacro{\t}{tan(\a)}; 
	\pgfmathsetmacro{\r}{1.5} 
	\begin{scope}[xshift=-12.5cm]
		\clip (0,-1) rectangle (9, 9*\t);
	\draw (0,0) -- (11,0); 
	\draw (0,0) -- (11, 11*\t);
	\foreach \y in {0,\r, 2*\r, 3*\r}{
		\draw[dashed] (\y/\t,\y) -- (\r/\t+\y/\t, \r+\y) -- (\r+\r/\t+\y/\t,\r+\y) -- (\r+\r/\t+\y/\t,0+\y);
		\foreach \x in {0,\r, 2*\r, 3*\r}{
			\draw[dashed] (\x+\r+\r/\t+\y/\t, \r+\y) -- (\x+2*\r+\r/\t+\y/\t,\y+\r) --(\x+2*\r+\r/\t+\y/\t, \y);
		}
	}
	\draw[<->] (0,-0.3) -- (\r+\r/\t, -0.3); \node at (0.5*\r+ 0.5*\r/\t, -0.5) {\tiny $q$};
	\draw[<->] (\r+\r/\t+0.3, 0) -- (\r+\r/\t+0.3, \r); \node at (\r+\r/\t+0.5, \r/2) {\tiny$\varrho$};
	\end{scope}

	\begin{scope}
	\draw (0,0) -- (9,0) -- (9,9*\t) -- (0,0);
	\draw[dashed] (9, 9*\t) -- (9, 9*\t -\r - \r*\t) -- (9-\r-\r/\t,9*\t -\r -\r*\t) -- (9, 9*\t);
	\foreach \y in {0,\r}{
		\draw[dashed] (\y/\t,\y) -- (\r/\t+\y/\t, \r+\y) -- (\r+\r/\t+\y/\t,\r+\y) -- (\r+\r/\t+\y/\t,0+\y) -- (\y/\t,\y);
	}
	\draw[dashed] (\r+\r/\t,\r) rectangle (\r+\r+\r/\t,0); 
	\draw[dashed] (\r+\r+\r/\t,\r) rectangle (\r+\r+\r+\r/\t,0);
	\draw[dashed] (\r+\r+\r+\r/\t,\r) rectangle (\r+\r+\r+\r+\r/\t,0);
	\draw[dashed] (9-\r, \r) rectangle (9,0);
	
	\draw[dashed] (9-\r, \r+\r) rectangle (9,\r);
	\draw[dashed] (\r+\r/\t+\r/\t,\r+\r) rectangle (\r+\r+\r/\t+\r/\t,\r);
	 
	\draw[<->] (9.4, 9*\t) -- (9.4, 9*\t -\r - \r*\t); \node at (9.6, 7*\t) {\tiny $p$};
	\draw[<->] (0,-0.3) -- (\r+\r/\t, -0.3); \node at (0.5*\r+ 0.5*\r/\t, -0.5) {\tiny $q$};
	\draw[<->] (\r+\r/\t+0.3, 0) -- (\r+\r/\t+0.3, \r); \node at (\r+\r/\t+0.5, \r/2) {\tiny$\varrho$};
	\end{scope}
	\end{tikzpicture}
	\caption{On the left a covering of the sector $S_\theta$, on the right an adapted one of the triangle $\cR_{\theta,L}$ with
	$L \ge q = \varrho(1+\cot\theta)$.}
	\label{fig:sectorsandtriangles}
\end{figure}

\subsection{Right-angled triangles}\label{subsec:righttriangles}
For $0 < \theta < \pi/2$ and $L>0$ consider the right-angled triangle $\cR_{\theta,L} := S_\theta \cap ((0,L) \times (0,\infty))$. 
The considerations for the sector $S_\theta$ above show that we have to require $L \ge q = \varrho(1+\cot\theta)$ in order to
ensure that $\cR_{\theta,L}$ contains a square with sides of length $\varrho$ parallel to coordinate axes. In addition to such
squares and the element $Q$ discussed for the sector, we now cover $\cR_{\theta,L}$ also with a translate of
$\cR := \cR_{\theta,q}= S_\theta\cap ((0,q) \times (0,\infty))$, which is half of the rectangle $(0,q) \times (0,p)$ with
$p := \varrho(1+\tan\theta)$, in order to deal with the upper cusp of $\cR_{\theta,L}$. The linear bijection
$\bijection \colon \RR^2 \to \RR^2$ mapping the rectangle $(0,q) \times (0,p)$ to the square $(0,1)^2$ clearly satisfies
\begin{equation*}
	\frac{\abs{\bijection(\cR)}}{\diam(\bijection(\cR))^2}
	=
	\frac{1}{4}
	.
\end{equation*}

Upon covering $\cR_{\theta,L}$ with the above elements as depicted in Figure~\ref{fig:sectorsandtriangles}, we see that no more
than two of those squares and/or the translate of $\cR$ participate in an overlap, so that we have a total overlap of at most
three of those elements. Hence, for $L \geq \varrho(1+\cot\theta)$, this leads to a (finite) $(3,\varrho,(q,p),1/4)$-covering of
$\cR_{\theta,L}$ with $q = \varrho(1+\cot\theta)$ and $p = \varrho(1+\tan\theta)$.

\subsection{Equilateral triangles}\label{subsec:equilateral}

For $L > 0$ consider the equilateral triangle $\cT_L \subset \RR^2$ with vertices $(L/2,-\sqrt{3}L/6)$, $(0,\sqrt{3}L/3)$, and
$(-L/2,-\sqrt{3}L/6)$. It has sides of length $L$ and the origin as centre of mass.
Clearly, $\cT_L$ contains a square with sides of length $\varrho$ parallel to coordinate axes if and only if
$L \ge \sqrt{3}\varrho$. Denote $\cT := \cT_{\sqrt{3}\varrho}$, and let $\cT'$ denote the rotation of $\cT$ 
by angle $\pi$ around the origin. 
Both $\cT$ and $\cT'$ lie in a rectangle with sides of length $l=(\sqrt{3}\varrho,3\varrho/2)$ 
parallel to
coordinate axes. Clearly,
\begin{equation*}
\frac{\abs{\cT}}{\diam(\cT)^2}
=
\frac{\abs{\cT'}}{\diam(\cT')^2}
=
\frac{\sqrt{3}}{4}
.
\end{equation*}

We now cover $\cT_L$ with translates of $\cT$ and $\cT'$ as depicted in Figure~\ref{fig:equilateral}.
Here, a large portion of $\cT_L$, namely a translate of $\cT_{\sqrt{3}k\varrho}$ with
$\sqrt{3}k\varrho \le L < \sqrt{3}(k+1)\varrho$, $k \in \NN$, can be covered this way without any overlap. For the remaining part
of $\cT_L$, we can guarantee that no more than three different elements participate in an overlap, so that for
$L \geq \sqrt{3}\varrho$ we end up with a (finite) $(3,\varrho,l,\sqrt{3}/4)$-covering of $\cT_L$ with
$l = (\sqrt{3}\varrho,3\varrho/2)$.

\begin{figure}[ht]
	\begin{tikzpicture}[scale=0.5]
	\begin{scope}
	\pgfmathsetmacro{\a}{30};
	\pgfmathsetmacro{\t}{tan(30)};
	\pgfmathsetmacro{\ct}{1/tan(30)};
	\pgfmathsetmacro{\r}{2};
	\pgfmathsetmacro{\L}{3.5*\ct*\r};
	\pgfmathsetmacro{\R}{\L/(\ct*\r)-1.5};
	\draw (-\L/2, -\ct*\L/6) --(\L/2, -\ct*\L/6) --(0,\ct*\L/3) -- (-\L/2, -\ct*\L/6); 
	
	\foreach \y in {0,...,\R}{
		\pgfmathsetmacro{\P}{\R-\y};
		\foreach \x in {0,...,\P}{
		\draw[dashed] (-\L/2+\x*\ct*\r+\y*\ct*\r/2, -\ct*\L/6+\ct*\y*\ct*\r/2) -- (-\L/2+\ct*\r+\x*\ct*\r+\y*\ct*\r/2, -\ct*\L/6+\ct*\y*\ct*\r/2) -- (-\L/2+\ct*\r+\x*\ct*\r-\ct*\r/2+\y*\ct*\r/2, -\ct*\L/6+\ct*\ct*\r/2+\ct*\y*\ct*\r/2) -- 
		(-\L/2+\x*\ct*\r+\y*\ct*\r/2, -\ct*\L/6+\ct*\y*\ct*\r/2);
		}
	}
	\draw[dashed] (\L/2, -\ct*\L/6) -- (\L/2-\ct*\r, -\ct*\L/6) -- (\L/2-\ct*\r/2, -\ct*\L/6+\ct*\ct*\r/2) -- (\L/2, -\ct*\L/6);
	\draw[dashed] (\L/2-\ct*\r/2, -\ct*\L/6+\ct*\ct*\r/2) -- (\L/2-\ct*\r/2-\ct*\r, -\ct*\L/6+\ct*\ct*\r/2) -- (\L/2-\ct*\r, -\ct*\L/6);
	
	\draw[dashed] (\L/2-\ct*\r/2, -\ct*\L/6+\ct*\ct*\r/2) -- (\L/2-\ct*\r/2-\ct*\r/2, -\ct*\L/6+\ct*\ct*\r/2+\ct*\ct*\r/2) -- (\L/2-\ct*\r/2-\ct*\r, -\ct*\L/6+\ct*\ct*\r/2);
	\draw[dashed] (\L/2-\ct*\r/2-\ct*\r/2, -\ct*\L/6+\ct*\ct*\r/2+\ct*\ct*\r/2) -- (\L/2-\ct*\r/2-\ct*\r/2-\ct*\r, -\ct*\L/6+\ct*\ct*\r/2+\ct*\ct*\r/2) -- (\L/2-\ct*\r/2-\ct*\r, -\ct*\L/6+\ct*\ct*\r/2);
	\draw[dashed]  (\L/2-\ct*\r/2-\ct*\r/2, -\ct*\L/6+\ct*\ct*\r/2+\ct*\ct*\r/2) -- (\L/2-\ct*\r/2-\ct*\r/2-\ct*\r, -\ct*\L/6+\ct*\ct*\r/2+\ct*\ct*\r/2) -- (\L/2-\ct*\r/2-\ct*\r, -\ct*\L/6+3*\ct*\ct*\r/2) -- (\L/2-\ct*\r/2-\ct*\r/2, -\ct*\L/6+\ct*\ct*\r/2+\ct*\ct*\r/2);
	\draw[dashed] (0,\ct*\L/3) -- (\ct*\r/2, \ct*\L/3-\ct*\ct*\r/2) -- (\ct*\r/2-\ct*\r, \ct*\L/3-\ct*\ct*\r/2) -- (0,\ct*\L/3);
	\draw[<->] (-\L/2,-\ct*\L/6-0.3) -- (\L/2, -\ct*\L/6-0.3); \node at (0, -\ct*\L/6-0.7) {\tiny $L$};
	\draw[<->] (-\L/2-0.25,-\ct*\L/6+0.15) -- (-\L/2+\ct*\r/2-0.25, -\ct*\L/6+\ct*\ct*\r/2+0.15); \node at (-\L/2+\ct*\r/4-0.9, -\ct*\L/6+\ct*\ct*\r/4+0.25) {\tiny $\sqrt{3}\varrho$};
	\end{scope}
	\end{tikzpicture}
	\caption{A covering of the	equilateral triangle $\cT_L$ with $L \geq 3\sqrt{3}\varrho$.}\label{fig:equilateral}
\end{figure}
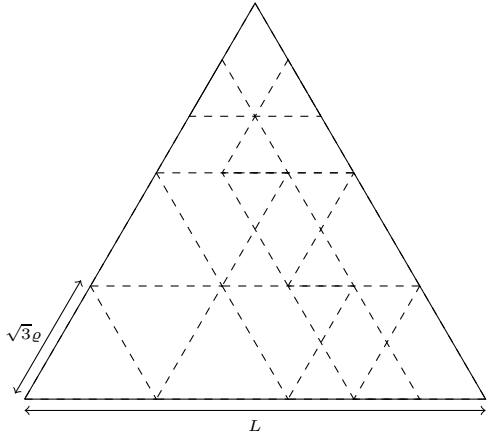

\subsection{Proof of Lemma~\ref{lem:covering}}

The statement of Lemma~\ref{lem:covering} is clearly correct for generalised rectangles by Subsection~\ref{subsec:genRec}, and
for the two dimensional domains in question we have above constructed coverings that are, in particular,
$(4,\varrho,l,1/4)$-coverings with the respective parameters $l \in [\varrho,\infty)^2$. Here, the isosceles right-angled
triangles are represented by $\cR_{\theta,L}$ with $\theta = \pi/4$, whereas the hemiequilateral triangles correspond to
$\cR_{\theta,L}$ with $\theta = \pi/3$. This means that in each case the entries of $l$ can be bounded by multiples of $\varrho$,
which depend at most on the opening angle $\theta$ if a sector is considered (the smaller $\theta$, the larger the multiple).
Hence, the claim of Lemma~\ref{lem:covering} holds also for the stated two dimensional domains. Since there are at most $d/2$
factors of dimension $2$ in a Cartesian product in $\RR^d$, the claim for Cartesian products of the above domains now follows
from Lemma~\ref{lem:Cartesian}. This completes the proof of Lemma~\ref{lem:covering}.

\section*{Acknowledgements}
The authors are indebted to Ivan Veseli\'c for introducing them to this field of research. They also thank the anonymous reviewer
for helpful remarks on the manuscript.

\end{document}